\newcommand{\E}{\ensuremath{\mathbb{E}}}
\newcommand{\Prob}{\ensuremath{\mathbb{P}}}
\newcommand{\C}{\ensuremath{\mathbb{C}}}
\newcommand{\R}{\ensuremath{\mathbb{R}}}
\newcommand{\N}{\ensuremath{\mathbb{N}}}
\newcommand{\Z}{\ensuremath{\mathbb{Z}}}
\newcommand{\bo}{\ensuremath{\mathrm{O}}}
\newcommand{\Ind}{\ensuremath{\textbf{1}}}
\DeclareMathOperator*{\argmax}{arg\,max}
\newcommand{\asec}{\ensuremath{\alpha^{\mathrm{sec}}}}
\newcommand{\amax}{\ensuremath{\alpha^{\mathrm{max}}}}
\newcommand{\Supp}{\ensuremath{\mathrm{Supp}}}
\renewcommand{\Re}{\ensuremath{\mathfrak{Re}}}
\renewcommand{\Im}{\ensuremath{\mathfrak{Im}}}
\newtheorem{thm}{Theorem}[section]
\newtheorem{prop}[thm]{Proposition}
\newtheorem{lem}[thm]{Lemma}
\newtheorem{rem}[thm]{Remark}
\theoremstyle{definition}
\newtheorem{defn}[thm]{Definition}
\begin{document}
\title[Densities for Stochastic Fixed Points]{On Densities for Solutions to Stochastic Fixed Point Equations}

\author{Kevin Leckey}
\thanks{Research fellow at TU Dortmund, Germany. Contact: kevin.leckey@tu-dortmund.de. This article was written during a research fellowship at Monash University, Australia.}

\begin{abstract}
We consider systems of stochastic fixed point equations that arise in the asymptotic analysis of random recursive structures and algorithms such as Quicksort, large P{\'o}lya urn processes, and path lengths of random recursive trees and split trees. 
The main result states sufficient conditions on the fixed point equations that imply the existence of bounded, smooth, rapidly decreasing Lebesgue densities.
\end{abstract}

\maketitle

\smallskip
\noindent
{\bf  MSC2010:} Primary 60E05, 60E10; secondary 60G30, 60F05, 68Q87.\\
{\bf Keywords: }{\it stochastic fixed point equation; probability density function; Schwartz space;  P{\'o}lya urn; split tree.}  


\section{Introduction}

The contraction method is an approach to derive limit theorems for a broad class of random recursive structures and algorithms. It was introduced by R{\"o}sler \citep{Roe91} in 1991 for the distributional analysis of the complexity of  Quicksort. 
Over the last 25 years this approach has been extended to a variety of random variables with underlying recursive structures. Some examples are:  recursive algorithms \citep{Roe01,Ne01,NeRue04,RaRue95}, data structures \citep{LNS12,NeRue04,NeSu15},  P{\'o}lya urn models \citep{KnNe14,MueNe15}, and random tree models \citep{BrouHolm,LeNe13}.

Limit distributions derived by the contraction method are given implicitly as solutions to stochastic fixed point equations. A stochastic fixed point equation is an equation $\mu=T(\mu)$, where $\mu\in\mathcal{M}$ and $T:\mathcal{M}\rightarrow\mathcal{M}$  for some set $\mathcal{M}$ of probability distributions. A random variable $X$ is called solution to the stochastic fixed point
equation $\mu=T(\mu)$ if its distribution $\mu$ is a fixed point of $T$. In many cases an explicit description of such a solution (e.g.~in terms of its distribution function) is unknown. 
In particular a lot of limits in P\'olya urn models are not known explicitly and thus 
any further properties of these limits need to be derived from their stochastic fixed point equations.

The aim of this paper is a better understanding of solutions to stochastic fixed point equations. 
We will discuss several examples in which the fixed point equation implies that the limit distribution is 'smooth' in the sense that it has an infinitely differentiable, rapidly decreasing Lebesgue density. 

The stochastic fixed point equations in this paper are of the following type: Let $X_1,\ldots,X_m$ be $\R^d$-valued random variables for some $m\geq 1$ and $d\geq 1$. Here and subsequently, $\R^d$ denotes the set of {\it column} vectors of dimension $d$. The distributions of $X_1,\ldots,X_m$ are given implicitly as solutions to
 \begin{align}\label{main_rec_intro}
 X_r\stackrel{d}{=} \sum_{j=1}^{\infty} A_{r,j} X_{\ell_r(j)}^{(j)}+b_r,\quad r\in [m]:=\{1,\ldots,m\},
\end{align}
where $\stackrel{d}{=}$ denotes equality in distribution, and:  
\begin{itemize}
\item $\ell_r:\N\rightarrow [m]$ is some given function;
\item $X_{\ell_r(j)}^{(j)}$ has the same distribution as $X_{\ell_r(j)}$;
\item $A_{r,j}$ is a random $d\times d$ matrix and $b_r$ is a $\R^d$-valued random variable;
\item $X_{\ell_r(1)}^{(1)},X_{\ell_r(2)}^{(2)},\ldots$ and $((A_{r,j})_{j\geq 1},b_r)$ are independent.
\end{itemize}
The infinite sum in \eqref{main_rec_intro} denotes the a.s.~limit of the partial sums, thus we assume implicitly that the sequences
\begin{align*}
\left(\sum_{j=1}^{n} A_{r,j} X_{\ell_r(j)}^{(j)}\right)_{n\geq 1}
\end{align*}
converge almost surely as $n\rightarrow\infty$ for every $r\in[m]$. 

The main result (\cref{thm_A_conds}) states sufficient conditions on $((A_{r,j})_{j\geq 1},b_r)$ for the existence of bounded (Lebesgue-) densities for the distributions of $X_1,\ldots,X_m$. These densities are shown to be smooth (i.e.~all derivatives exist). Moreover, they are Schwartz functions if all moments of $X_1,\ldots,X_m$ are finite ($f$ is a Schwartz function if $f(t)$ and all its derivatives decay faster than any polynomial in $\|t\|^{-1}$). 

The methods in this article are based on known results for branching processes \cite{liu99}, Quicksort \cite{fiJa00} and P\'olya urns \cite{ChMaPou13,Ma15}. Aside from improving some of these results, we manage to provide a general framework that covers other examples such as limit laws in several random tree models and multivariate limit laws. In particular, most of the previous results only studied one dimensional ($d=1$) cases with one equation ($m=1$). The only notable exception are the results on P\'olya urns \cite{ChMaPou13,Ma15}, which dealt with more than one equation of possibly complex valued random variables. However, the methods used for P\'olya urns \cite{ChMaPou13,Ma15} were not as powerful as the ones for Quicksort \cite{fiJa00} in the sense that they could not prove that the limit densities are Schwartz functions.

There are several reasons to derive properties of distributions from their fixed point equations. First of all we want to provide a better understanding of stochastic fixed point equations in general since
they appear naturally in various areas such as recursive algorithms, random trees and P{\'o}lya urns (more details are given in the next paragraph).  The second reason is to reduce redundancy in future works: The existence (and sometimes smoothness) of densities has been shown for some examples \cite{liu99,fiJa00,ChMaPou13,Ma15} using similar methods. This article not only merges those approaches but also covers a variety of other limits, some of them presented in \cref{sec:motivation}.
The last reason is connected to the contraction method itself\footnote{A prominent example of a limit theorem derived by this method is Quicksort \cite{Roe91}}. In the one dimensional case ($d=1$) the existence of a bounded density is useful to derive rates of convergence:
The contraction method deals with distances 
in a rather abstract metric, often in the so-called Wasserstein metric $\ell_p$ (cf., e.g., \cite{Roe91}).
Usually it is not hard to extract a rate of convergence in this abstract metric, that is an upper bounds on $\ell_p(Y_n,Y)$ for a sequence $(Y_n)_{n\geq 1}$ that converges to $Y$. 
Now suppose $Y_n$ and $Y$ are real valued and let $F_n$ denote the distributional function of $Y_n$ and let $F$ denote the distributional function of $Y$. Assume that $Y$ admits a bounded density $f_Y$. Then one can show that
\begin{align*}
\|F_n-F\|_\infty \leq \left( (p+1)\| f_Y\|_\infty^p\right)^{\frac 1 {1+p}} \left(\ell_p(Y_n,Y)\right)^{\frac p {1+p}},\qquad p\geq 1.
\end{align*}
Thus, the existence of a density for $Y$ is crucial to transfer a rate of convergence of $(\ell_p(Y_n,Y))_{n\geq 1}$ to an upper bound on $(\|F_n-F\|_\infty)_{n\geq 1}$.

We continue the introduction with some examples where equations like \eqref{main_rec_intro} appear. More details on these examples are given in \cref{sec:motivation}. 
\begin{itemize}
\item{ {\bf Branching processes.} Limit distributions in several branching processes \citep{liu01} can be 
characterized by an equation of type \eqref{main_rec_intro} with: 
\begin{align*}
m=1,\quad d=1,\quad (A_{1,j})_{j\geq 1}= ( A_{1,j}^\prime \Ind_{\{N\geq j\}})_{j\geq 1},\quad b_1=0,
\end{align*}
where $(N, A_{1,1}^\prime,A_{1,2}^\prime,\ldots)$ is a random variable in $\N_0\times (0,\infty)\times (0,\infty)\times\ldots$ 

In a supercritical Galton Watson process $(Z_n)_{n\geq 0}$, for example, let $N$ be the offspring distribution and $\mu=\E[N]$. Then $(Z_n/\mu^n)_{n\geq 0}$ converges to a limit that solves this kind of distributional equation with $A_{1,j}^\prime=1/\mu$. Note that the limit distribution has an atom in $0$ unless $\Prob(N=0)=0$. Hence it cannot be absolutely continuous on its entire support.

If $\Prob(N=0)=0$, then Liu \cite{liu01} states sufficient conditions on $(N, A_{1,1}^\prime,A_{1,2}^\prime,\ldots)$ for the existence of a density for $X_1$.  
We extend this approach to obtain similar results for the general equation \eqref{main_rec_intro} (see \cref{lem_conv_zero} and \cref{thm_poly_decay}). Although these result are insufficient to obtain smooth densities for the other examples listed below, they provide a basis for inductively gaining better bounds on the characteristic functions of $X_1,\ldots,X_m$. These bounds finally lead to a result (\cref{thm_A_conds}) that implies the existence of infinitely differentiable densities for all examples below.

We will not discuss branching processes in this paper although it improves Liu's result \cite[Corollary (Absolute continuity I)]{liu01} in some situations where the largest element
among $\{|A_{1,1}^\prime|,\ldots, | A_{1,N}^\prime| \}$ is bounded from below but
\begin{align}\label{intro_liu_b}
\E\left[\sum_{j=1}^N |A_{1,j}^\prime|^{-b}\right]=\infty\quad\text{ for some }b>0.
\end{align}
Note that with \eqref{intro_liu_b} Liu's result may still yield the existence of a density $f$, but 
cannot ensure that the $a$-th derivative of $f$ exists for $a>b$. The result in this paper uses a different approach that yields the existence of all derivatives of $f$ (under conditions introduced in \cref{def_A_conds}). 
}
\item{ {\bf Quicksort.} The Quicksort limit $X$ \citep{Roe91} satisfies an equation of type \eqref{main_rec_intro} with 
\begin{align*}
m=1,\quad d=1,\quad (A_{1,j})_{j\geq 1}=(U,1-U,0,\ldots),\quad b_1=g(U),
\end{align*}
 where $U$ is uniformly distributed on $[0,1]$ and $g:[0,1]\rightarrow \R$ is some function. Based on this equation Fill and Janson \cite{fiJa00} show that $X$ has a density which is a Schwartz function. Parts of the proofs in Section \ref{sec_proofs} (\cref{lem_c7_p} and \cref{lem_Schwartz_p}) are largely inspired by the work of Fill and Janson.

Note that the joint distribution of key comparisons and key exchanges in Quicksort also converges to a limit that can be described by an equation of type \eqref{main_rec_intro} (see \cite{Ne01}).
\cref{thm_A_conds} yields that this joint limit (as a random variable in $\R^2$) has an infinitely differentiable density. To the best of this author's knowledge this is the first\footnote{aside from $\C$-valued limits as a special case of $d=2$; cf., e.g., \cite{CGPT16,ChLiPou12,Ma15}} result on densities for solutions to \eqref{main_rec_intro} in $\R^d$ for $d\geq 2$.
 }
\item{ {\bf P{\'o}lya urns.} Consider a P{\'o}lya urn with $q$ colors and some replacement rule $R=(\xi_{i,j})_{i,j\in [q]}\in\N_0^{q\times q}$ (cf. Section \ref{sec:motivation} for details). 
A lot of replacement rules (often called large P\'olya urns) are known to lead to non-normal limit laws; see Janson \cite{Ja04}.
 Usually limit distributions in this context can be characterized by equations of type \eqref{main_rec_intro}; see \cite{KnNe14} for some examples. These cases are in general multidimensional ($m\geq 2$), which make them fall out of Liu's framework \cite{liu01}. 
 
It is known that in the case of two colors these limits have smooth
densities \cite{ChMaPou13}. For more than two colors, similar arguments show that at least the projections of the limits to the eigenspaces of $R$ have densities \cite{Ma15}.
\cref{thm_A_conds} provides a general framework for these examples (and others such as random replacement rules) and yields the existence of infinitely differentiable density functions for the limits.

Note that similar distributional equations appear in the context of $B$-urns \cite{CGPT16} and $m$-ary search trees \cite{ChLiPou14}  since the analysis of both processes is closely related to P{\'o}lya urns.

Finally note that the P{\'o}lya urn process has a well-known continuous time embedding (related to multitype branching processes; cf., e.g., \cite{Ja04} for P{\'o}lya urns or \cite{ChLiPou14} for $m$-ary search trees ). Limits of these continuous time counterparts can also be characterized by equations of type \eqref{main_rec_intro}; see \cite{ChMaPou13,ChPouSah11,Ma15} for some examples. In some cases it is known that the limits have densities \cite{ChPouSah11}, which, however, explode at $0$ \cite[Proposition 4.2]{ChPouSah11}. Since the methods in this paper are based on Fourier transformation (leading to continuous densities), we cannot hope to extend our methods to 
 continuous time models.
}
\item{ {\bf Path length of random trees.} The (total) path lengths of several random trees, in particular the class of random split trees \cite{Dev98}, converge to limits that can be described by equations like \eqref{main_rec_intro}. To the best of this author's knowledge the limit distribution in split trees has not been analyzed regarding absolute continuity. Under some mild assumption on the split vector (which are fulfilled in all examples given by Devroye \cite{Dev98}) \cref{thm_A_conds} yields that these limits have infinitely differentiable densities.

Another advantage of the general setting in \eqref{main_rec_intro} is that it also covers multivariate limits: The joint distribution of path length and Wiener index in a split tree converges to a limit distribution \cite{Mun11} given by an equation of type \eqref{main_rec_intro} (in $\R^2$). Again \cref{thm_A_conds} implies that the limit distribution has an infinitely differentiable density (in $\R^2$).}
\end{itemize}
To wrap up the advantages of this paper: One of the main advantages is the generality of the framework. Aside from Liu's result \cite{liu01}, all known results listed above are derived for an explicit type of $(A_{r,j})_{j\geq 1}$, that is either a beta- or a Dirichlet-distribution. This paper does not require such restrictive assumptions on $(A_{r,j})_{j\geq 1}$. Another advantage is that, despite its generality, the result is as powerful as the corresponding one for Quicksort \cite{fiJa00}, which in many examples beats the approach by Liu. Moreover, it covers systems of fixed point equations ($m\geq 2$), which arise, e.g., in P{\'o}lya urns. Furthermore it is the first result on fixed point equations for random variables in higher dimensions ($d\geq 2$) that arise in multivariate limit laws. 
We will also see in \cref{sec:motivation} how the main result can be applied straightforwardly to diverse examples.

The paper is organized as follows. Section \ref{sec:def} contains some basic notation 
and definitions. In particular, Schwartz functions are defined in that section. The main result (\cref{thm_A_conds}) is presented in \cref{sec_main_thm}. \cref{sec_main_thm} also contains some results that are proven as a preparation for \cref{thm_A_conds}. 
\cref{sec:motivation} provides several examples that are covered by \cref{thm_A_conds} such as Quicksort, P{\'o}lya urns, and path lengths in several random tree models.  Full proofs for the results in \cref{sec_main_thm} are given in \cref{sec_proofs}.
Finally, \cref{sec:conclusion} contains a discussion on the assumptions made in \cref{thm_A_conds} and some further directions for the analysis of stochastic fixed point equations.

\section{Preliminaries and Notation}\label{sec:def}

Throughout this paper let $d$ be a positive integer. Let $\R^d$ be endowed with the standard inner product $\langle x,y\rangle$ and Euclidean norm $\|x\|$ of vectors $x,y\in\R^d$.
The operator norm of a matrix $A$ is denoted by $\|A\|_{\mathrm{op}}$.
Let $\Re(z)$ and $\Im(z)$ denote real- and imaginary part of a complex number $z$.
Complex numbers are embedded into $\R^2$ as usual, that is $z\in \C$ is identified with the vector $(\Re(z),\Im(z))\in\R^2$. In particular, $\langle z_1,z_2\rangle:=\Re(z_1)\Re(z_2)+\Im(z_1)\Im(z_2)$ for $z_1,z_2\in\C$.\\

{\bf Notation.} Let $\N$ denote the set of all positive integers and let $\N_0:=\N\cup \{0\}$. Let $[m]:=\{1,\ldots,m\}$ for a positive integer $m$. Let $a\wedge b:=\min(a,b)$ and $a\vee b:=\max(a,b)$ for real numbers $a$ and $b$.  Let $\mathcal{L}(X)$ denote the distribution of a random variable $X$. 

Depending on the context, let ${\bf 0}$ either denote the zero vector of $\R^d$ or the $d\times d$ zero matrix, $d\geq 2$.

For a function $f:\R^d\rightarrow\R$ and a vector $\beta=(\beta_1,\ldots,\beta_d)\in\N_0^d$ let 
\begin{align*}
D^{\beta} f:=\frac{\partial^{\beta_1}}{\partial x_1^{\beta_1}}\cdots \frac{\partial^{\beta_d}}{\partial x_d^{\beta_d}} f.
\end{align*}
A function $f$ is called $n$-times continuously differentiable if $D^\beta f$ exists and is continuous for all $\beta$ with $\sum_j \beta_j\leq n$.
 Let $\mathcal{C}^n(\R^d)$ be the set of all $n$-times continuously differentiable functions $f:\R^d\rightarrow\R$. 
Moreover, let $\mathcal{C}^\infty(\R^d):=\bigcap_{n\in\N} \mathcal{C}^n(\R^d)$. 
\begin{defn}\label{def_Schwartz} 
 A function $f:\R^d\rightarrow \R$ is called a \emph{Schwartz function}
if 
\begin{align*}
(i)\;f\in\mathcal{C}^\infty(\R^d),\qquad (ii)\; \sup_{x\in\R^d}\left\{ \|x\|^\alpha \left| D^\beta f(x)\right|\right\} <\infty\text{ for all $\alpha\in\N$ and $\beta\in\N_0^d$}.
\end{align*}
Moreover, a function $g:\C\rightarrow \R$ is called a \emph{Schwartz function} if the function $f:\R^2\rightarrow\R$, $(x_1,x_2)\mapsto g(x_1+i x_2)$, is a Schwartz function.
\end{defn}
\begin{defn}\label{defSchwartzdens}
A probability distribution $\mu$ on $\R^d$ has a \emph{Schwartz density} if and only if there is a Schwartz function $f$ such that $f$ is a density of $\mu$.
An $\R^d$-valued random variable $X$ \emph{admits a Schwartz density} if and only if $\mathcal{L}(X)$ has a Schwartz density.
\end{defn}
\begin{defn}\label{defSuppGenPos}
$\Supp(X)$ denotes the support of a $\R^d$-valued random variable $X$, i.e.
\begin{align*}
\Supp(X):=\{x\in\R^d : \Prob(\|X-x\|<\varepsilon)>0\text{ for all }\varepsilon>0\}.
\end{align*}
$\Supp(X)$ is in {\it general position} if there are $x_0,\ldots,x_d\in\Supp(X)$ such that 
\begin{align*}
x_1-x_0,x_2-x_0,\ldots,x_d-x_0\text{ are linearly independent in $\R^d$.}
\end{align*}
For $d=1$, $\Supp(X)$ is in general position if and only if $\Prob(X=c)<1$ for all $c\in\R$.
\end{defn}
\begin{defn}\label{def_nonlattice}
	An $\R^d$-valued random variable $X$ has a non-lattice distribution if
	\begin{align*}
	\Prob(\langle s, X \rangle \in \Z+c)<1\text{ for all }s\in\R^d\setminus\{{\bf 0}\}\text{ and }c\in\R,
	\end{align*}
	where ${\bf 0}$ denotes the zero vector of $\R^d$.
\end{defn}

\section{Main Results}\label{sec_main_thm}

Let $m$ be a positive integer. Let $X_1,\ldots,X_m$ be $\R^d$-valued random variables which solve a system of distributional equations introduced in Equation \eqref{main_rec} below.
Assume for every $r\in[m]$ the existence of a function $\ell_r:\N\rightarrow[m]$, a family $(A_{r,j})_{j\geq 1}$ of random $d\times d$ matrices, and a $\R^d$-valued random variable $b_r$ such that $X_r$ satisfies the following distributional equation:
 \begin{align}\label{main_rec}\tag{DE}
 X_r\stackrel{d}{=} \sum_{j=1}^{\infty} A_{r,j} X_{\ell_r(j)}^{(j)}+b_r,\quad r\in [m],
\end{align}
where $X_{\ell_r(1)}^{(1)},X_{\ell_r(2)}^{(2)},\ldots$ and $((A_{r,j})_{j\geq 1},b_r)$ are independent, and $X_{\ell_r(j)}^{(j)}$ has the same distribution as $X_{\ell_r(j)}$ for all $j\geq 1$.
The infinite series in \eqref{main_rec} denote the a.s.~limit of the partial sums, i.e.~we assume implicitly at this point that the partial sums converge almost surely.
To ensure this convergence, we make the following assumption throughout the article: There is a constant $\varepsilon>0$ such that {\bf at least one} of the following two conditions holds for all $r\in [m]$:
\begin{flalign}
&\#\{ j: A_{r,j}\neq {\bf 0}\}<\infty\quad \text{a.s.},\tag{S.a}\label{cond_Sa}\\
 &\E[\|X_r\|^{\varepsilon}]<\infty\quad \text{and}\quad \limsup_{j\rightarrow\infty}(j\log^2 j)^{1+1/\varepsilon}\|A_{r,j}\|_{\mathrm{op}}<\infty \quad\text{a.s.}\tag{S.b}\label{cond_Sb}.
\end{flalign}
\begin{rem}In order to see that \eqref{cond_Sb} implies the convergence of the partial sums, note that $\E[\|X_r\|^\varepsilon]<\infty$ implies
$\Prob(\|X_r\|>(j \log^2 j)^{1/\varepsilon})=\bo(1/(j\log^2 j))$ by Markov's inequality. Hence, by the Borel-Cantelli Lemma and the assumption on $A_{r,j}$,
\begin{align*}
\|A_{r,j}X_{\ell_r(j)}^{(j)}\|\leq \|A_{r,j}\|_{\mathrm{op}}\|X_{\ell_r(j)}^{(j)}\|=\bo\left(1/(j\log^2 j)\right)\quad \text{a.s.}
\end{align*}
Thus $\sum_j \|A_{r,j}X_{\ell_r(j)}^{(j)}\| <\infty$ almost surely and the convergence of the series in \eqref{main_rec} follows from the triangle inequality.
\end{rem}
As a preparation for the main results we introduce some notation. Let $A_{r,j}^T$ denote the transpose of $A_{r,j}$ and
\begin{equation}\label{def_alpha_ij}
 \alpha_{r,j}:=\min_{\|t\|=1} \|A_{r,j}^T t\|,\qquad \|A_{r,j}^T\|_{\mathrm{op}}:=\max_{\|t\|=1} \|A_{r,j}^T t\|,\quad \text{ for }r\in [m],\,j\in\N.
\end{equation}
 Moreover, let $\amax_r\geq\asec_r$ denote the two largest elements in $(\alpha_{r,j})_{j\geq 1}$. Note that these elements are well defined since \eqref{cond_Sa} or \eqref{cond_Sb} imply 
 \begin{align*}
 \alpha_{r,j}\leq\|A_{r,j}^T\|_{\mathrm{op}}=\|A_{r,j}\|_{\mathrm{op}}\longrightarrow 0\quad\text{ a.s.~as $j\rightarrow\infty$.}
 \end{align*}
 Recall that $A_{r,j}$ and thus also $\alpha_{r,j}$, $\amax_r$ and $\asec_r$ are random variables.
 
Finally, for every interval $\mathcal{I}\subset\R$ and every $r\in[m]$ let $N_r(\mathcal{I})$ be the (possibly infinite) random variable given by
\begin{equation}\label{def_nr01coeff}
N_r(\mathcal{I}):=\sum_{j=1}^{\infty} \Ind_{\{\alpha_{r,j}\in\mathcal{I}\}\cap\{\|A_{r,j}^T\|_{\mathrm{op}}\in \mathcal{I}\}}.
\end{equation}
The following conditions are tailored to the examples in the next section. More general conditions are discussed in \cref{def_c} after the main result.
\begin{defn}\label{def_A_conds}
Conditions \eqref{s_cond_2}-\eqref{s_cond_6} hold if 
for all $r\in[m]$ and $j\geq 1$:
\begin{flalign}
&\Prob(\amax_r \geq a)=1\quad \text{for some constant $a>0$,}&\label{s_cond_2}\tag{A1}\\
&\Prob(\asec_r\leq x) \leq \lambda x^{\nu}\quad \text{ for some $\lambda,\nu>0$ and all $x>0$,}&\label{s_cond_3}\tag{A2}\\
&\Prob(\|A_{r,j}^T\|_{\mathrm{op}}\leq 1)=1,&\label{s_cond_4}\tag{A3}\\
&\Supp(X_r)\text{ is in general position (see Definition \ref{defSuppGenPos}), }&\label{s_cond_5} \tag{A4}\\
&\Prob(N_r( \mathcal{I})\geq 1)>0\text{ for }\mathcal{I}:=(0,1)\subset\R.\tag{A5}&\label{s_cond_6}
\end{flalign}
\end{defn}
The main result of this paper is the following theorem:
\begin{thm}\label{thm_A_conds} Conditions \eqref{s_cond_2}-\eqref{s_cond_6} imply that
$X_r$ admits a bounded density function $f_r\in\mathcal{C}^\infty(\R^d)$ for all $r\in [m]$.
If additionally $\E[\|X_r\|^p]<\infty$ for all $r\in[m]$ and $p>0$, then $X_r$ admits a Schwartz density (see Definition \ref{defSchwartzdens}).
\end{thm}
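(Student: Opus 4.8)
The plan is to prove the theorem via Fourier analysis: it suffices to show that each characteristic function $\phi_r(t):=\E[e^{i\langle t,X_r\rangle}]$ is integrable over $\R^d$ (for boundedness and continuity of the density), and moreover that $\|t\|^N|\phi_r(t)|$ is integrable for every $N$ (for the $\mathcal{C}^\infty$ claim, since then every $D^\beta f_r$ is given by an absolutely convergent inverse Fourier integral), and finally that under the moment hypothesis $\phi_r$ is itself Schwartz, so its inverse Fourier transform is Schwartz. Thus the whole game is to extract \emph{rapid polynomial decay} of $|\phi_r|$ from the system \eqref{main_rec}.

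\textbf{Setting up the functional inequality.} Taking characteristic functions in \eqref{main_rec} and using the independence assumptions gives, for each $r$,
\begin{align*}
\phi_r(t)=\E\!\left[e^{i\langle t,b_r\rangle}\prod_{j\geq 1}\phi_{\ell_r(j)}\!\big(A_{r,j}^T t\big)\right].
\end{align*}
Taking absolute values and bounding $|e^{i\langle t,b_r\rangle}|\le 1$ yields
\begin{align*}
|\phi_r(t)|\le \E\!\left[\prod_{j\geq 1}\big|\phi_{\ell_r(j)}(A_{r,j}^T t)\big|\right].
\end{align*}
The point of the definitions of $\alpha_{r,j}$ and $\|A_{r,j}^T\|_{\mathrm{op}}$ is that $\alpha_{r,j}\|t\|\le\|A_{r,j}^T t\|\le \|A_{r,j}^T\|_{\mathrm{op}}\|t\|$, so the argument of each factor has norm comparable to $\|t\|$ up to these two random scalars. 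I would first establish a crude bound: under \eqref{s_cond_5} (general position) together with \eqref{s_cond_6}, the event $\{N_r((0,1))\ge1\}$ provides at least one index $j$ with $\alpha_{r,j},\|A_{r,j}^T\|_{\mathrm{op}}\in(0,1)$ strictly; the general-position condition forces $|\phi_{\ell_r(j)}|$ to be bounded away from $1$ on annuli $\{c_1\le\|u\|\le c_2\}$ (this is the classical fact that a non-degenerate distribution has characteristic function of modulus $<1$ off the origin, uniformly on compacts; the general-position hypothesis is exactly what makes it hold in all directions of $\R^d$). Iterating the displayed contraction, one gets that $|\phi_r(t)|\to0$ as $\|t\|\to\infty$ — this is the content of a lemma like \cref{lem_conv_zero}, and with the quantitative tail bound \eqref{s_cond_3} on $\asec_r$ one upgrades it to polynomial decay (a result like \cref{thm_poly_decay}): $|\phi_r(t)|=\bo(\|t\|^{-\eta})$ for some $\eta>0$ depending on $\nu$ and the number of ``small'' coefficients one can guarantee.

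\textbf{The bootstrap.} The heart of the proof — and the main obstacle — is turning a fixed polynomial decay rate $\eta$ into an \emph{arbitrary} one. Here is where \eqref{s_cond_2} (some $\alpha_{r,j}\ge a$ with probability one, i.e.\ a coefficient that is \emph{not} contractive, uniformly) and \eqref{s_cond_3} (the \emph{second}-largest coefficient $\asec_r$ has polynomially small probability of being tiny) combine. Informally: on a typical realization there is one ``big'' coordinate $\amax_r$ near full size and the remaining product $\prod_{j:\alpha_{r,j}=\text{not the max}}|\phi_{\ell_r(j)}(A_{r,j}^T t)|$ already decays like $\|\asec_r t\|^{-\eta}$ by the previous step. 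Feeding this back into the displayed inequality, $|\phi_r(t)|\lesssim \E[(1+\asec_r\|t\|)^{-\eta}]$, and condition \eqref{s_cond_3} makes this expectation itself decay like $\|t\|^{-(\eta\wedge\nu)}$ or so — but crucially the iteration can be arranged (by grouping more and more small coefficients, or iterating the equation $k$ times) so that the exponent grows without bound; each round of substitution multiplies the effective number of small factors, and \eqref{s_cond_3} converts ``many small factors'' into ``high polynomial decay.'' This is morally the Fill--Janson argument for Quicksort, and the technical work (carried out in \cref{lem_c7_p} and friends) is to make the grouping uniform over $r\in[m]$ in the presence of the coupled system and to control the expectations via \eqref{s_cond_3} and \eqref{s_cond_4} (the latter keeps all arguments inside the region where previous bounds apply). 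Condition \eqref{s_cond_4}, $\|A_{r,j}^T\|_{\mathrm{op}}\le1$, also ensures the infinite product is genuinely a product of moduli $\le1$ so truncation is harmless.

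\textbf{From decay of $\phi_r$ to the density.} Once $|\phi_r(t)|=\bo(\|t\|^{-N})$ for every $N$, Fourier inversion gives $f_r(x)=(2\pi)^{-d}\int_{\R^d}e^{-i\langle t,x\rangle}\phi_r(t)\,dt\in\mathcal{C}^\infty(\R^d)$, bounded, with $D^\beta f_r$ obtained by differentiating under the integral (justified by the rapid decay). For the Schwartz conclusion: if all moments of $X_r$ are finite then $\phi_r\in\mathcal{C}^\infty$ and all its derivatives $D^\gamma\phi_r$ are again characteristic-function-like objects satisfying analogous functional inequalities (differentiating the product rule expression brings down factors of $b_r$ and $A_{r,j}^T$ which have all moments), so the same bootstrap yields $\|t\|^N|D^\gamma\phi_r(t)|=\bo(1)$ for all $N,\gamma$, i.e.\ $\phi_r$ is Schwartz; since the Fourier transform preserves the Schwartz class, $f_r$ is Schwartz. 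I expect the bookkeeping of the bootstrap — choosing how many coefficients to bundle at each stage so the exponent provably tends to infinity, uniformly in $r$, while the relevant expectations stay finite — to be the delicate part; the inversion and differentiation-under-the-integral steps are routine given the decay.
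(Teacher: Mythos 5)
Your plan matches the paper's proof for the $\mathcal{C}^\infty$ part: same functional inequality $|\phi_r(t)|\leq\E\bigl[\prod_j|\phi_{\ell_r(j)}(A_{r,j}^Tt)|\bigr]$, same three-stage escalation (decay to zero via \eqref{s_cond_5}--\eqref{s_cond_6}, polynomial decay via \eqref{s_cond_3}, then a bootstrap), and Fourier inversion. Your description of the bootstrap is vaguer than the paper's — the paper packages it as a single condition \eqref{c7} with a map $\chi(\beta)=\beta+(\beta\wedge\nu)/2$ obtained by splitting off the guaranteed large factor $\amax_r\geq a$ and absorbing the rest through \eqref{s_cond_3}, so that $\chi^n(\eta)\to\infty$ — but the underlying mechanism you describe is the same.

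Where you genuinely diverge, and where there is a gap, is the Schwartz step. You propose to differentiate the recursive identity for $\phi_r$ and claim that each $D^\gamma\phi_r$ satisfies ``an analogous functional inequality'' amenable to the same bootstrap. This does not go through as stated, for three reasons. First, you assert without proof that $b_r$ has all moments; the hypothesis is only $\E[\|X_r\|^p]<\infty$, and extracting moments of $b_r$ from the distributional identity is not automatic. Second, differentiating the product $\prod_j\phi_{\ell_r(j)}(A_{r,j}^Tt)$ produces a (possibly infinite, under \eqref{cond_Sb}) \emph{sum} of products, each with one differentiated factor; the bootstrap in \cref{lem_c7_p} exploits the purely multiplicative structure and does not iterate through such sums. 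Third, each differentiated term couples $D^\beta\phi_r$ to lower-order derivatives of the other $\phi_s$, creating a triangular system rather than the same inequality. The paper sidesteps all of this with \cref{lem_Schwartz_p} and the interpolation inequality of \cref{calc_lemma} (a $d$-dimensional version of Fill--Janson's Lemma 2.10): finite moments give $\sup_t|D^\gamma\phi_r(t)|<\infty$ for all $\gamma$, and the already-proved decay of $\phi_r$ itself then propagates to every derivative via $|\partial_jg(t)|\leq 2\sqrt{ab}\,\|t\|^{-p/2}$ whenever $|g(t)|\leq a\|t\|^{-p}$ and $|\partial_j^2g(t)|\leq b$. No functional inequality for the derivatives — and no moment bound on $b_r$ — is needed. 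You should replace your Schwartz paragraph with this interpolation argument.
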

\begin{rem}\label{rem_s_cond_5}
	Condition \eqref{s_cond_5} might not seem sensible at first since it refers to the (unknown) distributions of $X_1,\ldots,X_m$ rather than the coefficients $(A_{r,j})_{j\geq 1}$ and $b_r$. However, in several examples, e.g.~the two color urn limits discussed in \cref{sec:motivation}, \eqref{main_rec} has a degenerate solution in addition its absolute continuous solutions. Hence, only making assumptions for the coefficients is not sufficient to ensure absolute continuity of the solution.
	
	Checking \eqref{s_cond_5} for higher dimensions ($d\geq 2$) is more tedious. We will discuss two examples (bivariate Quicksort limit and the joint distribution of path length and Wiener index in split trees)
	in \cref{sec:motivation} where \eqref{s_cond_5} can be deduced from the coefficients $(A_{r,j})_{j\geq 1}$ and $b_r$.
\end{rem}

The proof of \cref{thm_A_conds} is based on Fourier analysis. 
We only outline the proof strategy in this section. Full proofs are given in \cref{sec_proofs}.

Note that the set of Schwartz functions is preserved under Fourier transformation \citep[Theorem 7.4(d)]{Ru91}. Thus the characteristic function of a distribution is a Schwartz function if and only if the distribution has a Schwartz density. For the remainder of the section we discuss conditions on \eqref{main_rec} and their effect on the characteristic functions
\begin{align*}
\phi_r(t):=\E[\exp(i\langle t, X_r\rangle)],\,t\in\R^d,\,r\in[m].
\end{align*}
 The first step of the proof is to verify that \eqref{s_cond_2}-\eqref{s_cond_6} imply the following conditions:
\begin{defn}\label{def_c} 
Conditions \eqref{c1}-\eqref{c3} hold if for all $r\in[m]$:
\begin{flalign}
&\Prob(\amax_r>0)=1 ,&\label{c1}\tag{C1}\\
&\E[N_r((0,1])]>1 , &\label{c2}\tag{C2}\\
&X_r \text{ has a non-lattice distribution (cf.~\cref{def_nonlattice}).} &\label{c3}\tag{C3}
\end{flalign}
Let $\eta>0$. Conditions \eqref{c4}-\eqref{c6} hold for $\eta$ if for all $r\in[m]$:
\begin{flalign}
&\E[(\asec_r)^{-\eta} |\asec_r>0]<\infty ,&\label{c4}\tag{C4}\\
&\Prob(\amax_r \leq x)= \bo(x^\eta)\text{ as } x\rightarrow 0,& \label{c5}\tag{C5}\\
&\E[(\amax_r)^{-\eta}\Ind_{\{\asec_r=0\}}] <1.& \label{c6}\tag{C6}
\end{flalign}
Finally, let $\chi:(0,\infty)\rightarrow(0,\infty)$ be a function. Condition \eqref{c7} holds for $\chi$ if for all $\beta>0$ a constant $C_\beta>0$ exists such that for all $x>0$ and $r\in[m]$
\begin{flalign}
& \E\left[ \prod_{j\geq 1} \left((\alpha_{r,j} x)^{-\beta} \wedge 1 \right)\right]\leq C_{\beta} x^{-\chi(\beta)}. &\label{c7}\tag{C7}
\end{flalign}
\end{defn}
Afterwards, we successively improve bounds on $|\phi_r(t)|$ as indicated in the results below. The Fourier inversion formula then yields the existence and differentiability of density functions for $X_1,\ldots,X_m$.
\begin{lem}\label{lem_conv_zero}
Assume \eqref{c1}-\eqref{c3}. Then, 
\begin{align*}
\lim_{R\rightarrow\infty} \sup_{\|t\|=R} \left| \phi_r(t)\right| =0
\text{ for all } r\in[m].
\end{align*}
\end{lem}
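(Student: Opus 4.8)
The plan is to pass to characteristic functions and to set up a self-improving inequality for
\[
 \rho_r:=\limsup_{\|t\|\to\infty}|\phi_r(t)|=\lim_{R\to\infty}\sup_{\|t\|\ge R}|\phi_r(t)|\in[0,1],
\]
so that the conclusion of the lemma amounts to $\rho_r=0$ for all $r$. First I would condition on $((A_{r,j})_{j\ge1},b_r)$ and use the independence assumed in \eqref{main_rec} to get, for every $r\in[m]$ and $t\in\R^d$,
\[
 |\phi_r(t)|=\Bigl|\,\E\Bigl[e^{i\langle t,b_r\rangle}\prod_{j\ge1}\phi_{\ell_r(j)}(A_{r,j}^Tt)\Bigr]\Bigr|\le\E\Bigl[\prod_{j\ge1}\bigl|\phi_{\ell_r(j)}(A_{r,j}^Tt)\bigr|\Bigr].
\]
Set $\rho:=\max_{r\in[m]}\rho_r$, let $r_0$ attain this maximum, and argue by contradiction assuming $\rho>0$.

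\emph{The contraction.} Since each factor $|\phi_{\ell_{r_0}(j)}(A_{r_0,j}^Tt)|$ takes values in $[0,1]$, one has $\limsup_{\|t\|\to\infty}\prod_j|\phi_{\ell_{r_0}(j)}(A_{r_0,j}^Tt)|\le\prod_j\limsup_{\|t\|\to\infty}|\phi_{\ell_{r_0}(j)}(A_{r_0,j}^Tt)|$; for indices with $\alpha_{r_0,j}>0$ we have $\|A_{r_0,j}^Tt\|\ge\alpha_{r_0,j}\|t\|\to\infty$, so that factor's $\limsup$ is $\le\rho_{\ell_{r_0}(j)}\le\rho$, while the remaining factors are $\le1$. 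Combining this with Fatou's lemma in its $\limsup$ form (legitimate since the integrand is $\le1$) yields
\[
 \rho=\rho_{r_0}\le\E\Bigl[\prod_{j:\,\alpha_{r_0,j}>0}\rho_{\ell_{r_0}(j)}\Bigr]\le\E\bigl[\rho^{\,M_{r_0}}\bigr],\qquad M_{r_0}:=\#\{j\ge1:\alpha_{r_0,j}>0\}.
\]
By \eqref{c1}, $M_{r_0}\ge1$ almost surely, and since $M_{r_0}\ge N_{r_0}((0,1])$ while \eqref{c2} forces $\Prob\bigl(N_{r_0}((0,1])\ge2\bigr)>0$, we get $\Prob(M_{r_0}\ge2)>0$. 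If $0<\rho<1$ this already gives $\E[\rho^{M_{r_0}}]<\rho$, a contradiction; and once $\rho<1$ is known in general, writing $\E[\rho^{M_{r_0}}]=\rho\,\E[\rho^{M_{r_0}-1}]$ with $M_{r_0}-1\ge0$ a.s.\ and $\ge1$ with positive probability shows $\E[\rho^{M_{r_0}-1}]<1$, i.e.\ $\rho<\rho$, absurd. So it only remains to exclude $\rho=1$.

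\emph{Excluding $\rho=1$ --- the main obstacle.} Here the non-lattice hypothesis \eqref{c3} enters; recall it is equivalent to $|\phi_r(u)|<1$ for every $u\in\R^d\setminus\{0\}$ and all $r$. Suppose $\rho_{r_0}=1$ and pick $t_n$ with $\|t_n\|\to\infty$ and $|\phi_{r_0}(t_n)|\to1$. The complex random variable $W_n:=e^{i\langle t_n,b_{r_0}\rangle}\prod_j\phi_{\ell_{r_0}(j)}(A_{r_0,j}^Tt_n)$ satisfies $|W_n|\le1$ and $|\E W_n|=|\phi_{r_0}(t_n)|\to1$, so $\E|W_n-\E W_n|^2=\E|W_n|^2-|\E W_n|^2\to0$; along a subsequence $W_n$ then converges a.s.\ to a constant of modulus $1$, whence $\prod_j|\phi_{\ell_{r_0}(j)}(A_{r_0,j}^Tt_n)|\to1$ a.s., forcing $|\phi_{\ell_{r_0}(j)}(A_{r_0,j}^Tt_n)|\to1$ a.s.\ for every $j$. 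Using \eqref{c2} and monotone convergence, fix $\varepsilon_0>0$ with $\E[N_r((\varepsilon_0,1])]>1$ for all $r$; on the positive-probability event $\{N_{r_0}((\varepsilon_0,1])\ge1\}$ choose $j$ with $\alpha_{r_0,j}\in(\varepsilon_0,1]$, set $s:=\ell_{r_0}(j)$ and $u_n:=A_{r_0,j}^Tt_n$, so that $\|u_n\|\in[\varepsilon_0\|t_n\|,\|t_n\|]$ and $|\phi_s(u_n)|\to1$. Iterating this passage and using that there are only finitely many states, one reaches a recurring state; the accumulated self-similar relations ``$|\phi_{s_k}(u_n^{(k)})|\to1$ at scales differing by bounded factors'' must then be reconciled, and I expect this to be the hard step: the aim is to extract from it a genuine lattice statement --- heuristically, that some $X_r$ is asymptotically lattice-distributed with offsets matched across the hierarchy of scales in \eqref{main_rec} --- contradicting \eqref{c3}. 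Everything else (the conditioning identity, the $\limsup$-of-a-product bound, Fatou, and the elementary manipulations with $\rho^{M_{r_0}}$) is routine; turning $\rho=1$ into a lattice contradiction via \eqref{c3} is the delicate part.
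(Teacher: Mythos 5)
Your first half --- passing to $\rho_r=\limsup_{\|t\|\to\infty}|\phi_r(t)|$, bounding $|\phi_r|$ by $\E\bigl[\prod_j|\phi_{\ell_r(j)}(A_{r,j}^T t)|\bigr]$, applying a reverse-Fatou argument and the product-of-limsups bound, and concluding $\rho\le\E[\rho^{M_{r_0}}]$ --- matches the paper's argument closely (the paper works with $\xi_r$ instead of $\rho_r$, and with $|I_r|$ instead of $M_{r_0}$, but these are the same objects). Your deduction that $\rho\in\{0,1\}$ is correct, although the way you phrase the $0<\rho<1$ case (``this already gives a contradiction'') is a small detour; the cleaner route, as in the paper, is simply $\rho\le\Prob(|I_{r_0}|=1)\,\rho+(1-\Prob(|I_{r_0}|=1))\,\rho^2$, which forces $\rho\le\rho^2$.

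The genuine gap is exactly where you flag it: you never actually exclude $\rho=1$. Your sketched route --- follow a sequence $t_n$ achieving the limsup, show $W_n$ concentrates in $L^2$, iterate down one level of the recursion to a new characteristic function at a comparable scale, invoke finiteness of the state space to hit a recurring state, and ``reconcile'' the self-similar relations into a lattice contradiction --- does not close. The iteration multiplies scales by random factors $\alpha_{r,j}\in(\varepsilon_0,1]$, so even after returning to a recurring state there is no reason for the accumulated scaling to fall on a lattice, and the obstruction is real, not merely a detail to be filled in. The paper's proof of this step is genuinely different and bypasses your difficulty: it first uses \eqref{c3} (via a compactness argument) to establish $g(R)<1$ pointwise for all $R>0$, then, assuming $\xi=1$, constructs for each $n$ a window $[R_1^{(n)},R_2^{(n)}]$ with $g(R_1^{(n)})=g(R_2^{(n)})=1-1/n$, $g\le 1-1/n$ on the window, $R_1^{(n)}\to 0$, and $R_2^{(n)}\to\infty$. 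Evaluating the recursive bound at $\|t_n\|=R_2^{(n)}$, one observes that every coefficient with $\alpha_{r_n,j}\ge R_1^{(n)}/R_2^{(n)}$ and $\|A_{r_n,j}^T\|_{\mathrm{op}}\le 1$ lands $A_{r_n,j}^T t_n$ inside the window, where $g\le 1-1/n$. This gives $1-1/n\le\E\bigl[(1-1/n)^{N_{r_n}^{(n)}}\bigr]$ with $N_{r_n}^{(n)}\to N_{r_n}((0,1])$; a truncated expansion of $(1-1/n)^x$ together with $\E[N_r((0,1])]>1$ then yields the contradiction. The non-lattice hypothesis \eqref{c3} is used only in the mild form $g<1$ on $(0,\infty)$, not to produce an actual lattice structure as you were aiming for --- this is what makes the paper's exclusion step tractable, and it is the idea missing from your outline.
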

\begin{prop}\label{thm_poly_decay} Assume \eqref{c1}-\eqref{c3} and
\eqref{c4}-\eqref{c6} with $\eta>0$. Then,
\begin{align}\label{poly_decay_phi}
|\phi_r(t)|=\bo\left(\|t\|^{-\eta}\right)\text{ for all $r\in[m]$ as } \|t\|\rightarrow\infty.
\end{align}
If $\eta>d$, then  $X_r$ admits a bounded density function $f_r\in\mathcal{C}^{\lceil \eta \rceil -d-1}(\R^d)$ for all $r\in[m]$.
\end{prop}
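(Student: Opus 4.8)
The plan is to prove \cref{thm_poly_decay} in two stages: first establish the polynomial decay bound \eqref{poly_decay_phi} on the characteristic functions, and then deduce the smoothness of the densities by Fourier inversion.

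\textbf{Stage 1: The decay bound.} The starting point is the functional equation obtained by taking characteristic functions in \eqref{main_rec}: conditioning on $((A_{r,j})_{j\geq 1},b_r)$ and using independence,
\begin{align*}
\phi_r(t)=\E\left[e^{i\langle t,b_r\rangle}\prod_{j\geq 1}\phi_{\ell_r(j)}(A_{r,j}^T t)\right],
\end{align*}
so $|\phi_r(t)|\leq \E\bigl[\prod_{j\geq 1}|\phi_{\ell_r(j)}(A_{r,j}^T t)|\bigr]$. Set $g_r(R):=\sup_{\|t\|\geq R}|\phi_r(t)|$ and $g(R):=\max_r g_r(R)$; by \cref{lem_conv_zero} (which applies, since \eqref{c1}--\eqref{c3} are assumed) we have $g(R)\to 0$. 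The idea, following the branching-process approach of Liu \cite{liu01} adapted to the present setting, is to iterate this inequality. In each factor, $\|A_{r,j}^T t\|\geq \alpha_{r,j}\|t\|$, so if $\alpha_{r,j}\|t\|$ is not too small we may bound $|\phi_{\ell_r(j)}(A_{r,j}^T t)|\leq g(\alpha_{r,j}\|t\|)$; conditions \eqref{c4}--\eqref{c6} are precisely what control the two largest coefficients $\amax_r,\asec_r$ and guarantee, roughly, that with the relevant weighting there are ``more than one'' non-negligible contraction factors of size comparable to $1$ and no atom forcing a single factor to vanish. I would make an induction ansatz $g_r(R)\leq c\,R^{-\eta}$ on dyadic scales: feeding the ansatz into the product bound and splitting the expectation according to whether $\asec_r>0$ or $\asec_r=0$ (using \eqref{c6} to absorb the latter term, which contributes a factor $<1$, and \eqref{c4}/\eqref{c5} to integrate $(\asec_r)^{-\eta}$, $(\amax_r)^{-\eta}$ against the small-argument regime where the ansatz has not yet kicked in) yields a self-improving inequality of the form $g_r(R)\leq \theta\, c\, R^{-\eta}+O(R^{-\eta})$ with $\theta<1$, closing the induction for $c$ large enough. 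One must be careful near the origin (where $g$ is not small and the bound $|\phi|\leq 1$ must be used instead) and handle the mutual recursion over $r\in[m]$ by working with the vector $(g_1,\ldots,g_m)$ simultaneously, using \eqref{c1} to ensure $\amax_{\ell_r(j)}>0$ propagates. This bootstrap is the main obstacle: getting the constant $\theta<1$ right and carefully partitioning the expectation so that the ``bad'' low-argument part is genuinely $O(R^{-\eta})$ rather than destroying the estimate.

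\textbf{Stage 2: From decay to densities.} Once $|\phi_r(t)|=O(\|t\|^{-\eta})$ with $\eta>d$, the characteristic function $\phi_r$ is integrable on $\R^d$, so Fourier inversion gives a bounded continuous density $f_r(x)=(2\pi)^{-d}\int_{\R^d}e^{-i\langle t,x\rangle}\phi_r(t)\,dt$. To get $\mathcal{C}^{\lceil\eta\rceil-d-1}$ regularity, differentiate under the integral: each derivative $D^\beta f_r$ brings down a factor $t^\beta$, and the resulting integrand is dominated by $\|t\|^{|\beta|}|\phi_r(t)|=O(\|t\|^{|\beta|-\eta})$, which is integrable near infinity whenever $|\beta|-\eta<-d$, i.e.\ $|\beta|<\eta-d$, hence for all $|\beta|\leq\lceil\eta\rceil-d-1$. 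Dominated convergence then justifies differentiating under the integral sign and gives continuity of $D^\beta f_r$, completing the proof. This stage is routine; the only care needed is checking that $\lceil\eta\rceil-d-1$ is the correct (largest guaranteed) order, which follows from the strict inequality $|\beta|<\eta-d$.
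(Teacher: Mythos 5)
Your proposal follows the same overall route as the paper's: establish the recursive bound $\psi_r(x)\leq\E\bigl[\prod_j\psi_{\ell_r(j)}(\alpha_{r,j}x)\bigr]$ from \cref{rec_char_fct}, invoke \cref{lem_conv_zero} to get $\psi$ small at large arguments, split the expectation by $\asec_r=0$ versus $\asec_r>0$, bring in \eqref{c4}--\eqref{c6} to control the pieces, and then iterate. Stage~2 (Fourier inversion and differentiation under the integral) matches the paper exactly and is correct, including the accounting giving $\lceil\eta\rceil-d-1$ orders of differentiability.

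Where the sketch falls short of a proof is the bootstrap itself. ``Assume $g(R)\leq cR^{-\eta}$, feed it back in, get $g(R)\leq\theta cR^{-\eta}+O(R^{-\eta})$ with $\theta<1$, close for $c$ large'' has no starting point: there is no scale at which the polynomial ansatz is \emph{known} to hold a priori, and ``induction on dyadic scales'' does not mesh with the multiplicative contraction $x\mapsto\alpha_{r,j}x$. The paper instead runs an explicit iteration in a counter $n$, starting from the trivial bound $\psi\leq 1$ and proving
\begin{align*}
\psi(x)\ \leq\ C\Bigl(\frac{x_0}{x}\Bigr)^{\eta}\sum_{j=0}^{n-1}(c_1+c_2\varepsilon)^j\ +\ (p+\varepsilon)^n
\end{align*}
for all $n$, where $C(x_0/x)^\eta$ bounds $\Prob(\amax_r\leq x_0/x)$ via \eqref{c5}, $c_1<1$ comes from \eqref{c6}, $c_2<\infty$ from \eqref{c4}, and --- crucially --- $p:=\max_r\Prob(\asec_r=0)<1$ is a consequence of \eqref{c2}. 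Your Stage~1 never uses \eqref{c2}, yet without $p<1$ the carry-over from the trivial bound $\psi\leq1$ (the $(p+\varepsilon)^n$ term) never decays and the iteration stalls. Once $\varepsilon$ is chosen small enough that $c_1+c_2\varepsilon<1$ and $p+\varepsilon<1$, letting $n\to\infty$ yields $\psi(x)=\bo(x^{-\eta})$. So the ingredients you name are the right ones, but to make the bootstrap rigorous you need precisely this $n$-fold iteration (or an equivalent fixed-point argument with an explicit starting point), and you need the additional input $\Prob(\asec_r=0)<1$ supplied by \eqref{c2}.
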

Note that \cref{lem_conv_zero} and \cref{thm_poly_decay} are based on the strategy of Liu \cite{liu01}, who studied \eqref{main_rec} with $m=1$, $d=1$ and $b_1=0$. However,
 in every example in \cref{sec:motivation}, \eqref{c4} only holds for $\eta$ up to some constant $C$. Thus \cref{thm_poly_decay} is either not sufficient to prove the existence of a density function at all (if $C\leq 1$) or at least fails to prove its smoothness. 
The bound in \cref{thm_poly_decay} can often be improved by \cref{coro_smooth_density} below. To this end, let $\chi^{n}$ denote the $n$-fold composition of a function $\chi:(0,\infty)\rightarrow(0,\infty)$ with itself.
\begin{prop}\label{coro_smooth_density}
 Assume \eqref{c1}-\eqref{c6} and \eqref{c7} with $\eta>0$ and $\chi:(0,\infty)\rightarrow (0,\infty)$ such that $\limsup\limits_{n\rightarrow\infty} \chi^{n}(\eta) =\infty$. Then, for all $\beta>0$, 
\begin{align*}
|\phi_r(t)|=\bo\left( \|t\|^{-\beta}\right)\text{ for all $r\in[m]$ as } \|t\|\rightarrow\infty.
\end{align*}
In particular, $X_r$ admits a bounded density function $f_r\in\mathcal{C}^\infty(\R^d)$ for all $r\in [m]$.
\end{prop}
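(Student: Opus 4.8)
The plan is to bootstrap the polynomial decay of the characteristic functions $\phi_r$ obtained in \cref{thm_poly_decay}, using \eqref{c7} to replace the current decay exponent by $\chi$ of it at each step. Conditioning on $((A_{r,j})_{j\ge1},b_r)$ in \eqref{main_rec} and exploiting the independence of the $X_{\ell_r(j)}^{(j)}$ gives, for $t\in\R^d$ and $r\in[m]$,
\begin{align*}
\phi_r(t)=\E\!\left[e^{i\langle t,b_r\rangle}\prod_{j\ge1}\phi_{\ell_r(j)}\!\left(A_{r,j}^{T}t\right)\right],\qquad\text{hence}\qquad |\phi_r(t)|\le\E\!\left[\prod_{j\ge1}\bigl|\phi_{\ell_r(j)}\!\left(A_{r,j}^{T}t\right)\bigr|\right],
\end{align*}
the infinite product being a.s.\ convergent with factors of modulus $\le1$ (this is the representation already underlying \cref{lem_conv_zero}). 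The only place the numbers $\alpha_{r,j}$ enter is the inequality $\|A_{r,j}^{T}t\|\ge\alpha_{r,j}\|t\|$, which is immediate from \eqref{def_alpha_ij}. The engine is the following claim: \emph{if $|\phi_s(u)|\le C\,(\|u\|^{-\gamma}\wedge1)$ for all $s\in[m]$ and $u\in\R^d$ with a constant $C\ge1$, then $|\phi_s(u)|\le C'\,(\|u\|^{-\chi(\gamma)}\wedge1)$ for all $s,u$ with a finite $C'=C'(C,\gamma)$}. Granting the claim: \cref{thm_poly_decay} together with $|\phi_s|\le1$ furnishes a $C^{(0)}\ge1$ with $|\phi_s(u)|\le C^{(0)}(\|u\|^{-\eta}\wedge1)$, and iterating gives, for each $k\ge0$, a constant $C^{(k)}$ with $|\phi_s(u)|\le C^{(k)}(\|u\|^{-\gamma_k}\wedge1)$ where $\gamma_k:=\chi^{k}(\eta)$. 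Since $\limsup_k\gamma_k=\infty$ by hypothesis, for every $\beta>0$ there is $n$ with $\gamma_n>\beta$, whence $|\phi_r(t)|\le C^{(n)}\|t\|^{-\beta}$ for $\|t\|\ge1$; that is, $|\phi_r(t)|=\bo(\|t\|^{-\beta})$ for every $\beta>0$ and every $r\in[m]$. Note that only $\limsup_k\gamma_k=\infty$ is needed, not an actual limit, since a smaller exponent is implied by a larger one on $\{\|t\|\ge1\}$.

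To prove the claim, substitute $|\phi_{\ell_r(j)}(A_{r,j}^{T}t)|\le\min\{1,\,C(\alpha_{r,j}\|t\|)^{-\gamma}\}$ (using $\|A_{r,j}^{T}t\|\ge\alpha_{r,j}\|t\|$ and $|\phi|\le1$) into the bound for $|\phi_r(t)|$, and use the elementary identity $\min\{1,Cy^{-\gamma}\}=\bigl(C^{-1/\gamma}y\bigr)^{-\gamma}\wedge1$ to rewrite
\begin{align*}
\prod_{j\ge1}\min\bigl\{1,\,C(\alpha_{r,j}\|t\|)^{-\gamma}\bigr\}=\prod_{j\ge1}\bigl((\alpha_{r,j}x)^{-\gamma}\wedge1\bigr),\qquad x:=C^{-1/\gamma}\|t\|.
\end{align*}
Taking expectations and invoking \eqref{c7} with $\beta=\gamma$ yields $|\phi_r(t)|\le C_\gamma\,x^{-\chi(\gamma)}=C_\gamma\,C^{\chi(\gamma)/\gamma}\,\|t\|^{-\chi(\gamma)}$ for all $t\neq0$; combined with $|\phi_r|\le1$ this gives the claim with $C':=\max\{1,\,C_\gamma C^{\chi(\gamma)/\gamma}\}$. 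The one point that needs care here — and what I expect to be the main obstacle to a naive argument — is the bookkeeping of constants through an \emph{infinite} product: a bound of the shape $|\phi_s(u)|\le C(\|u\|^{-\gamma}\wedge1)$ would naively contribute a factor $C$ for every index $j$ with $\alpha_{r,j}\|t\|$ of order $1$, and the number of such indices is unbounded in $\|t\|$; the scaling identity is precisely what avoids this, converting the multiplicative constant $C$ into a harmless rescaling of $\|t\|$ rather than a per-factor blow-up, so that \eqref{c7} applies verbatim.

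Finally, from $|\phi_r(t)|=\bo(\|t\|^{-\beta})$ for every $\beta>0$ it follows that $\|t\|^{n}|\phi_r(t)|$ is integrable on $\R^d$ for every $n\in\N_0$; Fourier inversion then produces a bounded continuous density $f_r$ for $X_r$, and differentiation under the integral sign shows that $D^{\kappa}f_r$ exists (and is continuous) for every $\kappa\in\N_0^d$, so $f_r\in\mathcal{C}^\infty(\R^d)$. This last step is the same Fourier-inversion argument already carried out in the proof of \cref{thm_poly_decay} to obtain a $\mathcal{C}^{\lceil\eta\rceil-d-1}$ density, now available for arbitrarily large decay exponents and hence for all orders of differentiation, so it can simply be cited from there.
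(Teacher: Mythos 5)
Your proof is correct and follows essentially the same route as the paper: polynomial decay from \cref{thm_poly_decay}, then a bootstrap via \eqref{c7} using the scaling identity $C y^{-\gamma}\wedge 1=(C^{-1/\gamma}y)^{-\gamma}\wedge 1$ to absorb the multiplicative constant into a rescaling of $\|t\|$, iterated until the exponent exceeds any given $\beta$, followed by Fourier inversion. The paper packages the bootstrap step as a separate lemma (\cref{lem_c7_p}) phrased in terms of $\psi_r(x)=\sup_{\|t\|\geq x}|\phi_r(t)|$ and its recursion from \cref{rec_char_fct}, but that is only a notational difference from your direct manipulation of $|\phi_r|$.
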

This results can be extended to yield a Schwartz density as follows:

\begin{lem}\label{lem_Schwartz}
Let $X$ be a $\R^d$-valued random variable with characteristic function $\phi$. 
Assume $\E[\|X\|^p]<\infty$ for all $p>0$. Moreover, assume for all $\beta>0$ that
\begin{align*}
|\phi(t)|=\bo\left( \|t\|^{-\beta}\right)\text{ as } \|t\|\rightarrow\infty.
\end{align*}
 Then $X$ admits a Schwartz density (see Definition \ref{defSchwartzdens}).
\end{lem}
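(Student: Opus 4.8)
The plan is to show that $\phi$ itself is a Schwartz function and then invoke the fact (already recalled above, \citep[Theorem 7.4(d)]{Ru91}) that the Fourier transform preserves the Schwartz space: the inverse Fourier transform of $\phi$ is then a Schwartz density of $X$. So everything reduces to verifying the two defining properties of \cref{def_Schwartz} for $\phi$: infinite differentiability, and rapid decay of $\phi$ together with all its partial derivatives.

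First I would establish $\phi\in\mathcal{C}^\infty(\R^d)$. Since $\E[\|X\|^p]<\infty$ for every $p>0$, in particular all moments $\E[|X_1|^{\beta_1}\cdots|X_d|^{\beta_d}]$ are finite, and the standard differentiation-under-the-integral argument (dominated convergence, with dominating function $\|X\|^{|\beta|}$) gives that $\phi$ is infinitely differentiable with
\begin{align*}
D^\beta \phi(t) = \E\bigl[(iX_1)^{\beta_1}\cdots(iX_d)^{\beta_d}\,e^{i\langle t,X\rangle}\bigr],\qquad \beta\in\N_0^d.
\end{align*}
This handles condition (i) and also produces a convenient integral representation of each derivative.

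The substance is condition (ii): $\sup_{t}\|t\|^\alpha |D^\beta\phi(t)|<\infty$ for all $\alpha\in\N$, $\beta\in\N_0^d$. The idea is to write $D^\beta\phi$ as the characteristic function of a finite (complex) measure with a polynomial density against $\mathcal{L}(X)$ — namely the measure $\nu_\beta$ with $d\nu_\beta = (iX_1)^{\beta_1}\cdots(iX_d)^{\beta_d}\,d\mathcal{L}(X)$, which has finite total variation because all moments are finite — and then show that the hypothesis $|\phi(t)|=\bo(\|t\|^{-\gamma})$ for every $\gamma$ transfers to $\widehat{\nu_\beta}=D^\beta\phi$. One clean way: $\|t\|^\alpha \lesssim (1+t_1^2+\cdots+t_d^2)^{\alpha/2}$, and $(1+t_1^2+\cdots+t_d^2)^N\,D^\beta\phi(t)$ is, up to constants, the Fourier transform of $(\Id - \Delta)^N$ applied (in the distributional sense) to the density of $\nu_\beta$; more elementarily, multiplying $\widehat{\nu_\beta}$ by a monomial $t^\gamma$ corresponds to differentiating $\nu_\beta$, which one avoids by instead bounding $\|t\|^\alpha|D^\beta\phi(t)|$ directly. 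Concretely I would fix $\alpha,\beta$, pick coordinates, and estimate $t_k^\alpha D^\beta\phi(t)$ via integration by parts in the variable $x_k$ against the (a priori only assumed finite-moment, not smooth) law of $X$ — this is where care is needed, since $\mathcal{L}(X)$ need not have a density a priori.

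To sidestep the regularity issue, the cleanest route is a mollification/bootstrap: the hypothesis $|\phi(t)| = \bo(\|t\|^{-\beta})$ for all $\beta$ already forces $\phi\in L^1(\R^d)$ (take $\beta = d+1$), so $X$ has a bounded continuous density $f = \widehat{\phi}^{\,\vee}$ by Fourier inversion; moreover each $D^\beta\phi\in L^1$ by the same decay bound (the moment weights only multiply $\phi$ by a fixed polynomial, and $|D^\beta\phi|\le \E[\|X\|^{|\beta|}]$ is bounded while also, by a more careful argument, inherits fast decay — this is the crux). Granting that all $D^\beta\phi\in L^1$, Fourier inversion shows $x^\alpha f(x)$ is bounded for all $\alpha$, and applying the same reasoning with $D^\beta\phi$ in place of $\phi$ shows all derivatives of $f$ decay rapidly, i.e. $f$ is Schwartz. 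I expect the main obstacle to be exactly the step asserting that the rapid decay of $\phi$ is inherited by its derivatives $D^\beta\phi$: one must combine the pointwise bound $|D^\beta\phi(t)|\le\E[\|X\|^{|\beta|}]<\infty$ with the decay of $\phi$ itself, perhaps via an interpolation/Landau–Kolmogorov-type inequality relating $\sup|D^\beta\phi|$, $\sup|D^{\beta'}\phi|$ and $\sup|\phi|$ on balls, or by the Fourier-side argument above (polynomial $\times$ density is again a finite measure whose transform we control). Once that inheritance is in hand, the rest is the routine Fourier-inversion bookkeeping.
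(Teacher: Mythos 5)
Your overall strategy matches the paper's: reduce to showing that $\phi$ itself is a Schwartz function (so that Fourier inversion produces a Schwartz density), obtain $\phi\in\mathcal{C}^\infty(\R^d)$ with $|D^\beta\phi|\le\E[\|X\|^{|\beta|}]$ from the moment hypothesis, use the decay hypothesis for $\beta={\bf 0}$, and then try to interpolate these two boundary cases to get rapid decay of every $D^\beta\phi$. You also correctly diagnose where the whole proof lives: the transfer of rapid decay from $\phi$ to its partial derivatives.

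However, that transfer step is precisely the part you leave unresolved, and it is a genuine gap. You mention three possible routes, but none is carried through. Integration by parts against $\mathcal{L}(X)$ needs regularity you don't yet have, as you notice. The mollification/bootstrap route is circular as stated: to conclude that $x^\alpha D^\gamma f(x)$ is bounded for all $\alpha,\gamma$, you need to control $D^\beta\bigl(t^\gamma\phi(t)\bigr)$ in $L^1$, which by Leibniz reduces to exactly the decay of $t^{\gamma'}D^{\beta'}\phi$ that you are trying to prove. The third route you float — an interpolation inequality of Landau--Kolmogorov type relating $\sup|g|$, $\sup|\partial^2 g|$ and $\sup|\partial g|$ — is in fact the tool the paper uses, but you neither state the precise inequality nor prove it, and without it the argument does not close. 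Concretely, the paper proves (\cref{calc_lemma}, following Fill and Janson): if $|g(t)|\le a\|t\|^{-p}$ and $|\partial_{t_j}^2 g(t)|\le b$ for $t\neq{\bf 0}$, then $|\partial_{t_j} g(t)|\le 2\sqrt{ab}\,\|t\|^{-p/2}$. With this in hand, one inducts on $|\beta|$: applying the inequality to $g=D^\beta\phi$ with $a\|t\|^{-p}$ from the induction hypothesis (valid for all $p$) and $b=\E[\|X\|^{|\beta|+2}]$ from the moment bound gives $|D^{\beta+e_j}\phi(t)|=\bo(\|t\|^{-p/2})$ for all $p$, i.e.\ arbitrary polynomial decay at the next level. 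Your proposal would be complete if you stated and proved this one-dimensional interpolation lemma and then spelled out this induction; as written, the load-bearing step is only gestured at.
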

\begin{rem} R{\"o}sler \citep{Roe92} analyzes stochastic fixed point equations for real valued random variables (i.e.~$d=1$ and $m=1$ in \eqref{main_rec}) regarding exponential moments. 
In particular, he states \citep[Theorem 6]{Roe92} sufficient conditions on \eqref{main_rec} for finite exponential moments.
\end{rem}

\begin{rem}\label{embed_complex} Note that the results in this section can also be applied to $\C$-valued random variables by embedding them into $\R^2$ in the canonical way.
Since some of our applications involve complex valued random variables, 
we briefly formalize this embedding. 

Let $Y_1,\ldots,Y_m$ be $\C$-valued random variables. Assume for every $r\in[m]$ the existence of 
 a function  $\ell_r:[k_r]\rightarrow\N$ and 
$\C$-valued random variables  $V_{r,1},V_{r,2},\ldots$, $B_r$ such that:
\begin{align}\label{s_main_rec}
 Y_r\stackrel{d}{=} \sum_{j=1}^{\infty} V_{r,j} Y_{\ell_r(j)}^{(j)}+B_r,
\end{align}
where $Y_{\ell_r(1)}^{(1)},Y_{\ell_r(2)}^{(2)},\ldots$ and $((V_{r,j})_{j\geq 1},B_r)$ are independent; and $Y_{\ell_r(j)}^{(j)}$ has the same distribution as $Y_{\ell_r(j)}$ for $j\geq 1$.

Then $Y_1,\ldots,Y_m$ can be embedded into \eqref{main_rec} with $d=2$ by letting
\begin{align}\label{embed_C_R2}
X_r:=\begin{pmatrix}\Re(Y_r) \\ \Im(Y_r) \end{pmatrix},\quad A_{r,j}:=\begin{pmatrix} \Re(V_{r,j}) & -\Im(V_{r,j}) \\ \Im(V_{r,j}) & \Re(V_{r,j}) \end{pmatrix},\quad b_r:=\begin{pmatrix} \Re(B_r) \\ \Im(B_r)\end{pmatrix}.
\end{align}
Note that $\| A_{r,j}^T\|_{\mathrm{op}}$ and $\alpha_{r,j}$  in \eqref{def_alpha_ij} are both equal to $|V_{r,j}|$. 
\end{rem}
\begin{rem}
	We end the section with a discussion on the assumption $\E[\|X_r\|^p]<\infty$ in \cref{thm_A_conds}. This assumption implies\footnote{More details can be found in \cref{sec_proofs}} that all derivatives of $\phi_r$ are bounded. In combination with the tail bounds in \cref{coro_smooth_density}, an argument based on Fill and Janson \cite{fiJa00} (cf.~\cref{calc_lemma}) yields that $\phi_r$ (and also $f_r$) is a Schwartz function. Now suppose we relax the moment condition, such that, for some $p_0\in\N$,
	\begin{align*}
	\E[\|X_r\|^p]<\infty\quad\text{for all }p\leq p_0\text{ and }r\in[m].
	\end{align*}
	Similar arguments as in the proof of \cref{thm_A_conds} yield (using \cref{calc_lemma}) that for all $\alpha>0$ and $\beta:=(\beta_1,\ldots,\beta_d)\in\N_0^d$ with $\sum_j \beta_j \leq p_0-1$,
	\begin{align*}
	|D^\beta \phi_r (t)|=\bo\left(\|t\|^{-\alpha}\right).
	\end{align*}
	Now recall $x^\beta=x_1^{\beta_1}\cdots x_d^{\beta_2}$ for $x\in\R^d$.
	Note that (cf., e.g., \cite[Theorem 7.4 (c)]{Ru91})
	\begin{align}\label{rem312_a}
	x^\beta f(x)=\frac 1 {(2\pi)^d} \int_{\R^d} \mathrm{e}^{-i\langle t ,x \rangle} D^\beta\phi_r(t)\mathrm{d}\lambda^d(t),
	\end{align}
	where $\lambda^d$ denotes the Lebesgue measure on $\R^d$.
	Hence 
	\begin{align*}
	|x^\beta f(x)|=\bo(1)\quad \text{for every $\beta$ with $\sum_j\beta_j\leq p_0-1$.}
	\end{align*}
	More generally, using, e.g., the Dominated Convergence Theorem and 
	 \eqref{rem312_a},
	 \begin{align*}
	 x^\beta D^\gamma f(x)=\frac 1 {(2\pi)^d} \int_{\R^d} (-i)^{\sum_j \gamma_j} t^\gamma \mathrm{e}^{-i\langle t ,x \rangle} D^\beta\phi_r(t)\mathrm{d}\lambda^d(t)\quad \text{for $\gamma\in\N_0^d$.}
	 \end{align*}
	 Thus also $|x^\beta D^\gamma f(x)|=\bo(1)$ for every $\beta,\gamma\in\N_0^d$ with $\sum_j \beta_j\leq p_0-1$ (Note that the constant in the $\bo(1)$ term may depend on $\gamma$).
\end{rem}

\section{Applications}\label{sec:motivation}

This section contains examples of limits given by stochastic fixed point equations. Checking Conditions \eqref{s_cond_2}-\eqref{s_cond_6} for real valued random variables ($d=1$) is usually straightforward and details in most examples are left to the reader.\\

\noindent
{\bf 1.~Quicksort.} The {\it Quicksort} algorithm was introduced by Hoare \citep{Hoa62} in 1962. This algorithms sorts a list by choosing a {\it pivot} element among its elements and subdividing the list into two parts: one containing the elements smaller than the pivot, the other containing the elements larger than the pivot. The algorithm then is recursively applied to both parts.
If the {\it pivot } is chosen uniformly at random in the list (or if the input is considered to be random), the total number of key comparisons, properly rescaled, converges to some limit $X$ almost surely as the number of keys tends to infinity. This convergence was first proven with martingale techniques \citep{Reg89} without specifying the limit $X$. 
With R{\"o}sler's contraction method the distribution of $X$ can be characterized as 
a solution to the following stochastic fixed point equation \citep{Roe91}:
\begin{align}\label{DE_Quicksort}
X\stackrel{d}{=} UX^{(1)}+(1-U) X^{(2)}+g(U),
\end{align}
where $\stackrel{d}{=}$ denotes that both sides have the same distribution; $X^{(1)}, X^{(2)}$ and $U$ are independent; $X^{(0)}$ and $X^{(1)}$ have the same distribution as $X$; $U$ is uniformly distributed on $(0,1)$; and $g(u):=2u\log u + 2(1-u)\log(1-u)+1$. 

Based on this distributional equation, Fill and Janson show that $X$ admits a Schwartz density \citep[Theorem 3.1]{fiJa00}. 
With \cref{thm_A_conds} we can extend this result to a bivariate limit:

The joint distribution of key comparisons and key exchanges performed by Quicksort\footnote{when the input is assumed to be a random permutation} also converges to a limit $(X,Y)$ that can be described by the following stochastic fixed point equation \citep{Ne01}:
\begin{align}\label{rec_QS_bivariate}
\begin{pmatrix} X \\ Y \end{pmatrix}\stackrel{d}{=} \begin{pmatrix} U & 0\\ 0 & U \end{pmatrix} \begin{pmatrix} X^{(1)} \\ Y^{(1)} \end{pmatrix} + \begin{pmatrix} 1-U & 0 \\ 0 & 1-U\end{pmatrix} \begin{pmatrix} X^{(2)} \\ Y^{(2)} \end{pmatrix} + g_2(U)
\end{align}
where $(X^{(1)},Y^{(1)}), (X^{(2)},Y^{(2)})$ and $U$ are independent; $(X^{(j)},Y^{(j)})$ has the same distribution as $(X,Y)$; $U$ is uniformly distributed on $(0,1)$; and 
\begin{align*}
g_2(U):=\left(2 U  \log U +2(1-U)\log(1-U)\right)\begin{pmatrix} 2 \\ 1/3\end{pmatrix}+\begin{pmatrix} 1 \\ U(1-U) \end{pmatrix}.
\end{align*}

To the best of this author's knowledge, the resulting limit $(X,Y)$ has not been studied so far. \cref{thm_A_conds} yields the following:
\begin{thm}\label{thm_Quicksort_bivariate}
	Let $(X,Y)$ be a solution to \eqref{rec_QS_bivariate}. Then $(X,Y)$ admits a bounded density $f\in\mathcal{C}^\infty(\R^2)$.
\end{thm}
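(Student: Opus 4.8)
The plan is to deduce the theorem from \cref{thm_A_conds} applied with $m=1$ and $d=2$, so the whole task is to check that \eqref{rec_QS_bivariate} satisfies Conditions \eqref{s_cond_2}--\eqref{s_cond_6}. First I would rewrite \eqref{rec_QS_bivariate} in the form \eqref{main_rec}: here $\ell_1(1)=\ell_1(2)=1$, $A_{1,1}=U\,\Id$, $A_{1,2}=(1-U)\,\Id$, $A_{1,j}={\bf 0}$ for $j\ge 3$, and $b_1=g_2(U)$. Only two coefficients are nonzero, so the standing assumption \eqref{cond_Sa} holds and the series is a genuine two-term sum. Since $U\,\Id$ and $(1-U)\,\Id$ are scalar multiples of the identity, the quantities in \eqref{def_alpha_ij} are $\alpha_{1,1}=\|A_{1,1}^T\|_{\mathrm{op}}=U$, $\alpha_{1,2}=\|A_{1,2}^T\|_{\mathrm{op}}=1-U$, and $\alpha_{1,j}=0$ for $j\ge3$; hence $\amax_1=U\vee(1-U)$ and $\asec_1=U\wedge(1-U)$.

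Four of the five conditions are then immediate. Since $U\vee(1-U)\ge\tfrac{1}{2}$ surely, \eqref{s_cond_2} holds with $a=\tfrac12$. For $x>0$ one has $\Prob(\asec_1\le x)=\Prob(U\le x\text{ or }U\ge1-x)\le 2x$, so \eqref{s_cond_3} holds with $\lambda=2$, $\nu=1$. The operator norms $\|A_{1,j}^T\|_{\mathrm{op}}$ all lie in $[0,1]$, giving \eqref{s_cond_4}. Finally, for $j=1$ we have $\alpha_{1,1}=\|A_{1,1}^T\|_{\mathrm{op}}=U\in(0,1)$ almost surely, so $N_1((0,1))\ge 1$ a.s.\ and \eqref{s_cond_6} holds.

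The substantive point is \eqref{s_cond_5}: I would show $\Supp((X,Y))$ is in general position by contradiction. If it were not, then by \cref{defSuppGenPos} the support would lie on an affine line, i.e.\ $bX+aY=c$ almost surely for some $(a,b)\in\R^2\setminus\{{\bf 0}\}$ and some $c\in\R$. Since $(X^{(i)},Y^{(i)})\stackrel{d}{=}(X,Y)$ the same identity holds for both copies, so applying the linear functional $(x,y)\mapsto bx+ay$ to \eqref{rec_QS_bivariate} collapses the two scaled terms to $Uc+(1-U)c=c$ and forces
\begin{align*}
\bigl(2U\log U+2(1-U)\log(1-U)\bigr)\bigl(2b+\tfrac{a}{3}\bigr)+b+aU(1-U)=0\qquad\text{a.s.}
\end{align*}
Reading this as an identity for $u\in(0,1)$: if $2b+\tfrac{a}{3}\neq0$ it would express the function $u\mapsto 2u\log u+2(1-u)\log(1-u)$ (whose derivative is unbounded as $u\downarrow 0$) as a quadratic polynomial in $u$, which is impossible; hence $2b+\tfrac{a}{3}=0$, and then $b+au(1-u)\equiv 0$ on $(0,1)$ forces $a=b=0$, contradicting $(a,b)\neq{\bf 0}$. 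This verifies \eqref{s_cond_5}, and \cref{thm_A_conds} then yields a bounded density $f\in\mathcal{C}^\infty(\R^2)$ for $(X,Y)$.

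The only step that is more than routine bookkeeping is \eqref{s_cond_5}, and there the one point to get right is the elementary but slightly delicate claim that a transcendental function and a polynomial cannot agree on an interval, which I would settle via the derivative-blow-up observation at $u\downarrow 0$. (If one additionally uses that all moments of $(X,Y)$ are finite, which is known for Quicksort, the same argument combined with the second half of \cref{thm_A_conds} would upgrade $f$ to a Schwartz density, but that is beyond what the statement asks.)
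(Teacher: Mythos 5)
Your verification of conditions \eqref{s_cond_2}, \eqref{s_cond_3}, \eqref{s_cond_4}, \eqref{s_cond_6} matches the (omitted) computations in the paper, and the whole proof is correct. The one place you genuinely diverge from the paper is \eqref{s_cond_5}. The paper proves general position \emph{constructively}: starting from any $(x,y)\in\Supp((X,Y))$ and taking both independent copies in \eqref{rec_QS_bivariate} equal to $(x,y)$, it observes that $(x,y)^T+g_2(u)\in\Supp((X,Y))$ for every $u\in[0,1]$, and then simply checks that $g_2(0)=(1,0)^T$ and $g_2(1/2)=(1-4\log 2,\,1/4-\tfrac{2}{3}\log 2)^T$ are linearly independent (the second coordinate of $g_2(1/2)$ being nonzero). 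You instead argue by contradiction: supposing the support lies on an affine line $bX+aY=c$, you push the linear functional through \eqref{rec_QS_bivariate}, obtain the pointwise identity $\bigl(2u\log u+2(1-u)\log(1-u)\bigr)\bigl(2b+\tfrac{a}{3}\bigr)+b+au(1-u)\equiv 0$ on $(0,1)$, and rule it out via the derivative blow-up of the entropy term at $u\downarrow 0$. The paper's route is shorter once one has the right observation (take both copies equal, exhibit two explicit $u$-values, check a $2\times 2$ determinant). Your route avoids any explicit numerical computation, is robust to perturbations of $g_2$ (it would work unchanged for any $g_2$ whose entropy coefficient vector and quadratic remainder are not proportional), and makes the mechanism clearer: the theorem fails precisely when the additive term $g_2$ is itself confined to a line, which is the degeneracy already flagged in \cref{rem_sec_cond_split} for the split-tree example. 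Either is a complete proof of \eqref{s_cond_5}.
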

\begin{rem}
	\cref{thm_A_conds} also yields that the density $f$ above is a Schwartz function if $\E[|X|^p]<\infty$ and $\E[|Y|^p]<\infty$ for all $p\geq 1$. $X$ is known to have a finite moment generating function, but a corresponding result for $Y$ has not be proven yet. We leave it as an open problem whether all moments of $Y$ are finite.  
\end{rem}
\begin{proof}[Proof of \cref{thm_Quicksort_bivariate}] The assertion follows from \cref{thm_A_conds} if \eqref{s_cond_2}-\eqref{s_cond_6} hold.
	First note that in the case of the bivariate Quicksort limit
	\begin{align*}
	\amax_1=\max\{U,1-U\},\qquad \asec_1=\min\{U,1-U\}.
	\end{align*}
	Conditions \eqref{s_cond_2},\eqref{s_cond_3},\eqref{s_cond_4} and \eqref{s_cond_6} can be verified easily and are left to the reader. For \eqref{s_cond_5} let $(x,y)$ be an arbitrary element of $\Supp((X,Y))$. Then, by \eqref{rec_QS_bivariate},
	\begin{align*}
	\left\{\begin{pmatrix} x \\ y \end{pmatrix}+ g_2(u) : u\in [0,1]\right\}\subset \Supp\left(\begin{pmatrix} X \\ Y \end{pmatrix}\right).
	\end{align*}
	The points $(x,y)^T$, $(x,y)^T+g_2(0)$ and $(x,y)^T +g_2(1/2)$ are in general position. Therefore \eqref{s_cond_5} holds and \cref{thm_A_conds} yields the assertion.
\end{proof}

\noindent
{\bf 2. Large P{\'o}lya Urns.} Consider an urn process with balls of $q$ different colors labeled $1,\ldots,q$. The process evolves in discrete time. 
Let $X_{n,j}$ denote the number of balls of color $j$ in the urn at time step $n$.
Given an initial composition $(X_{0,1},\ldots,X_{0,q})$ with at least one ball and a replacement matrix $(\xi_{i,j})_{i,j=1,\ldots,q}$ (each of them can be deterministic or random) the urn evolves in time as follows. Let $X_n=(X_{n,1},\ldots,X_{n,q})$ be the current composition of balls in the urn. Draw a ball form the urn uniformly at random and denote its color by $I$. Then $X_{n+1}:= X_n+(\xi_{I,1}^\prime,\ldots,\xi_{I,q}^\prime)$ where $(\xi_{i,j}^\prime)_{i,j=1,\ldots,q}$ is an independent copy of the replacement matrix $(\xi_{i,j})_{i,j=1,\ldots,q}$ (and also independent from $X_n$). A replacement rule is called random, if $(\xi_{i,j})_{i,j=1,\ldots,q}$ has a non-degenerate distribution, and deterministic otherwise.

The literature on P{\'o}lya urns is vast. We mainly focus on the results
derived by the contraction method \citep{KnNe14,ChMaPou13,Ma15} since such limit laws are given by stochastic fixed point equations. For more information on P{\'o}lya urns see, e.g., the monographs of Johnson and Kotz \citep{JoKo77},
Mahmoud \citep{Ma09},  the papers of
Janson \citep{Ja04}, Flajolet, Gabarr{\'o} and Pekari \citep{FlGaPe05}, and Pouyanne \citep{Pou08}, as well as the references therein.\\

\noindent
{\bf 2.1.~Large Urns with Two Colors and Deterministic Replacement.} Consider a P{\'o}lya urn with $2$ colors and replacement rule (cf.~\citep[Section 6.1]{KnNe14})
\begin{align*}
(\xi_{i,j})_{i,j\in\{1,2\}}=\begin{pmatrix} a & b \\ c & d \end{pmatrix}
\end{align*}
with constants $a,b,c,d\in\N_0$. Assume $a+b=c+d$ and $bc>0$. Let $K:=a+b+1$ and $\lambda:=(a-c)/(a+b)$. Let $(X_{n}^{[1]})_{n\geq 0}$ denote the urn process with that replacement rule and initial configuration $X_{0}^{[1]}=(1,0)$. Similarly, let $(X_{n}^{[2]})_{n\geq 0}$ be the process with initial configuration $X_{0}^{[2]}=(0,1)$. Then, if $\lambda>1/2$, almost surely and in $L_p$ for all $p\geq 1$ \cite{Ja04,Pou08},
\begin{align}\label{lim_det_repl}
\frac{X_{n,1}^{[1]}-\E[X_{n,1}^{[1]}]}{n^\lambda} \longrightarrow X_1,\qquad
\frac{X_{n,1}^{[2]}-\E[X_{n,1}^{[2]}]}{n^\lambda} \longrightarrow X_2.
\end{align}
The distributions of $X_1$ and $X_2$ are the unique pair of distributions with $\E[X_1]=\E[X_2]=0$, finite variance, and \citep[Theorem 6.1]{KnNe14} 
\begin{equation}\label{rec_det_repl}
\begin{aligned}
X_1\stackrel{d}{=}\sum_{j=1}^{a+1} D_j^\lambda X_1^{(j)} + \sum_{j=a+2}^{K} D_j^\lambda X_2^{(j)} +b_1({\bf D}),\\
X_2\stackrel{d}{=}\sum_{j=1}^c D_j^\lambda X_1^{(j)} + \sum_{j=c+1}^K D_j^\lambda X_2^{(j)}+b_2({\bf D})
\end{aligned}
\end{equation}
where 
\begin{itemize}
	\item  $X_1^{(1)},\ldots X_1^{(K)},X_2^{(1)},\ldots,X_2^{(K)}$ and ${\bf D}:=(D_1,\ldots,D_K)$ are independent; 
	\item $X_i^{(j)}\stackrel{d}{=} X_i$ for all $i,j$;
	\item ${\bf D}$ has the $\mathrm{Dirichlet}\left(\frac 1 {K-1},\ldots, \frac 1 {K-1}\right)$ distribution;
	\item $b_1$ and $b_2$ are some deterministic function with $\Prob(b_j({\bf D})\neq 0)>0$ (the explicit functions can be found in  \citep[Theorem 6.1]{KnNe14}).
\end{itemize} 
\begin{rem}
	Note that the distributions of $X_1$ and $X_2$ are non-degenerate (and thus condition \eqref{s_cond_5} is fulfilled), since the only degenerate, centred solution would be the constant $0$, which leads to a contradiction to $\Prob(b_j({\bf D})\neq 0)>0$.
\end{rem}
Chauvin, Mailler and Pouyanne \cite{ChMaPou13} analyze the limit distributions $X_1$ and $X_2$ in \eqref{lim_det_repl} regarding absolute continuity and finiteness of moments. In particular, they show that $X_1$ and $X_2$ admit bounded, continuous densities. \cref{thm_A_conds} yields the following:
\begin{thm}\label{thm_det_repl}
	The limits $X_1$ and $X_2$ in \eqref{lim_det_repl} admit Schwartz densities.
\end{thm}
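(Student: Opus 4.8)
The plan is to deduce \cref{thm_det_repl} as an application of \cref{thm_A_conds}, so the task reduces to verifying that the system \eqref{rec_det_repl} satisfies conditions \eqref{s_cond_2}--\eqref{s_cond_6}, together with the moment hypothesis needed for the Schwartz conclusion. Here $m=2$, $d=1$, and the coefficients are $A_{r,j}=D_j^\lambda$ where $\mathbf{D}=(D_1,\dots,D_K)$ is Dirichlet with all parameters $1/(K-1)$. Since $d=1$ and the coefficients are nonnegative, $\alpha_{r,j}=\|A_{r,j}^T\|_{\mathrm{op}}=D_j^\lambda$ for $j\le K$ and $\alpha_{r,j}=0$ for $j>K$; in particular $N_r(\mathcal{I})$ just counts the number of $D_j^\lambda$ lying in $\mathcal{I}$, and $\amax_r,\asec_r$ are the two largest among $D_1^\lambda,\dots,D_K^\lambda$. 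Also \eqref{cond_Sa} holds trivially because only finitely many $A_{r,j}$ are nonzero, so the almost-sure convergence assumption is automatic.

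Next I would check the five conditions one by one. For \eqref{s_cond_2}: since $D_1+\dots+D_K=1$, the largest $D_j$ is at least $1/K$, so $\amax_r\ge K^{-\lambda}=:a>0$ almost surely. For \eqref{s_cond_4}: each $D_j\le 1$ and $\lambda<1$ (as $\lambda=(a-c)/(a+b)\le 1$, with the relevant range $1/2<\lambda<1$ for a genuine large urn), hence $D_j^\lambda\le 1$, giving $\Prob(\|A_{r,j}^T\|_{\mathrm{op}}\le 1)=1$. For \eqref{s_cond_3}: the second-largest $D_j$ is small only if all but one coordinate is small; a Dirichlet density with parameters $1/(K-1)<1$ is integrable, and a direct estimate (or a crude union bound over which coordinate is the maximum, together with the marginal behaviour of $D_j$ near $0$) gives $\Prob(\asec_r\le x)=\Prob(\text{second largest }D_j^\lambda\le x)\le\lambda' x^{\nu}$ for suitable $\nu>0$; concretely one can take the event $\{\asec_r\le x\}$ and bound it by $\sum_{i}\Prob(D_j\le x^{1/\lambda}\ \forall j\ne i)$, each term being $\bo(x^{(K-1)/((K-1)\lambda)})=\bo(x^{1/\lambda})$ so $\nu=1/\lambda$ works. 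For \eqref{s_cond_6} with $\mathcal{I}=(0,1)$: with positive probability every $D_j\in(0,1)$ (indeed this holds almost surely since $K\ge 2$), so $N_r((0,1))=K\ge 1$ with probability $1>0$. Finally \eqref{s_cond_5} is exactly the content of the remark preceding the theorem: the unique centred degenerate solution would be the constant $0$, contradicting $\Prob(b_j(\mathbf{D})\ne 0)>0$, so the distributions of $X_1,X_2$ are non-degenerate, which for $d=1$ is precisely general position.

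For the Schwartz conclusion I also need $\E[|X_r|^p]<\infty$ for all $p>0$; this is known for these two-colour deterministic-replacement limits (finiteness of all moments, indeed of a moment generating function in a neighbourhood of $0$, is established in \cite{ChMaPou13}), so one may simply quote it. With all of \eqref{s_cond_2}--\eqref{s_cond_6} verified and all moments finite, \cref{thm_A_conds} immediately gives that $X_1$ and $X_2$ admit Schwartz densities.

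The only step with any real content is \eqref{s_cond_3}, the polynomial bound on the lower tail of the second-largest coordinate of a Dirichlet vector; everything else is essentially a direct reading-off of the coefficient structure. I expect this to be routine given the explicit Dirichlet density, so the main ``obstacle'' is really just bookkeeping — making sure the reduction to \cref{thm_A_conds} is stated cleanly and that the general-position and moment inputs are correctly attributed to the existing literature rather than re-proved.
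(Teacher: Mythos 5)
Your proposal is correct and takes essentially the same route as the paper: reduce to \cref{thm_A_conds} by verifying \eqref{s_cond_2}--\eqref{s_cond_6} for the Dirichlet coefficients and quoting \cite{ChMaPou13} for finiteness of all moments. The paper simply states that checking the conditions is left to the reader, so your computations (in particular the lower-tail bound on the second-largest Dirichlet coordinate giving $\nu=1/\lambda$ in \eqref{s_cond_3}) supply exactly the omitted details.
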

\begin{proof}
	In this example $m=2$, $\amax_r$ and $\asec_r$ are the two largest elements of $(D_j^\lambda)_{j\in [K]}$, where $(D_1,\ldots,D_K)$ has a Dirichlet distribution.
	Checking \eqref{s_cond_2}-\eqref{s_cond_6} is left to the reader. The additional integrability condition to obtain a Schwartz density holds by \cite{ChMaPou13}.
\end{proof}

\noindent
{\bf 2.2.~A Random Replacement Urn.} Consider a P{\'o}lya urn with $2$ colors and replacement rule (cf.~\citep[Section 6.2]{KnNe14})
\begin{align*}
(\xi_{i,j})_{i,j\in\{1,2\}}=\begin{pmatrix} F_{p_1} & 1-F_{p_1} \\ 1-F_{p_2} & F_{p_2} \end{pmatrix}
\end{align*}
where $\Prob(F_x = 1)=x =1-\Prob(F_x=0)$ for $x\in\{p_1,p_2\}$. Let $(X_{n}^{[1]})_{n\geq 0}$ denote the urn process with that replacement rule and initial configuration $X_{0}^{[1]}=(1,0)$. Similarly, let $(X_{n}^{[2]})_{n\geq 0}$ be the process with initial configuration $X_{0}^{[2]}=(0,1)$. Finally, let $\lambda=p_1+p_2-1$. Then, if $1/2<\lambda<1$, almost surely and in $L_p$ for all $p\geq 1$ \cite{Ja04}, 
\begin{align}\label{lims_rand_repl_urn}
\frac{X_{n,1}^{[1]}-\E\left[X_{n,1}^{[1]}\right]}{n^\lambda}\longrightarrow X_1,\qquad \frac{X_{n,1}^{[2]}-\E\left[X_{n,1}^{[2]}\right]}{n^\lambda}\longrightarrow X_2
\end{align}
with limiting distributions that can be characterized \citep[Theorem 6.4]{KnNe14} as the unique pair of distributions having finite second moments, $\E[X_1]=\E[X_2]=0$, and satisfying 
\begin{equation}\label{DE_random_repl_urn}
\begin{aligned}
X_1\stackrel{d}{=} U^\lambda X_1^{(1)}+ F_{p_1} (1-U)^\lambda X_1^{(2)}+ (1-F_{p_1})(1-U)^\lambda X_2 +b_1^\prime(U,F_{p_1}),\\
X_2\stackrel{d}{=} U^\lambda X_2^{(1)}+ F_{p_2} (1-U)^\lambda X_2^{(2)}+ (1-F_{p_2})(1-U)^\lambda X_1 +b_2^\prime(U,F_{p_2}),
\end{aligned}
\end{equation}
where
\begin{itemize}
	\item $X_1,X_2, X_1^{(1)},X_1^{(2)},X_2^{(1)},X_2^{(2)}, F_{p_1}, F_{p_2}$ and $U$ are independent;
	\item $X_i^{(j)}\stackrel{d}{=} X_i$ for all $i,j$;
	\item $U$ is uniformly distributed on $[0,1]$; 
	\item $b_1^\prime$ and $b_2^\prime$ are some deterministic functions with $\Prob(b_j^\prime(U,F_{p_j})\neq 0)>0$ (the explicit functions can be found in \citep[Theorem 6.4]{KnNe14})
\end{itemize}
To the best of this author's knowledge the distributions of $X_1$ and $X_2$ have not been studied in the literature. \cref{thm_A_conds} yields the following:
\begin{thm}\label{thm_random_replacement}
	The limits $X_1$ and $X_2$ in \eqref{lims_rand_repl_urn} admit bounded densities  $f_1,f_2\in\mathcal{C}^\infty(\R)$.
\end{thm}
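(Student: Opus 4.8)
The plan is to verify that the system \eqref{DE_random_repl_urn} falls under the hypotheses of \cref{thm_A_conds}, so that the existence of bounded $\mathcal{C}^\infty$ densities follows immediately. We are in the case $m=2$, $d=1$, so all the coefficients are scalars and we simply have $\alpha_{r,j}=\|A_{r,j}^T\|_{\mathrm{op}}=|A_{r,j}|$. Reading off the three coefficients in each equation, for $r=1$ the multipliers are $U^\lambda$, $F_{p_1}(1-U)^\lambda$, $(1-F_{p_1})(1-U)^\lambda$, and symmetrically for $r=2$; since $F_{p_j}\in\{0,1\}$, exactly one of the last two coefficients is $(1-U)^\lambda$ and the other is $0$. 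Hence $\{\amax_r,\asec_r\}=\{U^\lambda,(1-U)^\lambda\}$ almost surely, i.e. $\amax_r=(\max\{U,1-U\})^\lambda$ and $\asec_r=(\min\{U,1-U\})^\lambda$, which is the same pair of order statistics (up to the power $\lambda$) that appears in the Quicksort example.

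With this identification the conditions \eqref{s_cond_2}--\eqref{s_cond_4} and \eqref{s_cond_6} are essentially immediate: $\amax_r\geq (1/2)^\lambda>0$ deterministically giving \eqref{s_cond_2}; for \eqref{s_cond_3} we have $\Prob(\asec_r\leq x)=\Prob(\min\{U,1-U\}\leq x^{1/\lambda})\leq 2x^{1/\lambda}$, so \eqref{s_cond_3} holds with $\nu=1/\lambda$; since $0<\lambda<1$ we have $(1-U)^\lambda\leq 1$ and $U^\lambda\leq 1$ almost surely, which is \eqref{s_cond_4}; and for \eqref{s_cond_6} we need $\alpha_{r,j}$ and $|A_{r,j}|$ simultaneously in $(0,1)$ for some $j$ with positive probability, which holds because (for $r=1$, $F_{p_1}=1$, $0<U<1$, $U\neq 1/2$) the coefficient $(1-U)^\lambda\in(0,1)$ occurs with positive probability. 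The one condition requiring a separate argument is \eqref{s_cond_5}, but this is already supplied by the remark preceding the statement of \cref{thm_random_replacement}: the centred solution pair $(X_1,X_2)$ is non-degenerate because the only degenerate centred solution would be $0$, contradicting $\Prob(b_j'(U,F_{p_j})\neq 0)>0$; in dimension $d=1$ non-degeneracy of $X_r$ is exactly the general-position condition.

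The main (and only genuine) obstacle is therefore not in checking the five conditions — which are routine — but in confirming the degeneracy argument for \eqref{s_cond_5} rigorously, i.e. that if $X_1\equiv c_1$ and $X_2\equiv c_2$ were constants solving \eqref{DE_random_repl_urn} with $c_1=c_2=0$ forced by the centering, then taking expectations (or just plugging in the constants) on the right-hand side would produce the random quantity $b_1'(U,F_{p_1})$, which is not a.s.\ constant, a contradiction. Once \eqref{s_cond_5} is secured, \cref{thm_A_conds} directly yields that $X_1$ and $X_2$ admit bounded densities $f_1,f_2\in\mathcal{C}^\infty(\R)$. (We do not claim the Schwartz property here because the finiteness of all moments of $X_1,X_2$ — needed for the second half of \cref{thm_A_conds} — is only known up to second moments from \citep[Theorem 6.4]{KnNe14}, so only the $\mathcal{C}^\infty$ conclusion is asserted.)
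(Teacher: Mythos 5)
Your proposal is correct and follows exactly the route the paper takes: the paper's own proof of this theorem is the one-line observation that $m=2$, $\amax_r=\max\{U^\lambda,(1-U)^\lambda\}$, $\asec_r=\min\{U^\lambda,(1-U)^\lambda\}$, with the verification of \eqref{s_cond_2}--\eqref{s_cond_6} ``left to the reader,'' and the non-degeneracy argument for \eqref{s_cond_5} is borrowed from the remark in Example 2.1 (which in fact precedes \cref{thm_det_repl}, not \cref{thm_random_replacement}, but the same reasoning transfers). You simply carry out the checking that the paper omits, including the observation that the Schwartz conclusion is withheld because all-order moment finiteness for $X_1,X_2$ is not established here --- which matches the paper's remark after the theorem.
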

\begin{proof}
	As in the previous example, this urn is covered by \cref{thm_A_conds} with parameters $m=2$, $\amax_r=\max\{U^\lambda,(1-U)^\lambda\}$, and $\asec_r=\min\{U^\lambda,(1-U)^\lambda\}$. Checking \eqref{s_cond_2}-\eqref{s_cond_6} is left to the reader.
\end{proof}
\begin{rem}
Note that $f_1$ and $f_2$ in \cref{thm_random_replacement} are Schwartz functions if $\E[|X_1|^p]$ and $\E[|X_2|^p]$ are both finite for every $p>0$ (see \cref{thm_A_conds}). Checking finiteness of moments will not be done in this paper, although it most likely holds, e.g., by a generalization of the methods of R{\"o}sler  \citep[Theorem 6]{Roe92}.
\end{rem}
\noindent{\bf 2.3.~Large Urns with more than Two Colors.} We end the examples of P{\'o}lya urns with a brief discussion on the case $q\geq 3$. As in Example 2.1, let $X_n^{[j]}$ be the urn composition after $n$ steps when starting with a single ball of color $j$. Limit theorems for $X_n^{[j]}$ are often described by considering the projections of $X_n^{[j]}$ to eigenspaces of the replacement matrix $(\xi_{i,j})_{i,j=1,\ldots,q}$; see, e.g., \cite{Pou08,Ma15}. Now assume the replacement matrix is deterministic and the urn is balanced, i.e.~there is an integer $S$ such that $\sum_j \xi_{i,j}=S$ for all colors $i$. Moreover, let $\lambda$ be a large eigenvalue of the replacement matrix, i.e.~an eigenvalue $\lambda\neq S$ with $\Re(\lambda)>S/2$. Then, properly rescaled, the projection of $X_n^{[j]}$ to the eigenspace of $\lambda$ converges to a limit $X_j$; cf.~\cite{Pou08} or \cite[Theorem 3]{Ma15} for details. Under suitable assumptions \cite[Theorem 8]{Ma15} the limits $X_1,\ldots,X_q$ satisfy
\begin{align}\label{rec_multi_polya}
X_r\stackrel{d}{=}\sum_{j+1}^{S+1} \left(D_j^{(r)}\right)^{\lambda/S} X_{\ell_r(j)}^{(j)},\quad r\in[q],
\end{align}
with the usual independence assumptions and where $D^{(r)}$ is a Dirichlet distributed random vector (the explicit parameters are given in \cite{Ma15}). If $\Im(\lambda)=0$ these equations can be treated in the same fashion as in Example 2.1 and we obtain:
\begin{thm}
	Let $\lambda$ be a large eigenvalue of the replacement matrix with $\Im(\lambda)=0$. Then the limits $X_1,\ldots,X_q$ given by \eqref{rec_multi_polya} are real valued and admit Schwartz densities.
\end{thm}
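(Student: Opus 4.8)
The plan is to deduce the statement from \cref{thm_A_conds}, applied to \eqref{rec_multi_polya} regarded as an instance of \eqref{main_rec} with $m=q$, $d=1$, $b_r\equiv 0$, and $A_{r,j}=(D_j^{(r)})^{\lambda/S}$ for $j\le S+1$ and $A_{r,j}=0$ otherwise. First I would record why the $X_r$ are real: since the replacement matrix is real and $\Im(\lambda)=0$, the spectral projection onto the generalized eigenspace of $\lambda$ is a real operator, so the rescaled projections defining the $X_r$ are real-valued, hence so are their almost sure limits; equivalently, $\lambda/S>1/2>0$ by $\Re(\lambda)>S/2$ and $S\ge 1$, so the coefficients $(D_j^{(r)})^{\lambda/S}$ are positive reals and a real-valued solution of \eqref{rec_multi_polya} stays real. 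Since only finitely many $A_{r,j}$ are nonzero, condition \eqref{cond_Sa} holds trivially and the series in \eqref{main_rec} is a finite sum. Because $d=1$ one has $\alpha_{r,j}=\|A_{r,j}^T\|_{\mathrm{op}}=(D_j^{(r)})^{\lambda/S}$, so $\amax_r$ and $\asec_r$ are the $(\lambda/S)$-th powers of the largest and second largest coordinates of the Dirichlet vector $D^{(r)}$.

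It then remains to verify \eqref{s_cond_2}--\eqref{s_cond_6}, which I expect to be routine and parallel to the proof of \cref{thm_det_repl}. Condition \eqref{s_cond_4} is immediate from $D_j^{(r)}\le 1$. Condition \eqref{s_cond_2} holds with $a=(S+1)^{-\lambda/S}$, because the largest entry of a probability vector with $S+1$ coordinates is at least $1/(S+1)$. Condition \eqref{s_cond_6} even holds with probability one: a non-degenerate Dirichlet vector with at least two components has every coordinate in $(0,1)$ almost surely, so $N_r((0,1))=S+1\ge 2$. For \eqref{s_cond_3} I would use that the density of $D^{(r)}$ is a constant times $\prod_j x_j^{a_j-1}$ with all parameters $a_j>0$: the event $\{\asec_r\le x\}$ forces at least $S$ of the $S+1$ coordinates to be at most $x^{S/\lambda}$, and integrating the density over the region where the coordinates indexed by a set $J$ of size $S$ are all $\le x^{S/\lambda}$ gives a bound $\bo\!\left(x^{(S/\lambda)\sum_{j\in J}a_j}\right)$; summing over the finitely many such $J$ yields \eqref{s_cond_3} with $\nu=(S/\lambda)\min_{|J|=S}\sum_{j\in J}a_j>0$.

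The only condition not decided by the coefficients alone is \eqref{s_cond_5}. As \cref{rem_s_cond_5} notes, \eqref{rec_multi_polya} also admits the degenerate solution $X_1=\dots=X_q\equiv 0$, so the non-degeneracy of the genuine urn limits has to be imported: for a large eigenvalue the limiting projection has strictly positive variance by Pouyanne's martingale construction (\cite{Pou08}, see also \cite[Theorem 3]{Ma15}), whence $\Prob(X_r=c)<1$ for every $c\in\R$, i.e.\ $\Supp(X_r)$ is in general position. With \eqref{s_cond_2}--\eqref{s_cond_6} in hand, \cref{thm_A_conds} yields a bounded density $f_r\in\mathcal{C}^\infty(\R)$ for each $r$, and since the rescaled projections converge in $L_p$ for every $p\ge 1$ (again \cite{Pou08,Ma15}), all moments $\E[|X_r|^p]$ are finite, so the second part of \cref{thm_A_conds} upgrades each $f_r$ to a Schwartz function. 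I expect the only genuinely delicate point to be \eqref{s_cond_5} --- more precisely, pinning down the exact statement in the P\'olya-urn literature that rules out degeneracy of the large-eigenvalue limits --- while the verification of \eqref{s_cond_2}--\eqref{s_cond_4} and \eqref{s_cond_6} is a direct computation with the Dirichlet density.
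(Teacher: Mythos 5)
Your proposal is correct and follows essentially the same route as the paper: recast \eqref{rec_multi_polya} as an instance of \eqref{main_rec} with $m=q$, $d=1$, $b_r\equiv 0$ and Dirichlet-power coefficients, verify \eqref{s_cond_2}--\eqref{s_cond_6}, and apply \cref{thm_A_conds} together with finiteness of all moments from \cite{Pou08}. The only notable difference is the justification of \eqref{s_cond_5}: the paper cites \cite[Theorem 11]{Ma15}, which shows the support of the limit is all of $\R$ when $\Im(\lambda)=0$, whereas you import non-degeneracy from the strictly positive variance of the martingale limit --- for $d=1$ general position is precisely non-degeneracy, so either reference suffices.
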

\begin{proof}
	It is shown by Mailler \cite[Theorem 11]{Ma15} that the support of these limits is $\R$ if $\Im(\lambda)=0$ (it is also shown that they admit densities). In particular \eqref{s_cond_5} holds. The other conditions can be verified as in Example 2.1. Since all moments of the limits are finite \cite{Pou08}, \cref{thm_A_conds} yields the existence of Schwartz densities.
\end{proof}
The case $\Im (\lambda)\neq 0$ needs to be treated slightly differently since the limits in this case are $\C$-valued. However, we also obtain:
\begin{thm}
	Let $\lambda$ be a large eigenvalue of the replacement matrix with $\Im(\lambda)\neq 0$. Then the limits $X_1,\ldots,X_q$ given by \eqref{rec_multi_polya} are $\C$-valued and admit Schwartz densities\footnote{A $\C$-valued random variable $X$ admits a Schwartz density if the vector $(\Re(X),\Im(X))$ admits a Schwartz density.}.
\end{thm}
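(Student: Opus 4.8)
The plan is to derive the theorem from \cref{thm_A_conds} by embedding the $\C$-valued system \eqref{rec_multi_polya} into $\R^2$ as in \cref{embed_complex}. Put $V_{r,j}:=\bigl(D_j^{(r)}\bigr)^{\lambda/S}$; by \eqref{embed_C_R2} the matrices $A_{r,j}$ are scaled rotations, so that $\alpha_{r,j}=\|A_{r,j}^T\|_{\mathrm{op}}=|V_{r,j}|=\bigl(D_j^{(r)}\bigr)^{\Re(\lambda)/S}$, where $D_j^{(r)}>0$ and $\Re(\lambda)/S\in(1/2,1)$ since $\lambda$ is a large eigenvalue. In particular $\amax_r$ and $\asec_r$ are the two largest entries of $\bigl(\bigl(D_j^{(r)}\bigr)^{\Re(\lambda)/S}\bigr)_{j\in[S+1]}$, there are only $S+1<\infty$ summands (so \eqref{cond_Sa} holds, with $b_r={\bf 0}$), and all moments of $X_1,\dots,X_q$ are finite by \cite{Pou08}. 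Hence it suffices to verify \eqref{s_cond_2}--\eqref{s_cond_6}, after which \cref{thm_A_conds} yields that each $(\Re(X_r),\Im(X_r))$ admits a Schwartz density, which is the assertion.

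Conditions \eqref{s_cond_2}, \eqref{s_cond_3}, \eqref{s_cond_4} and \eqref{s_cond_6} involve only the moduli $\bigl(|V_{r,j}|\bigr)_j=\bigl(\bigl(D_j^{(r)}\bigr)^{\Re(\lambda)/S}\bigr)_j$, and I would check them exactly as in Example~2.1: \eqref{s_cond_4} because $D_j^{(r)}\le 1$ and the exponent is positive; \eqref{s_cond_2} because the largest coordinate of a probability vector of length $S+1$ is at least $(S+1)^{-1}$, whence $\amax_r\ge (S+1)^{-\Re(\lambda)/S}>0$ a.s.; \eqref{s_cond_3} from the polynomial small-value estimate for the second largest coordinate of a Dirichlet vector (raising to the power $\Re(\lambda)/S$ only rescales the exponent $\nu$); and \eqref{s_cond_6} because $D_j^{(r)}\in(0,1)$ a.s., so $|V_{r,j}|\in(0,1)$ a.s.\ and $N_r((0,1))=S+1\ge 2$ a.s.

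The genuinely new point, which I expect to be the main obstacle, is \eqref{s_cond_5}: since $d=2$, I must show that $\Supp(X_r)$ is not contained in any affine line of $\C\cong\R^2$, and this is precisely where the hypothesis $\Im(\lambda)\ne0$ is used. If the analysis of the support in \cite[Theorem 11]{Ma15} already covers the case $\Im(\lambda)\ne0$, one can cite it directly; otherwise I would argue as follows. Each $X_i$ is non-degenerate (it admits a density by \cite{Ma15}), so $\Supp(X_i)$ has affine hull of dimension at least one. Fix $r$ and condition \eqref{rec_multi_polya} on $D^{(r)}=d$ with $d$ in the interior of the simplex (where the Dirichlet density is positive); by independence of the summands,
\begin{align*}
\Supp(X_r)\ \supseteq\ \overline{\textstyle\sum_{j=1}^{S+1}\,d_j^{\lambda/S}\,\Supp\bigl(X_{\ell_r(j)}\bigr)}.
\end{align*}
If some $\Supp(X_{\ell_r(j)})$ has two-dimensional affine hull, then so does the Minkowski sum, and we are done. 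Otherwise each summand is a set of at least two points lying on the affine line $d_j^{\lambda/S}L_{\ell_r(j)}$ of direction $\theta_{\ell_r(j)}+\tfrac{\Im(\lambda)}{S}\log d_j$ modulo $\pi$, where $L_i$ is the line supporting $X_i$ and $\theta_i$ its direction. Since $\Im(\lambda)\ne0$ and $S\ge1$, the quantity $\tfrac{\Im(\lambda)}{S}(\log d_1-\log d_2)$ is non-constant modulo $\pi$ as $d$ varies over the (at least one-dimensional) interior of the simplex, so one may choose $d$ for which the directions of the first two summands differ modulo $\pi$; their Minkowski sum, and hence $\Supp(X_r)$, then has two-dimensional affine hull. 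Either way \eqref{s_cond_5} holds, and \cref{thm_A_conds} (together with finiteness of all moments) finishes the proof. The routine ingredients are the Example~2.1-style checks of \eqref{s_cond_2}--\eqref{s_cond_4} and \eqref{s_cond_6}; the delicate part is making the support argument for \eqref{s_cond_5} rigorous, in particular the facts that the Minkowski sum of two non-parallel one-dimensional sets has two-dimensional affine hull and that $\log d_1-\log d_2$ genuinely varies over the simplex.
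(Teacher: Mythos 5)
Your proof follows the paper's approach exactly: embed the $\C$-valued system into $\R^2$ via \cref{embed_complex}, verify \eqref{s_cond_2}--\eqref{s_cond_6} using $\alpha_{r,j}=\|A_{r,j}^T\|_{\mathrm{op}}=\bigl(D_j^{(r)}\bigr)^{\Re(\lambda)/S}$ as in Example 2.1, and apply \cref{thm_A_conds} together with the finiteness of all moments from \cite{Pou08}. The paper handles \eqref{s_cond_5} by simply citing $\Supp(X_r)=\C$ from \cite[Theorem 11]{Ma15}, so your backup Minkowski-sum/rotation argument is an optional but correct self-contained alternative that makes explicit where the hypothesis $\Im(\lambda)\neq 0$ enters.
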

\begin{proof}
	Recall that $X_1,\ldots,X_q$ can be embedded into $\R^2$ in the canonical way (cf.~\cref{embed_complex}).
	It is not hard to show that $\Supp(X_r)=\C$ for every $r\in [q]$ if $\Im(\lambda)\neq 0$; cf.~\cite[Theorem 11]{Ma15}. In particular, \eqref{s_cond_5} holds. The other conditions can be verified as before, noting that $|(D_j^{(r)})^\lambda|=(D_j^{(r)})^{\Re(\lambda)}$. Since all moments of the limits are finite \cite{Pou08}, \cref{thm_A_conds} yields the existence of Schwartz densities.
\end{proof}
\begin{rem}
	Note that equations like \eqref{rec_multi_polya} also appear in the context of $m$-ary search trees; see \cite{FiKa05,ChLiPou12} for some examples. \cref{thm_A_conds} can be applied to these equations as well but details are left to the reader. 
\end{rem}
	
\noindent
{\bf 3. Path Length in Random Trees.} Let $\mathcal{T}$ be a rooted tree and let $V(\mathcal{T})$ denote the vertex-set of $\mathcal{T}$. Moreover, let $d_v$ denote the distance between $v\in V(\mathcal{T})$ and the root of $\mathcal{T}$, where the distance of two vertices is defined as the number of edges in the unique path connecting them. 

If $\mathcal{T}$ is a tree storing data in its vertices, the total path length can be defined in two different ways: either with respect to the data, or with respect to the vertices.
The total path length of $\mathcal{T}$ with respect to its vertices is
\begin{align*}
\Upsilon(\mathcal{T}):=\sum_{v\in V(\mathcal{T})} d_v.
\end{align*}
Now assume $\mathcal{T}$ stores data in its vertices, e.g.~assume that $\mathcal{T}$ stores $n(\mathcal{T})$ numbers $u_1,\ldots,u_{n(\mathcal{T})}$. Let $v_j$ be the the vertex that contains $u_j$ and let $d_j:=d_{v_j}$. Then the total path length of $\mathcal{T}$ with respect to its data is 
\begin{align*}
\Psi(\mathcal{T}):=\sum_{j=1}^{n(\mathcal{T})} d_j.
\end{align*}
The contraction method has been applied to the total path length of a variety of random trees. We only list some random trees that are covered by our main result (\cref{thm_A_conds}). Definitions and limit laws can be found in the references:\\

\noindent
{\bf 3.1. Random Recursive Trees.} Let $\mathcal{T}_n$ be a random recursive tree with $n$ vertices (see Smythe and Mahmoud \citep{SmyMah95} for a survey on recursive trees).
Then \citep{Ma91,DoFi99}, as $n\rightarrow\infty$, 
\begin{align}\label{LimRRT}
\frac{\Upsilon(\mathcal{T}_n)-\E[\Upsilon(\mathcal{T}_n)]} n \longrightarrow X \quad \text{a.s. and in $L_p$ for any $p>0$}
\end{align}
where $X$ is some non-degenerate random variable. The limit $X$ satisfies \citep{DoFi99}:
\begin{align}\label{DE_RRT}
X\stackrel{d}{=} U X^{(1)} +(1-U)X^{(2)}+h(U),
\end{align}
where $X^{(1)},X^{(2)}$ and $U$ are independent; $X^{(1)}$ and $X^{(2)}$ have the same distribution as $X$; $U$ is uniformly distributed on $[0,1]$; and $h(u):=u+u\log u + (1-u)\log(1-u)$.

This equation is very similar to Equation \eqref{DE_Quicksort} for Quicksort.
As already mentioned by Dobrow and Fill \citep{DoFi99}, essentially the same analysis as for Quicksort show that $X$ admits a Schwartz density. \cref{thm_A_conds} adds a formal proof to this observation:
\begin{thm}\label{thm_RRT} Let $X$ be the limit in \eqref{LimRRT}. Then $X$ admits a Schwartz density.
\end{thm}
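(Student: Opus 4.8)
The plan is to deduce the statement directly from \cref{thm_A_conds}, exactly as in the Quicksort example, by verifying its hypotheses for the distributional equation \eqref{DE_RRT} and then invoking the moment condition for the Schwartz conclusion. Here $m=1$ and $d=1$, with $(A_{1,j})_{j\geq 1}=(U,1-U,0,0,\ldots)$ and $b_1=h(U)$. Since these coefficients are scalars, $\alpha_{1,j}=\|A_{1,j}^T\|_{\mathrm{op}}=|A_{1,j}|$, so that $\amax_1=\max\{U,1-U\}$ and $\asec_1=\min\{U,1-U\}$. As a first observation I would note that \eqref{cond_Sa} holds trivially (only two coefficients are nonzero), so \eqref{main_rec} is well posed in the sense required by the article.

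Next I would check \eqref{s_cond_2}--\eqref{s_cond_6} in turn. Condition \eqref{s_cond_2} holds with $a=1/2$, since $\max\{U,1-U\}\geq 1/2$ surely; \eqref{s_cond_3} holds with $\lambda=2$ and $\nu=1$, since $\Prob(\min\{U,1-U\}\leq x)\leq 2x$ for every $x>0$; \eqref{s_cond_4} holds because $U,1-U\in[0,1]$; and \eqref{s_cond_6} holds because $\alpha_{1,1}=U\in(0,1)$ almost surely, so that $N_1((0,1))\geq 1$ a.s. The only point that warrants a written sentence rather than being left to the reader is \eqref{s_cond_5}, i.e.\ that $\Supp(X)$ is in general position, which for $d=1$ means $X$ is non-degenerate. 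This is already asserted in \eqref{LimRRT}, but it also follows directly from \eqref{DE_RRT}: if $X=c$ almost surely, then $c=Uc+(1-U)c+h(U)$ forces $h(U)=0$ a.s., contradicting the fact that $h(u)=u+u\log u+(1-u)\log(1-u)$ is continuous on $[0,1]$ and not identically zero (e.g.\ $h(1)=1$), while $U$ has full support $[0,1]$.

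Finally, for the Schwartz statement I would apply the second part of \cref{thm_A_conds}: the convergence in \eqref{LimRRT} holds in $L_p$ for every $p>0$, hence $\E[|X|^p]<\infty$ for all $p>0$, and \cref{thm_A_conds} then yields that $X$ admits a Schwartz density. I do not expect any genuine obstacle: the equation \eqref{DE_RRT} is structurally identical to the Quicksort equation \eqref{DE_Quicksort}, for which the same verifications are declared routine, so the whole proof reduces to checking hypotheses, the mild non-degeneracy argument above being the only nontrivial ingredient.
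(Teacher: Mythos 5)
Your proposal is correct and follows essentially the same route as the paper, which simply notes that the coefficients $\amax_1,\asec_1$ agree with the Quicksort case, leaves the hypothesis-checking to the reader, and cites Dobrow and Fill for the moments. You have merely filled in those details (notably the non-degeneracy argument via $h(U)\not\equiv 0$ and the $L_p$ convergence giving finite moments), which is what the paper intends.
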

\begin{proof}
	The arguments are essentially the same as in the Quicksort example (note that $\amax_1$ and $\asec_1$ coincide with the coefficients in Quicksort). Details are left to the reader. For the finiteness of all moments see Dobrow and Fill \citep{DoFi99}.
\end{proof}
A slightly modified tree model with a weighted root (called a {\it Hoppe tree}) was defined and studied by Leckey and Neininger \citep{LeNe13}. We do not go into detail for this variation and just point out that the limit of the total path length in Hoppe trees is also covered by \cref{thm_A_conds}.\\

\noindent
{\bf 3.2. Split Trees.}  Random split trees are a class of random trees introduced by Devroye \citep{Dev98}. The distribution of a random split tree is determined 
by a {\it branch factor} $b\in \N_{\geq 2}$, a {\it capacity} $s\in\N_0$, a {\it (random) split vector} $\mathcal{V}=(V_1,\ldots,V_b)$, and {\it ball distribution parameters} $(s_0,s_1)\in\N_0^2$. Here, $\mathcal{V}$ is a random variable taking values in the unit simplex of $\R^b$. We refer to Devroye \citep{Dev98} for a definition of a random split tree. 
Let $\mathcal{T}_n$ be a random split tree storing $n$ items. Let $X_n:=\Psi(\mathcal{T}_n)$.

Let $\mu:=-\E[V_1\ln V_1 +\cdots + V_b \ln V_b]$ and assume $\mu\neq 0$ (i.e.~$\Prob( \exists i : V_i=1)<1$). Let 
\begin{align}\label{def_split_CV}
C(\mathcal{V}):=1+\frac 1 \mu \sum_{j=1}^b V_j\ln V_j.
\end{align}
Based on an unproven assumption on the asymptotic expansion of the mean, Neininger and R{\"u}schendorf \citep{NeRue99} show
\begin{align}\label{lim_split_tree}
\frac{ X_n -\E[X_n]} n \stackrel{d}{\longrightarrow} X.
\end{align}
This assumption on the mean has been verified by Broutin and Holmgren \citep{BrouHolm} for split vectors with $\Prob(C(\mathcal{V})\neq 0)>0$ (see also Munsonius \citep{Mun11} for split vectors where the marginals have a Lebesgue density). The limit $X$ then satisfies
\begin{align}\label{DE_splittree}
X\stackrel{d}{=}\sum_{j=1}^b V_j X^{(j)} +C(\mathcal{V}),
\end{align}
where $X^{(1)},\ldots, X^{(b)}$ and $\mathcal{V}$ are independent; and $X^{(j)}$ has the same distribution as $X$.

To the best of this author's knowledge the distribution of $X$ in \eqref{lim_split_tree} has not been studied in the literature. The main result in the next section yields the following:
\begin{thm}\label{thm_split}
	Assume that the split vector $\mathcal{V}=(V_1,\ldots,V_b)$ satisfies
	\begin{itemize}
		\item $\Prob(\max_j V_j \geq 1-x)\leq \lambda x^\nu\quad \text{for some $\lambda,\nu>0$ and all $x\geq 0$,}$
		\item $\Prob(C(\mathcal{V})\neq 0)>0.$
	\end{itemize}
	Then the limit $X$ in \eqref{lim_split_tree} admits a Schwartz density $f$. 
\end{thm}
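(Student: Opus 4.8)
The plan is to read this off \cref{thm_A_conds}, applied with $m=1$, $d=1$, $A_{1,j}=V_j$ for $j\in[b]$ and $A_{1,j}=0$ for $j>b$, and $b_1=C(\mathcal{V})$; since the series in \eqref{DE_splittree} is a finite sum, the standing assumption \eqref{cond_Sa} holds automatically. In dimension one, $\alpha_{1,j}=\|A_{1,j}^T\|_{\mathrm{op}}=|V_j|=V_j$, so $\amax_1\ge\asec_1$ are simply the two largest among $V_1,\dots,V_b$. It then suffices to verify \eqref{s_cond_2}--\eqref{s_cond_6}, and, for the Schwartz conclusion, that $\E[|X|^p]<\infty$ for every $p>0$.

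Conditions \eqref{s_cond_2} and \eqref{s_cond_4} are immediate: $\amax_1=\max_jV_j\ge\frac1b\sum_jV_j=\frac1b$ gives \eqref{s_cond_2} with $a=1/b$, and $V_j\le\sum_iV_i=1$ gives \eqref{s_cond_4}. For \eqref{s_cond_3} I would use that the $b-1$ non-maximal components sum to $1-\amax_1$, whence $\asec_1\ge(1-\amax_1)/(b-1)$ and therefore $\{\asec_1\le x\}\subseteq\{\max_jV_j\ge1-(b-1)x\}$; the first assumption of the theorem, applied with $(b-1)x$ in place of $x$, then yields $\Prob(\asec_1\le x)\le\lambda(b-1)^\nu x^\nu$, which is \eqref{s_cond_3} (after enlarging the constant so the bound also covers $x$ bounded away from $0$, where it is trivial). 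For \eqref{s_cond_5} with $d=1$ it suffices to check that $X$ is non-degenerate: if $\Prob(X=c)=1$, then substituting $X^{(j)}\equiv c$ into \eqref{DE_splittree} gives $c\stackrel{d}{=}c\sum_jV_j+C(\mathcal{V})=c+C(\mathcal{V})$, forcing $C(\mathcal{V})=0$ a.s., which contradicts $\Prob(C(\mathcal{V})\ne0)>0$. Finally, letting $x\downarrow0$ in the first assumption gives $\Prob(\max_jV_j=1)=0$, so a.s.\ at least two of the $V_j$ are nonzero and each such component lies in $(0,1)$; hence $N_1((0,1))\ge2$ a.s., which is \eqref{s_cond_6}. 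At this point \cref{thm_A_conds} already yields a bounded density $f\in\mathcal{C}^\infty(\R)$.

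The remaining step, which I expect to be the only non-routine one, is to show $\E[|X|^p]<\infty$ for every $p>0$; the second part of \cref{thm_A_conds} then upgrades $f$ to a Schwartz density. The structural facts I would exploit are that $C(\mathcal{V})=1+\mu^{-1}\sum_jV_j\ln V_j$ is \emph{bounded} (since $0\le-\sum_jV_j\ln V_j\le\ln b$) and \emph{centred} ($\E[C(\mathcal{V})]=1+\mu^{-1}\E[\sum_jV_j\ln V_j]=0$ by the definition of $\mu$), and that $\max_jV_j<1$ a.s., so that $\sum_jV_j^p\le(\max_jV_j)^{p-1}\to0$ a.s.\ as $p\to\infty$ and, by dominated convergence, $\E[\sum_jV_j^p]<1$ for all sufficiently large $p$. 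Together with the boundedness of $b_1=C(\mathcal{V})$, this is exactly the contraction estimate needed to run the standard moment bounds for \eqref{main_rec}: finiteness of all moments of $X$ then follows either from R\"osler's criterion \citep[Theorem 6]{Roe92} applied to \eqref{DE_splittree}, or by bootstrapping through the Zolotarev metrics $\zeta_s$ (the contraction-method derivation of \eqref{lim_split_tree} in \citep{NeRue99,BrouHolm} already runs in such a metric, and since $\Psi(\mathcal{T}_n)$ is deterministically bounded for each $n$, the limit inherits finite moments of all orders). Combining the two parts gives the claimed Schwartz density $f$.
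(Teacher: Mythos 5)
Your proposal matches the paper's proof essentially step for step: the identifications $\amax_1\geq 1/b$, $\asec_1\geq(1-\amax_1)/(b-1)$, the deduction of \eqref{s_cond_2}--\eqref{s_cond_4} and \eqref{s_cond_6} from $\sum_j V_j=1$ and $\Prob(\max_j V_j=1)=0$, and the non-degeneracy argument for \eqref{s_cond_5} via $\Prob(C(\mathcal{V})\neq 0)>0$ are exactly what the paper does before invoking \cref{thm_A_conds}. The only difference is that for $\E[|X|^p]<\infty$ the paper simply cites Broutin and Holmgren \citep[Theorem 2.1]{BrouHolm}, whereas you sketch a (correct) re-derivation via the contraction estimate $\E[\sum_j V_j^p]<1$ for large $p$ and boundedness of $C(\mathcal{V})$; both are fine.
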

\begin{proof}
	Note that 
	since $\sum_j V_j =1$, 
	\begin{align*}
	\amax_1\geq 1 / b, \qquad \asec_1\geq (1-\amax_1)/(b-1).
	\end{align*}
	Hence \eqref{s_cond_2} and \eqref{s_cond_3} hold by assumption on $\max_j V_j$. Moreover, \eqref{s_cond_4} holds since $(V_1,\ldots,V_b)$ is a probability vector and \eqref{s_cond_6} holds since $\Prob(\max_j V_j =1)=0$. Finally note that \eqref{DE_splittree} and $\Prob(C(\mathcal{V})\neq 0)>0$ imply that $X$ is non-degenerate and therefore \eqref{s_cond_5} holds.
	\cref{thm_A_conds} yields that $X$ admits a Schwartz density, since all moments of $X$ are finite \citep[Theorem 2.1]{BrouHolm}.	
\end{proof}
Note that the second condition ($\Prob(C(\mathcal{V})\neq 0)>0$) is only required for the limit law by Broutin and Holmgrem. More generally, the first condition in the previous theorem implies that any non-degenerate solution to \eqref{DE_splittree} admits a bounded density $f\in\mathcal{C}^\infty(\R)$. If the solution has finite moments of any order, then $f$ is Schwartz function.

Also note that the class of split trees covers several random trees appearing in context of computer science. A list of examples is given by Devroye \citep[Table 1]{Dev98}. All examples satisfy the (first) condition of \cref{thm_split} and thus all limits in these examples admit Schwartz densities.

Finally note that \cref{thm_A_conds} can also be applied to multivariate limit laws. As an example we discuss a bivariate limit law for split trees by Munsonius \citep[Theorem 1.5]{Mun11}.
Again let $\mathcal{T}_n$ be a random split tree storing $n$ items and let $X_n:=\Psi(\mathcal{T}_n)$. Moreover, let $W_n$ denote the Wiener index of $\mathcal{T}_n$ (see \citep{Mun11}). Assume that the marginals $V_j$ of the split vector $\mathcal{V}$ have Lebesgue densities. Then \citep[Theorem 1.5]{Mun11}
\begin{align}\label{lim_WX}
\left(\frac{ W_n -\E[W_n]} {n^2} , \frac{ X_n-\E[X_n]} n \right)\stackrel{d}{\longrightarrow} (W,X)
\end{align}
for some limit $(W,X)$. This limit satisfies $\E[W]=\E[X]=0$ and 
\begin{align}\label{rec_WX}
\begin{pmatrix} W \\ X \end{pmatrix} \stackrel{d}{=}\sum_{j=1}^b\begin{pmatrix} V_j^2 & V_j(1-V_j) \\ 0 & V_j \end{pmatrix} \begin{pmatrix} W^{(j)} \\ X^{(j)}\end{pmatrix} + \eta(\mathcal{V})
\end{align}
where $\mathcal{V}, (W^{(1)},X^{(1)}),\ldots,(W^{(b)},X^{(b)})$ are independent; $(W^{(j)},X^{(j)})\stackrel{d}{=}(W,X)$; and 
\begin{align*}
\eta(\mathcal{V})=C(\mathcal{V}) \begin{pmatrix} 1 \\ 1 \end{pmatrix} + \begin{pmatrix} c \left(1 - \sum_j V_j^2\right)-1 \\ 0 \end{pmatrix}
\end{align*}
for some constant $c\in\R$ and with $C(\mathcal{V})$ as in \eqref{def_split_CV}. \cref{thm_A_conds} yields the following:
\begin{thm}\label{thm_WX} Let $(V_1,\ldots,V_b)$ be a split vector with
	\begin{itemize}
		\item $\Prob\left(\max_j V_j \geq 1-x\right)\leq \lambda x^\nu$ for some $\lambda,\nu>0$ and all $x\geq 0$,\\
		\item $\Prob\left(\sum_j V_j^2=x\right)<1$ for all $x\in \R$.
	\end{itemize}
	Then the limit $(W,X)$ in \eqref{lim_WX} admits a bounded density $f\in\mathcal{C}^\infty(\R^2)$.
\end{thm}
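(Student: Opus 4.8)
The plan is to deduce \cref{thm_WX} from the main result \cref{thm_A_conds} by checking conditions \eqref{s_cond_2}--\eqref{s_cond_6} for the system \eqref{rec_WX}. Here $m=1$, $d=2$, $b_1=\eta(\mathcal V)$ and
\[
A_{1,j}=\begin{pmatrix} V_j^2 & V_j(1-V_j)\\ 0 & V_j\end{pmatrix}\quad(1\le j\le b),\qquad A_{1,j}=\mathbf 0\quad(j>b),
\]
so only finitely many $A_{1,j}$ are nonzero, \eqref{cond_Sa} holds and the defining series converges trivially. (If in addition all moments of $(W,X)$ are finite, \cref{thm_A_conds} even yields a Schwartz density, but \cref{thm_WX} asks only for the $\mathcal C^\infty$ statement.)

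For \eqref{s_cond_2}--\eqref{s_cond_4} and \eqref{s_cond_6} I would argue through the singular values of $A_{1,j}$. By \eqref{def_alpha_ij}, $\alpha_{1,j}$ and $\|A_{1,j}^T\|_{\mathrm{op}}$ are the smallest and largest singular value of $A_{1,j}$, so their product is $|\det A_{1,j}|=V_j^3$. Their squares are the roots of $\lambda^2-T_j\lambda+V_j^6$, where $V_j^6=(\det A_{1,j})^2$ and $T_j=\mathrm{tr}(A_{1,j}A_{1,j}^{T})=2V_j^4-2V_j^3+2V_j^2$; for $V_j\in[0,1]$ both roots lie in $[0,1]$, since $T_j\le 2$ and $1-T_j+V_j^6=(1-V_j)^2(V_j^4+2V_j^3+V_j^2+2V_j+1)\ge 0$. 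This gives \eqref{s_cond_4}, and the same computation shows both singular values lie strictly in $(0,1)$ when $V_j\in(0,1)$. From $\|A_{1,j}^T\|_{\mathrm{op}}\le 1$ we get $\alpha_{1,j}\ge V_j^3$ for every $j$, hence $\amax_1\ge(\max_j V_j)^3\ge b^{-3}$ because $\sum_j V_j=1$, which is \eqref{s_cond_2}; likewise, writing $V_{(2)}$ for the second largest coordinate of $\mathcal V$, one has $\asec_1\ge V_{(2)}^3\ge\bigl((1-\max_j V_j)/(b-1)\bigr)^3$, so
\[
\Prob(\asec_1\le x)\le\Prob\bigl(\max_j V_j\ge 1-(b-1)x^{1/3}\bigr)\le\lambda (b-1)^\nu x^{\nu/3},
\]
which is \eqref{s_cond_3}. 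Finally, the first hypothesis at $x=0$ forces $\max_j V_j<1$ a.s., so at least two coordinates $V_j$ lie in $(0,1)$, and by the remark above $\alpha_{1,j}\in(0,1)$ and $\|A_{1,j}^T\|_{\mathrm{op}}\in(0,1)$ for each of them; thus $N_1((0,1))\ge 2$ a.s., which is \eqref{s_cond_6}.

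The hard part is \eqref{s_cond_5}, which according to \cref{rem_s_cond_5} must be extracted from the coefficients. Since $d=2$ it suffices to show $\Supp((W,X))$ is not contained in a line; as $\E[(W,X)]=0$ lies in the closed convex hull of the support, such a line passes through the origin, say $\ell=\{(w,x):\alpha w+\beta x=0\}$ with $(\alpha,\beta)\neq 0$. Applying the standard support argument to \eqref{rec_WX} (all recursive copies placed near a single $p=(w,x)\in\Supp((W,X))$ and $\mathcal V$ near $v\in\Supp(\mathcal V)$) yields $\Phi(v)p+\eta(v)\in\Supp((W,X))\subseteq\ell$ for all such $p,v$, where $\Phi(v):=\sum_{j=1}^b A_{1,j}(v)=\begin{pmatrix} q(v) & 1-q(v)\\ 0 & 1\end{pmatrix}$, $q(v):=\sum_{j=1}^b v_j^2$, and $\eta(v)=C(v)\,(1,1)^T+(c(1-q(v))-1,\,0)^T$. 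Eliminating $w$ via $\alpha w+\beta x=0$, the membership $\Phi(v)p+\eta(v)\in\ell$ becomes the scalar identity
\[
(\alpha+\beta)\,(1-q(v))\,x=\alpha-\alpha c\,(1-q(v))-(\alpha+\beta)\,C(v),
\]
required for all $(w,x)\in\Supp((W,X))$ and all $v\in\Supp(\mathcal V)$. If $\alpha+\beta\neq 0$, pick $v,v'\in\Supp(\mathcal V)$ with $q(v)\neq q(v')$ — possible by the hypothesis $\Prob(\sum_j V_j^2=x)<1$ — and subtract the two identities: this pins $x$ to a single value, so $X$ is degenerate, hence $X\equiv 0$ by $\E[X]=0$. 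But the $X$-marginal of \eqref{rec_WX} is precisely \eqref{DE_splittree}, whose centred finite-variance solution can be degenerate only when $C(\mathcal V)\equiv 0$; and $C(\mathcal V)=1+\mu^{-1}\sum_j V_j\ln V_j$ is not a.s.\ constant, since \eqref{lim_WX} is stated under the assumption that the marginals of $\mathcal V$ have Lebesgue densities — a contradiction. If $\alpha+\beta=0$, the identity reduces to $c\,(1-q(v))\equiv 1$ on $\Supp(\mathcal V)$, forcing $q(\mathcal V)$ a.s.\ constant (again contradicting the second hypothesis) or the absurdity $0=1$. In every case we reach a contradiction, so \eqref{s_cond_5} holds and \cref{thm_A_conds} yields that $(W,X)$ admits a bounded density $f\in\mathcal C^\infty(\R^2)$.

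I expect the genuinely delicate step to be the one just flagged: showing that $C(\mathcal V)$ is not a.s.\ constant (equivalently, that the degenerate solution $X\equiv 0$ is excluded). For $b=2$ this is immediate, since $v_1\mapsto v_1\ln v_1+(1-v_1)\ln(1-v_1)$ has finite level sets and hence cannot be a.s.\ constant while $V_1$ has a Lebesgue density; for general $b$ one must combine the density assumption on the marginals with the non-degeneracy hypothesis on $\sum_j V_j^2$, and this is the point at which the hypotheses of \cref{thm_WX} are really used, rather than just the structure of the matrices $A_{1,j}$.
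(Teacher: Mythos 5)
Your proposal follows essentially the same route as the paper: reduce to the conditions \eqref{s_cond_2}--\eqref{s_cond_6}, control the singular values of $A_{1,j}$, and verify general position by a support argument along lines. The minor variations are stylistic. For the singular values, the paper computes $\alpha_{1,j}$ and $\|A_{1,j}^T\|_{\mathrm{op}}$ exactly and then bounds them by $V_j^{5/2}\le\alpha_{1,j}$ and $\|A_{1,j}^T\|_{\mathrm{op}}\le\sqrt{V_j}$, whereas your determinant/trace argument gives $\alpha_{1,j}\ge V_j^3$ and $\|A_{1,j}^T\|_{\mathrm{op}}\le 1$; both suffice. For \eqref{s_cond_5}, the paper writes the line as $W=aX+b$, uses $\E[W]=\E[X]=0$ to force $b=0$, cites \cref{thm_split} for $\Prob(X=0)<1$, then treats $a\neq 1$ by varying $x\in\Supp(X)$ and $a=1$ by $\eta(v)\not\parallel(1,1)^T$; you parametrize through the origin and vary $v\in\Supp(\mathcal V)$ instead, which is equivalent. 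Your case $\alpha+\beta\neq 0$ with $\alpha\neq 0$ actually does not require the $C(\mathcal V)\not\equiv 0$ input: once $X$ is pinned (hence $X\equiv 0$), the line through the origin with $\alpha\neq 0$ forces $(W,X)\equiv(0,0)$, whence $\eta(\mathcal V)\equiv 0$ and $q(\mathcal V)$ is a.s.\ constant, contradicting the second hypothesis directly. It is only the vertical line $\alpha=0$ (i.e.\ $X\equiv 0$) where $C(\mathcal V)\not\equiv 0$ is unavoidably needed, and here your flag is apt but applies equally to the paper: the appeal to \cref{thm_split} presupposes $\Prob(C(\mathcal V)\neq 0)>0$, which is not literally among the stated hypotheses of \cref{thm_WX} and must be read off from the density assumption on the marginals of $\mathcal V$ underlying \eqref{lim_WX}. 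So your proof is sound and matches the paper's strategy, and the delicacy you identify is a real (if minor) informality shared with the published argument.
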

\begin{rem}
	\cref{thm_A_conds} also yields that the density $f$ above is a Schwartz function, if $\E[|X|^p]<\infty$ and $\E[|W|^p]<\infty$ for all $p>0$. It is known that $X$ has finite exponential moments \citep[Theorem 5.1]{NeRue99}\citep[Theorem 2.1]{BrouHolm}. We leave it as an open problem whether all moments of $W$ are finite. 
\end{rem}
\begin{rem}\label{rem_sec_cond_split}
	The second condition in \cref{thm_WX} is necessary, since otherwise $(X,X)$ is a solution to \eqref{rec_WX}: Note that $\E[W]=\E[X]=\E[C(\mathcal{V})]=0$ and thus
	\begin{align*}
	\E\left[c\left(1-\sum_j V_j^2\right)\right] = 1.
	\end{align*}
	Hence, if $\sum_j V_j^2$ has a degenerate distribution then $\eta(\mathcal{V})= C(\mathcal{V}) (1,1)^T$. Therefore if $X$ is the solution to \eqref{DE_splittree} then $(X,X)$ is a solution to \eqref{rec_WX}.
\end{rem}
\begin{proof}[Proof of \cref{thm_WX}] Recall that in this example $m=1$ and 
	\begin{align*}
	A_{1,j}&=\begin{pmatrix} V_j^2 & V_j (1-V_j) \\ 0 & V_j \end{pmatrix},\, j\in [b],\quad A_{1,j}={\bf 0},\, j>b,\\ 
	b_1&=\eta(\mathcal{V})=C(\mathcal{V})\begin{pmatrix} 1 \\ 1\end{pmatrix} +\begin{pmatrix} c \left(1-\sum_j V_j^2\right) -1 \\ 0 \end{pmatrix}.
	\end{align*}
	As a preparation we need to compute $\|A_{r,j}^T\|_{\mathrm{op}}$ and $\alpha_{r,j}:=\min_{\|t\|=1} \|A_{r,j}^T t \|$. Let 
	\begin{align*}
	f_v(x):=\left\| \begin{pmatrix} v^2 & 0 \\ v(1-v) & v \end{pmatrix} \begin{pmatrix} x \\ \sqrt{1-x^2}\end{pmatrix} \right\|^2,\qquad v\in[0,1], x\in [-1,1].
	\end{align*}
	Rewriting $f_v(x)=v^2\left[ 1 - 2(1-v)\left(v x^2 - x \sqrt{1-x^2}\right)\right]$ reveals that it is sufficient to maximize and minimize the function
	\begin{align*}
	x\mapsto v x^2 -x\sqrt{1-x^2},\qquad x\in [-1,1].
	\end{align*}
	The extreme points of this function in $(-1,1)$ are
	\begin{align*}
	x_1=\frac 1 {\sqrt{ 2} } \sqrt{ 1 - \frac{ v } {\sqrt{1+v^2}}},\qquad x_2=-\frac 1 {\sqrt{2}}\sqrt{1+ \frac v {\sqrt{1+v^2}}}.
	\end{align*}
	Hence,
	\begin{align*}
	\max_{x\in[-1,1]} f_v(x)&=v\sqrt{1-v(1-v)+(1-v)\sqrt{ 1 +v^2}},\\
	\min_{x\in[-1,1]} f_v(x)&=v\sqrt{1-v(1-v)-(1-v)\sqrt{ 1 +v^2}}
	\end{align*}
	and therefore, for any $j\in [b]$,
	\begin{align*}
	\|A_{1,j}^T\|_{\mathrm{op}} &= V_j\sqrt{1-V_j(1-V_j)+(1-V_j)\sqrt{ 1 +V_j^2}}\\
	\alpha_{1,j}&=V_j\sqrt{1-V_j(1-V_j)-(1-V_j)\sqrt{ 1 +V_j^2}}
	\end{align*}
	In particular, using $(a)$ $\sqrt{1+v^2}\leq 1+v$ and $v(2-v)\leq 1$, and $(b)$ $\sqrt{1+v^2}\leq 1+v^2$,
	\begin{align*}
	\|A_{1,j}^T\|_{\mathrm{op}}\stackrel{(a)}{\leq} \sqrt{V_j}\leq 1,\qquad \alpha_{1,j}\stackrel{(b)}{\geq } V_j^{5/2}.
	\end{align*}
	With these bounds combined with the fact that $(V_1,\ldots,V_b)$ is a probability vector it is not hard to check  \eqref{s_cond_2}-\eqref{s_cond_4} and \eqref{s_cond_6} (cf.~the previous proof for similar arguments).
	
	It remains to show that $\Supp((W,X))$ is in general position.
	Note that in $\R^2$ only (subsets of) lines are not in general position. Thus $\Supp((W,X))$ is in general position if and only if
	\begin{align*}
	\Prob(X=0)<1\quad \text{and}\quad\Prob(W=aX+b)<1\text{ for all }a,b\in\R.
	\end{align*}
	We already know $\Prob(X=0)=0$ by the previous result on the path length. Also note that $\E[W]=\E[X]=0$ implies $\Prob(W=aX+b)<1$ for all $b\neq 0$. 
	Thus, it only remains to show $\Prob(W=aX)<1$ for all $a\in\R$.
	
	Suppose for the sake of contradiction that there is an $a\in\R$ such that
	\begin{align*}
	\Prob(W=aX)=1.
	\end{align*}
	Then $\Supp((W,X))=\{ x (a,1) : x\in\Supp(X)\}$. However,  \eqref{rec_WX} implies for any $(ax,x)\in\Supp((W,X))$ and any $v\in\Supp(\mathcal{V})$ that
	\begin{align*}
	\Supp((W,X))\ni \zeta(x,v)&:=\sum_{j=1}^b\begin{pmatrix} v_j^2 & v_j(1-v_j) \\ 0 & v_j \end{pmatrix} \begin{pmatrix} ax \\ x\end{pmatrix} + \eta(v).
	\end{align*}
	In particular, $\zeta(x^\prime,v)-\zeta(x,v)\in\{ y (a,1): y\in\R\}$ for any $x,x^\prime\in\Supp(X)$ and $v\in\Supp(\mathcal{V})$, since $\{ y (a,1): y\in\R\}$ is closed under subtraction. Hence
	\begin{align*}
	(x-x^\prime)\sum_{j=1}^b\begin{pmatrix} v_j^2 & v_j(1-v_j) \\ 0 & v_j \end{pmatrix} \begin{pmatrix} a \\ 1\end{pmatrix}\in \{ y (a,1): y\in\R\},
	\end{align*}
	which for $x\neq x^\prime$ is equivalent to
	\begin{align*}
	\sum_j v_j^2 a + 1-\sum_j v_j^2 = a.
	\end{align*}
	Since $\Prob(\sum_j V_j^2<1)=\Prob(\max_j V_j <1)=1$ by assumption, this implies $a=1$. Thus we may conclude $\Prob(W=aX)<1$ for all $a\neq 1$ and it only remains to show $\Prob(W=X)<1$.
	
	As before, note that if $(x,x)\in\Supp((W,X))$ then
	\begin{align*}
	\Supp((W,X))\ni\zeta(x,v)&:=\sum_{j=1}^b\begin{pmatrix} v_j^2 & v_j(1-v_j) \\ 0 & v_j \end{pmatrix} \begin{pmatrix} x \\ x\end{pmatrix} + \eta(v)=\begin{pmatrix} x \\ x\end{pmatrix} + \eta(v),
	\end{align*}
	which is only possible if $\eta(v)\in\{y(1,1) : y\in\R\}$. However, by assumption there is a $(v_1,\ldots,v_d)\in\Supp(\mathcal{V})$ with
	\begin{align*}
	c\left(1-\sum_j v_j^2\right)-1\neq 0
	\end{align*}
	and thus $\eta(v)\notin\{y(1,1) : y\in\R\}$, a contradiction to $\Supp((W,X))\subset\{y(1,1) : y\in\R\}$. Thus we may conclude that $\Supp((W,X))$ is in general position. \cref{thm_A_conds} yields the assertion. 
\end{proof}

\section{Proofs}\label{sec_proofs}
\noindent
This section contains the proofs of the results in \cref{sec_main_thm}.
Recall $\phi_r(t):=\E[\exp(i\langle t, X_r\rangle)]$ for $t\in\R^d$.
First note that if $\phi_r$ is integrable with respect to the Lebesgue measure $\lambda^d$ on $\R^d$, then the distribution of $X_r$ admits a bounded density function $f_r$ given by the Fourier inversion formula:
\begin{align}\label{fourier_inv}
f_r(t)=
\frac 1 {(2\pi)^d} \int_{\R^d} \mathrm{e}^{-i\langle x,t\rangle} \phi_r(x) \mathrm{d}\lambda^d(x). 
\end{align}
Moreover, for any $\beta=(\beta_1,\ldots,\beta_d)\in\N_0^d$, a standard argument based on the Dominated Convergence Theorem reveals that the derivative $D^\beta f_r$ exists and is continuous if $x\mapsto x^\beta \phi_r (x)$ is integrable, where $x^\beta:=x_1^{\beta_1}\cdots x_d^{\beta_d}$ (see the proof of Lemma 2 in \citep[ Section XV.4]{fe71}, for example). Thus, the following implication holds for any $\eta>d$:
\begin{align}\label{tail_phi_impl_density}
|\phi_r(t)|=\bo\left(\|x\|^{-\eta}\right)\quad \Longrightarrow \quad \text{$X_r$ admits a density $f_r\in\mathcal{C}^{\lceil \eta\rceil -d-1}(\R^d)$.}
\end{align}
This observation can be extended to show that the class of Schwartz functions is preserved under Fourier transformation \citep[Theorem 7.4(d)]{Ru91}:
\begin{align}\label{iff_Schwartz_space}
\phi_r\text{ is a Schwartz function}\quad \Longleftrightarrow \quad X_r\text{ admits a Schwartz density.}
\end{align}
The upcoming bounds on $|\phi_r|$ are based on the following observation:
\begin{lem}\label{rec_char_fct}
Assume that $\eqref{main_rec}$ holds. Then 
\begin{align*}
|\phi_r(t)|\leq \E\left[\prod_{j=1}^{\infty} \left|\phi_{\ell_r(j)}\left(A_{r,j}^T t\right)\right|\right]
\quad \text{for all $t\in\R^d$ and $r\in[m]$}.
\end{align*}
Moreover, let $\psi_r(x):=\sup\limits_{\|t\|\geq x} | \phi_r(t)|$. Then,
\begin{align*}
\psi_r(x)\leq \E\left[\prod_{j=1}^{\infty} \psi_{\ell_r(j)}(\alpha_{r,j} x)\right]
\quad \text{for all $x\geq 0$ and $r\in[m]$}.
\end{align*}  
\end{lem}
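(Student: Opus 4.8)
The plan is to read off a recursion for $\phi_r$ directly from \eqref{main_rec} by conditioning on the coefficients $((A_{r,j})_{j\ge1},b_r)$, and then to convert the resulting pointwise estimate into the estimate for $\psi_r$ using the lower bound $\|A_{r,j}^T t\|\ge\alpha_{r,j}\|t\|$ built into the definition of $\alpha_{r,j}$ in \eqref{def_alpha_ij}. Concretely: fix $r\in[m]$ and $t\in\R^d$, and set $S_n:=\sum_{j=1}^n A_{r,j}X_{\ell_r(j)}^{(j)}+b_r$. By the standing convergence assumption $S_n$ tends a.s.\ to the right-hand side of \eqref{main_rec}, which has law $\mathcal{L}(X_r)$; since $|\mathrm{e}^{i\langle t,S_n\rangle}|=1$, dominated convergence gives $\E[\mathrm{e}^{i\langle t,S_n\rangle}]\to\phi_r(t)$. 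For fixed $n$ I would condition on $\mathcal{G}:=\sigma((A_{r,j})_{j\ge1},b_r)$: given $\mathcal{G}$ the matrices $A_{r,j}$ and the vector $b_r$ are deterministic while $X_{\ell_r(1)}^{(1)},\dots,X_{\ell_r(n)}^{(n)}$ are mutually independent and independent of $\mathcal{G}$, so, using $\langle t,A_{r,j}x\rangle=\langle A_{r,j}^T t,x\rangle$ and that, given $\mathcal{G}$, each $X_{\ell_r(j)}^{(j)}$ still has law $\mathcal{L}(X_{\ell_r(j)})$,
\begin{align*}
\E\big[\mathrm{e}^{i\langle t,S_n\rangle}\big]=\E\Big[\mathrm{e}^{i\langle t,b_r\rangle}\prod_{j=1}^n\phi_{\ell_r(j)}\big(A_{r,j}^T t\big)\Big].
\end{align*}
Taking absolute values and using $|\mathrm{e}^{i\langle t,b_r\rangle}|=1$ and $|\phi_{\ell_r(j)}|\le 1$ yields $\big|\E[\mathrm{e}^{i\langle t,S_n\rangle}]\big|\le\E\big[\prod_{j=1}^n|\phi_{\ell_r(j)}(A_{r,j}^T t)|\big]$.

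The one point that needs care is letting $n\to\infty$ on the right. For each realization of the coefficients, $\prod_{j=1}^n|\phi_{\ell_r(j)}(A_{r,j}^T t)|$ is a non-increasing sequence in $[0,1]$, hence converges to the (always well-defined) infinite product $\prod_{j=1}^\infty|\phi_{\ell_r(j)}(A_{r,j}^T t)|\in[0,1]$; since the partial products are dominated by $1$, dominated convergence moves the limit inside the expectation, and combining with $\E[\mathrm{e}^{i\langle t,S_n\rangle}]\to\phi_r(t)$ gives the first asserted inequality. I expect this interchange of expectation and infinite product to be the only real obstacle, and the observation that sidesteps it is that one works with the \emph{moduli}, for which the partial products are monotone — so no statement about convergence of the (complex) infinite product $\prod_j\phi_{\ell_r(j)}(A_{r,j}^T t)$ is required.

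For the bound on $\psi_r$, fix $x\ge 0$ and let $t$ be any vector with $\|t\|\ge x$. By \eqref{def_alpha_ij} we have $\|A_{r,j}^T t\|\ge\alpha_{r,j}\|t\|\ge\alpha_{r,j}x$ for every $j$, and since each $\psi_{\ell_r(j)}$ is non-increasing this gives $|\phi_{\ell_r(j)}(A_{r,j}^T t)|\le\psi_{\ell_r(j)}(\|A_{r,j}^T t\|)\le\psi_{\ell_r(j)}(\alpha_{r,j}x)$. Substituting into the first inequality yields $|\phi_r(t)|\le\E\big[\prod_{j=1}^\infty\psi_{\ell_r(j)}(\alpha_{r,j}x)\big]$ for every $t$ with $\|t\|\ge x$, and taking the supremum over all such $t$ gives $\psi_r(x)\le\E\big[\prod_{j=1}^\infty\psi_{\ell_r(j)}(\alpha_{r,j}x)\big]$, as claimed.
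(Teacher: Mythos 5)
Your proof is correct and takes essentially the same approach as the paper: condition on the coefficients $((A_{r,j})_{j\ge1},b_r)$, use conditional independence to factor the conditional characteristic function into a product, and pass to the limit with dominated convergence; the second inequality then follows from $\|A_{r,j}^T t\|\ge\alpha_{r,j}x$ exactly as in the paper. The only cosmetic difference is that you route the limiting argument through the partial sums $S_n$ and exploit the monotonicity of the moduli of the partial products, whereas the paper conditions directly on the infinite sum and invokes DCT for the exchange of the infinite product with the conditional expectation — your version makes that interchange a little more explicit, but the underlying idea is identical.
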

\begin{proof}
Equation \eqref{main_rec}, Jensen's inequality and the independence in \eqref{main_rec} imply
\begin{align*}
 |\phi_r(t)|\leq\E\left[\prod_{j=1}^{\infty} \left|\E\left[\exp(i\langle A_{r,j}^T t, X_{\ell_r(j)}^{(j)}\rangle)  \big| (A_{r,j})_{j\geq 1},b_r\right]\cdot\exp(i\langle t,b_r\rangle) \right|\right],
\end{align*}
in which the exchange of infinite product and conditional expectation also uses the Dominated Convergence Theorem. Note that the remaining conditional expectations equal $\phi_{\ell_r(j)}(A_{r,j}^T t)$ since  $X_{\ell_r(j)}^{(j)}$ and $((A_{r,j})_{j\geq 1},b_r)$ are independent. 
Therefore, the first bound of the claim follows since $|\exp(i\langle t,b_r\rangle)|=1$.

The bound on $\psi_r$ follows from the first result and $\|A_{r,j}^T t\|\geq \alpha_{r,j}  x$ for all $\|t\|\geq x$.
\end{proof}
The remainder of this section contains the missing proofs of Section \ref{sec_main_thm}. For the reader's convenience, Conditions \eqref{c1}-\eqref{c7} and all results are restated in this section.
\begin{defn}\label{def_c_p} 
Conditions \eqref{c1}-\eqref{c3} hold if for all $r\in[m]$:
\begin{flalign}
&\Prob(\amax_r>0)=1 ,&\label{c1p}\tag{C1}\\
&\E[N_r((0,1])]>1 ,&\label{c2p}\tag{C2}\\
&\Prob(\langle s, X_r \rangle \in \Z+c)<1\text{ for all }s\in\R^d\setminus\{{\bf 0}\}\text{ and }c\in\R .&\label{c3p}\tag{C3}
\end{flalign}
Let $\eta>0$. Conditions \eqref{c4}-\eqref{c6} hold for $\eta$ if for all $r\in[m]$:
\begin{flalign}
&\E[(\asec_r)^{-\eta} |\asec_r>0]<\infty ,&\label{c4p}\tag{C4}\\
&\Prob(\amax_r \leq x)= \bo(x^\eta)\text{ as } x\rightarrow 0,& \label{c5p}\tag{C5}\\
&\E[(\amax_r)^{-\eta}\Ind_{\{\asec_r=0\}}] <1.& \label{c6p}\tag{C6}
\end{flalign}
Finally, let $\chi:(0,\infty)\rightarrow(0,\infty)$ be a function. Condition \eqref{c7} holds for $\chi$ if for all $\beta>0$ a constant $C_\beta>0$ exists such that for all $x>0$ and $r\in[m]$
\begin{flalign}
& \E\left[ \prod_{j\geq 1} \left((\alpha_{r,j} x)^{-\beta} \wedge 1 \right)\right]\leq C_{\beta} x^{-\chi(\beta)}. &\label{c7p}\tag{C7}
\end{flalign}
\end{defn}
\begin{lem}\label{lem_conv_zero_p}
Assume \eqref{c1}, \eqref{c2} and \eqref{c3}. Then, 
\begin{align*}
\lim_{R\rightarrow\infty} \sup_{\|t\|=R} \left| \phi_r(t)\right| =0
\text{ for all } r\in[m].
\end{align*}
\end{lem}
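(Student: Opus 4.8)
The plan is to combine the self-bounding inequality for $\psi_r$ from \cref{rec_char_fct} with a suitable measure of ``how much contraction'' the coefficients provide, in the spirit of Liu \cite{liu01}. Recall $\psi_r(x)=\sup_{\|t\|\geq x}|\phi_r(t)|$ is nonincreasing, bounded by $1$, and right-continuous; set $\psi_\infty:=\lim_{x\to\infty}\psi_r(x)$ (a priori this limit could depend on $r$, but since the $m$ equations are coupled I would actually work with $\overline\psi:=\max_r \lim_{x\to\infty}\psi_r(x)$ and aim to show $\overline\psi=0$). The goal is to show $\overline\psi = 0$; the bound $\psi_r(x)\le \E\big[\prod_j \psi_{\ell_r(j)}(\alpha_{r,j}x)\big]$ is the engine.

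First I would establish the \emph{non-lattice / analyticity} ingredient: condition \eqref{c3} says each $X_r$ is non-lattice, so $|\phi_r(t)|<1$ for every $t\neq 0$; moreover, by a standard argument, for every $R_0>0$ one has $\sup_{0<\|t\|\le R_0}|\phi_r(t)|<1$ provided $X_r$ is not concentrated on a proper affine sublattice — but here one must be a bit careful because $\phi_r$ could approach $1$ near $t=0$. The crucial consequence I want is: for every $0<a<A$, $c(a,A):=\max_r \sup_{a\le \|t\|\le A}|\phi_r(t)|<1$ (compactness of the annulus plus continuity and $|\phi_r|<1$ off the origin). This localized strict bound, together with \eqref{c1}--\eqref{c2}, is what lets the contraction propagate the ``smallness near infinity'' down to moderate frequencies.

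Next I would exploit \eqref{c1} and \eqref{c2}. Condition \eqref{c1}, $\Prob(\amax_r>0)=1$, guarantees that a.s.\ at least one coefficient $\alpha_{r,j}$ is strictly positive, so at least one factor $\psi_{\ell_r(j)}(\alpha_{r,j}x)$ in the product genuinely contributes (its argument tends to $\infty$ with $x$). Condition \eqref{c2}, $\E[N_r((0,1])]>1$ where $N_r((0,1])$ counts indices $j$ with both $\alpha_{r,j}\in(0,1]$ and $\|A_{r,j}^T\|_{\mathrm{op}}\in(0,1]$, is the branching-supercriticality condition: on average strictly more than one ``active'' coefficient with $\alpha_{r,j}\le 1$. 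Taking $x\to\infty$ in the product bound and using $\psi\le 1$ together with monotone/dominated convergence, each factor with $\alpha_{r,j}>0$ tends to $\psi_{\ell_r(j)}(\infty)\le\overline\psi$, so formally $\overline\psi \le \E[\overline\psi^{\,N_r((0,1])}]$ — heuristically $\overline\psi\le \E[\overline\psi^{\,N}]$ with $\E[N]>1$, which forces $\overline\psi\in\{0,1\}$, and the non-lattice bound $c(a,A)<1$ on an annulus rules out $\overline\psi=1$. Making this rigorous requires keeping one factor at a moderate frequency (where $|\phi|\le c<1$) while sending the argument of another active factor to infinity, i.e.\ a two-scale argument: pick $x$ large, split the sum over $j$ into those with $\alpha_{r,j}$ bounded below (finitely many, giving genuine decay) and a tail bounded by $1$.

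I expect the main obstacle to be the passage to the limit inside the expectation and the random, possibly infinite, number of factors: one has to justify exchanging $\lim_{x\to\infty}$ with $\E[\cdot]$ and with the infinite product, controlling the tail $j\to\infty$ where $\alpha_{r,j}\to 0$ (so $\psi_{\ell_r(j)}(\alpha_{r,j}x)$ need not be small), using only that each factor is $\le 1$ and that at least one — and on average more than one — factor with $\alpha_{r,j}\in(0,1]$ is present. The clean way is probably to first prove $\overline\psi<1$ using \eqref{c1} and the annulus bound $c(a,A)<1$ (this only needs one active factor pushed to a moderate frequency), and then bootstrap with \eqref{c2} to upgrade $\overline\psi<1$ to $\overline\psi=0$ via the inequality $\overline\psi\le \E[\overline\psi^{\,N_r((0,1])}] \le \E[\overline\psi \cdot \overline\psi^{\,N_r((0,1])-1}\Ind_{N_r\ge1}]$, which for $\overline\psi\in(0,1)$ contradicts $\E[N_r((0,1])]>1$ unless $\overline\psi=0$. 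The secondary technical point is relating $\|A_{r,j}^T t\|\ge \alpha_{r,j}\|t\|$ cleanly with the $\|A_{r,j}^T\|_{\mathrm{op}}\le 1$ restriction implicit in $N_r((0,1])$, so that moderate frequencies stay moderate under the map $t\mapsto A_{r,j}^T t$.
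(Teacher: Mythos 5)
Your overall plan — use the self-bounding inequality for $\psi_r$, exploit \eqref{c1}--\eqref{c2} for a dichotomy $\overline\psi\in\{0,1\}$, and use \eqref{c3} together with a two-scale argument to exclude $\overline\psi=1$ — points in the right direction and is indeed the shape of the paper's proof (which follows Liu \cite{liu01} and Doney). But there are two genuine gaps in the way you propose to execute it.

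First, the dichotomy step as you state it is false: $\overline\psi\le\E[\overline\psi^{\,N}]$ with $\E[N]>1$ does \emph{not} force $\overline\psi\in\{0,1\}$ when $\Prob(N=0)>0$. The probability generating function $G(s)=\E[s^N]$ is convex with $G(1)=1$, $G'(1)>1$, and $G(0)=\Prob(N=0)$; when $\Prob(N=0)>0$ the extinction probability $q$ is strictly positive and $s\le G(s)$ for every $s\in[0,q]$. Since $N_r((0,1])$ may well be $0$ with positive probability (e.g.\ if $\amax_r>1$ on an event of positive probability), using $N_r((0,1])$ here breaks. The paper avoids this by working with $|I_r|=\#\{j:\alpha_{r,j}>0\}$, for which \eqref{c1} gives $\Prob(|I_r|=0)=0$, so $G(0)=0$, convexity gives $G(s)<s$ on $(0,1)$, and one cleanly gets $\xi\le\xi^2$, hence $\xi\in\{0,1\}$. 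Condition \eqref{c2} enters only to guarantee $\Prob(|I_r|\ge2)>0$ via $|I_r|\ge N_r((0,1])$; it is not the source of $\Prob(N=0)=0$.

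Second, and more seriously, you do not actually have a mechanism to rule out $\overline\psi=1$, and the alternative plan (``first prove $\overline\psi<1$ using \eqref{c1} and the annulus bound, by pushing one active factor to a moderate frequency'') cannot work as stated. For a fixed $x$, each argument $\alpha_{r,j}x$ is either $0$ (when $\alpha_{r,j}=0$) or tends to $\infty$ as $x\to\infty$; there is no single $j$ you can ``keep at a moderate frequency'' uniformly in $x$. The indices that land in a moderate window depend on $x$ and on the random sizes of the $\alpha_{r,j}$, so the number of useful factors is itself random. This is precisely what the paper's construction handles: assuming $\xi=1$, it builds sequences $R_1^{(n)}\to 0$ and $R_2^{(n)}\to\infty$ with $g(R_1^{(n)})=g(R_2^{(n)})=1-1/n$ and $g\le 1-1/n$ on $[R_1^{(n)},R_2^{(n)}]$ (possible by continuity, $g(0)=1$, \eqref{pf_f_less_1}, and $\limsup_{R\to\infty}g(R)=1$). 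Evaluating the recursion at $\|t_n\|=R_2^{(n)}$, every index $j$ with $\alpha_{r,j}\ge R_1^{(n)}/R_2^{(n)}$ and $\|A_{r,j}^T\|_{\mathrm{op}}\le 1$ produces a factor $\le 1-1/n$; the count $N^{(n)}_{r}$ of such $j$ increases a.s.\ to $N_r((0,1])$ because the threshold $R_1^{(n)}/R_2^{(n)}\to 0$. The bound $1-\tfrac1n\le\E[(1-\tfrac1n)^{N^{(n)}_{r_n}}]$ then contradicts $\E[N_r((0,1])]>1$ for large $n$ (after a truncation $N\wedge c$ and a second-order expansion of $(1-1/n)^x$). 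This quantitative, $n$-dependent window — not a single-factor argument — is the missing key idea in your sketch; simply asserting ``prove $\overline\psi<1$ first'' is circular, since that is the hard part of the whole lemma.
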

As a preparation for the proof, recall the following simple property of complex valued random variables.
\begin{lem}\label{lem_C_leq1}
Let $Z$ be a $\C$-valued random variable with $|Z|\leq 1$ a.s.~and $\E[Z]=1$. Then $Z=1$ almost surely.
In particular, every $\R^d$-valued random variable $X$ with a non-lattice distribution (cf.~\cref{def_nonlattice}) satisfies $|\E[\exp(i\langle t,X\rangle)]<1$ for all $t\neq {\bf 0}$. 
\end{lem}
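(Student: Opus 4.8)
The plan is to prove the first assertion by the elementary ``nonnegative random variable with zero mean'' argument applied to the real part of $Z$, and then to obtain the second assertion by reducing it to the first one via a rotation that removes the phase of the characteristic function.

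First I would take real parts in the identity $\E[Z]=1$ to get $\E[\Re(Z)]=1$. Since $|Z|\le 1$ almost surely, we have $\Re(Z)\le |Z|\le 1$ almost surely, so $1-\Re(Z)$ is a nonnegative random variable with $\E[1-\Re(Z)]=0$; hence $\Re(Z)=1$ almost surely. Combining $\Re(Z)=1$ with $\Re(Z)^2+\Im(Z)^2=|Z|^2\le 1$ forces $\Im(Z)^2\le 0$, i.e.\ $\Im(Z)=0$ almost surely, and therefore $Z=1$ almost surely.

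For the ``in particular'' part, fix $t\neq{\bf 0}$ and write $\E[\exp(i\langle t,X\rangle)]=\rho\,\mathrm e^{i\theta}$ with $\rho:=|\E[\exp(i\langle t,X\rangle)]|\in[0,1]$ and some $\theta\in\R$. Suppose, for contradiction, that $\rho=1$. Then $Z:=\mathrm e^{-i\theta}\exp(i\langle t,X\rangle)$ satisfies $|Z|=1$ almost surely and $\E[Z]=\mathrm e^{-i\theta}\rho\,\mathrm e^{i\theta}=1$, so the first part gives $Z=1$ almost surely, i.e.\ $\langle t,X\rangle\in\theta+2\pi\Z$ almost surely. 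Setting $s:=t/(2\pi)\neq{\bf 0}$ and $c:=\theta/(2\pi)$ yields $\Prob(\langle s,X\rangle\in\Z+c)=1$, contradicting the non-lattice assumption (\cref{def_nonlattice}). Hence $\rho<1$, which is the claim.

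The argument is routine; the only point requiring a little care is the bookkeeping of the phase $\theta$, so that the rotated variable $Z$ has expectation exactly $1$, together with the correct normalisation by $2\pi$ when translating ``$Z=1$ almost surely'' into the lattice condition. No essential obstacle is expected.
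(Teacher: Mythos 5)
Your proof is correct and follows essentially the same route as the paper: take real parts to force $\Re(Z)=1$ a.s., deduce $\Im(Z)=0$ from $|Z|\le 1$, and then for the second part rotate by the phase of the characteristic function and apply the first part to contradict the non-lattice assumption. The only difference is that you spell out the $2\pi$ normalisation when passing from $\langle t,X\rangle\in\theta+2\pi\Z$ to the condition $\Prob(\langle s,X\rangle\in\Z+c)=1$, which the paper leaves implicit.
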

\begin{proof}
	For the first part note that the conditions on $Z$ imply $\E[\Re(Z)]=1$ and $|\Re(Z)|\leq 1$. Thus $\Re(Z)=1$ almost surely and therefore $\Im(Z)=0$ since $|Z|\leq 1$.
	
	For the second part assume for the sake of contradiction that 
	$\E[\exp(i\langle t ,X\rangle)]=\exp(i\beta)$ for some $t\neq {\bf 0}$ and $\beta\in [0,2\pi)$. Let $X^\prime=\langle t ,X\rangle -\beta$. Then $\E[\exp(iX^\prime)]=1$ and thus $\exp(iX^\prime)=1$ almost surely, contradicting the non-lattice assumption.	
\end{proof}
\begin{proof}[Proof of \cref{lem_conv_zero_p}]
The proof is based on the ideas of \citep[Lemma 6.2]{Don72} (cf.~also \citep[Lemma 3.1]{liu01}):
Let 
\begin{align*}
 g_r:[0,\infty)\rightarrow [0,1],\,R\mapsto \sup_{\|t\|=R} |\phi_r(t)|,\; r\in[m],\qquad g:=\max_{r\in[m]} g_r.
\end{align*}
Lemma \ref{rec_char_fct} implies for any $r\in[m]$ and $t\in\R^d$
\begin{align}
 |\phi_r(t)| \leq \E\left[\prod_{j=1}^{\infty} \left|\phi_{\ell_r(j)}\left(A_{r,j}^T t\right)\right|\right]\leq \E\left[\prod_{j\in I_r} \left|\phi_{\ell_r(j)}\left(A_{r,j}^T t\right)\right|\right]\label{pf_4_1_rec_bound}
\end{align}
with $I_r=\{j\in \N : \alpha_{r,j}>0\}$. Note that $\|A_{r,j}^T t\|\rightarrow\infty$ for all $j\in I_r$ almost surely as $\|t\|\rightarrow\infty$.
Now let\begin{align*}
 \xi_r:=\limsup_{R\rightarrow\infty} g_r(R),\; r\in[m],\qquad \xi=\max_{r\in[m]} \xi_r = \limsup_{R\rightarrow\infty} g(R).
\end{align*}
The choice of $I_r$ yields that almost surely
\begin{align}\label{pf_5_3_added}
\limsup_{t\rightarrow\infty}\prod_{j\in I_r} \left|\phi_{\ell_r(j)}\left(A_{r,j}^T t\right)\right|\leq \limsup_{t\rightarrow\infty} \prod_{j\in I_r} g_{\ell_r(j)}(\|A_{r,j}^T t\|) \leq \prod_{j\in I_r} \xi_{\ell_r(j)} \leq \xi^{|I_r|}.
\end{align}
Now let $t_R^{(r)}$ be chosen in such a way that $\phi_r(t_R^{(r)})=g_r(R)$ for $R\geq 0$. Then, since $g_r$ is bounded by $1$, the Dominated Convergence Theorem and \eqref{pf_4_1_rec_bound} imply
\begin{align*}
\xi_r\leq \E\left[\limsup_{R\rightarrow\infty}\prod_{j\in I_r} \left|\phi_{\ell_r(j)}\left(A_{r,j}^T t_R^{(r)}\right)\right|\right].
\end{align*}
Hence \eqref{pf_5_3_added} yields
\begin{align*}
\xi_r \leq \E\left[\xi^{|I_r|}\right]\leq \Prob(|I_r|=0)+\xi\Prob(|I_r|=1) +\xi^2 \Prob(|I_r|\geq 2),
\end{align*}
in which the second inequality also uses $\xi\in [0,1]$ and thus $\xi^y\leq \xi^{\min(2,y)}$ for $y\in \N$.
Note that $\Prob(|I_r|=0)=0$ by condition \eqref{c1} and that  $\Prob(|I_r|\geq 2)>0$ by condition \eqref{c2} and the fact that $|I_r|\geq N_r((0,1])$. Thus, the previous bound yields 
\begin{align}\label{pf_53_xi}
\xi\leq \Prob(|I_s|=1) \xi + (1-\Prob(|I_s|=1))\xi^2\quad\text{where } s\in\argmax_{r\in [m]} \xi_r. 
\end{align}
Recalling $\Prob(|I_s|=1)<1$ and $\xi\in[0,1]$, \eqref{pf_53_xi} implies $\xi\leq \xi^2$ and therefore $\xi\in\{0,1\}$.

It remains to show $\xi<1$ which is done by contradiction. Observe that condition \eqref{c3} implies 
\begin{align}\label{pf_f_less_1}
g(R)<1\quad\text{ for all $R>0$}
\end{align}
which can be seen as follows: First note that $|\phi_r(t)|<1$ for all $t\neq {\bf 0}$ by \cref{lem_C_leq1} and \eqref{c3}. Since $\{t\in\R^d:\|t\|=R\}$ is a compact set and $t\mapsto|\phi_r(t)|$ is continuous, one obtains \eqref{pf_f_less_1}.
Now suppose for the sake of contradiction that $\xi=1$. Fix any $R_0>0$.
Choose $R_1^{(n)}$ and $R_2^{(n)}$ for all integers $n$ with $1-1/n \geq g(R_0)$ in such a way that the following holds:
\begin{align*}
R_1^{(n)}\leq R_0\leq R_2^{(n)},\quad g\left(R_1^{(n)}\right)=g\left(R_2^{(n)}\right)=1-\frac 1 n,\quad g(R)\leq 1-\frac 1 n \text{ for }R\in \left[R_1^{(n)},R_2^{(n)}\right].
\end{align*}
This is possible since $g$ is continuous and $g(0)=\xi=1$ by assumption. Note that $(R_1^{(n)})_n$ is a nonnegative, non-increasing sequence. Thus $R_1^{(n)}$ converges to a limit $R_1^*$. However, the continuity of $g$ implies $g(R_1^*)=\lim\limits_{n\rightarrow\infty}g(R_1^{(n)})=1$
and therefore $R_1^*=0$ by \eqref{pf_f_less_1}. Moreover, the sequence $(R_2^{(n)})_n$ 
is bounded from below by $R_0$ and thus it
cannot be convergent since this would contradict \eqref{pf_f_less_1} and the continuity of $g$.
Therefore, $R_2^{(n)}\rightarrow\infty$ since $(R_2^{(n)})_n$ is non-decreasing.
Hence, the sequences satisfy, as $n\rightarrow\infty$,
\begin{align}\label{convergence_sequences_R}
 R_1^{(n)}\rightarrow 0 \quad \text{ and }\quad R_2^{(n)}\rightarrow\infty.
\end{align}
Now let $t_n$ and $r_n$ be chosen in such a way that $\|t_n\|=R_2^{(n)}$ and $g(R_2^{(n)})=|\phi_{r_n}(t_n)|$. Then, 
 by Lemma \ref{rec_char_fct},
 \begin{align}\label{contradic_pf_lem_0}
 1-\frac 1 n =|\phi_{r_n}(t_n)|\leq  \E\left[\prod_{j=1}^{\infty} \left|\phi_{\ell_{r_n}(j)}\left(A_{r_n,j}^T t_n\right)\right|\right]
 \leq  \E\left[\left(1-\frac 1 n \right)^{N_{r_n}^{(n)}}\right]
 \end{align}
 with $N_r^{(n)}=\sum_{j\geq 1} \Ind_{\{\alpha_{r,j} \geq R_1^{(n)}/R_2^{(n)}\}\cap\{ \|A_{r,j}^T\|_{\mathrm{op}}\leq 1 \}}$ for $r\in [m]$. Note that $N_r^{(n)}\rightarrow N_r((0,1])$ almost surely as $n\rightarrow\infty$, since $R_1^{(n)}/R_2^{(n)}\rightarrow 0$.
  However, \eqref{contradic_pf_lem_0} and $\E[N_r((0,1])]>1$ lead to a contraction for large $n$, which can be seen as follows:
 
 First let $c, n_0\in\N$ and $\varepsilon>0$ be chosen in such a way that 
 \begin{align}\label{choice_c_n_xxx}
 \E[N_{r_n}^{(n)}\wedge c]>1+\varepsilon\quad \text{for all }n\geq n_0.
 \end{align}
  This is possible due to the fact that (a) $\E[N_r((0,1])]>1$ and the Monotone Convergence Theorem imply $\E[N_r((0,1])\wedge c]>1$ for sufficiently large $c$, and (b) $\E[N_r^{(n)}\wedge c]\rightarrow\E[N_r((0,1])\wedge c]$ as $n\rightarrow\infty$ by the Dominated Convergence Theorem.
  
  Next note that $x\mapsto (1-1/n)^x$ is decreasing in $x$ and that
  \begin{align}
  \left(1-\frac 1 n \right)^x\leq 1-\frac x n + \frac{2^x} {n^2}\quad \text{for } x\in\N,
  \end{align}
  e.g., using the Binomial Theorem. Hence
  \begin{align*}
  \E\left[\left(1-\frac 1 n \right)^{N_{r_n}^{(n)}}\right]\leq \E\left[\left(1-\frac 1 n \right)^{N_{r_n}^{(n)}\wedge c}\right]
  \leq 1-\frac {\E[N_{r_n}^{(n)}\wedge c]} n+\frac{2^c}{n^2},
  \end{align*}
  which is less than $1-1/n$ for large $n$ by \eqref{choice_c_n_xxx}. Therefore \eqref{contradic_pf_lem_0} leads to a contraction and thus $\xi<1$. Since $\xi\in\{0,1\}$, we obtain $\xi=0$ as claimed. 
 \end{proof}

\begin{prop}\label{thm_poly_decay_p} Assume \eqref{c1}-\eqref{c6} with $\eta>0$. Then,
\begin{align*}
|\phi_r(t)|=\bo\left(\|t\|^{-\eta}\right)\text{ for all $r\in[m]$ as } \|t\|\rightarrow\infty.
\end{align*}
If $\eta>1$, then $X_r$ admits a bounded density function $f_r\in\mathcal{C}^{\lceil \eta \rceil -d-1}(\R^d)$ for all $r\in[m]$.
\end{prop}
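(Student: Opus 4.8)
The plan is to reduce the proposition to a polynomial tail bound on the characteristic functions $\phi_r$ and then quote the Fourier--inversion implication \eqref{tail_phi_impl_density}. Since \eqref{tail_phi_impl_density} turns any estimate $|\phi_r(t)|=\bo(\|t\|^{-\eta})$ into a density $f_r\in\mathcal{C}^{\lceil\eta\rceil-d-1}(\R^d)$ once $\eta>d$, the entire content is the bound $|\phi_r(t)|=\bo(\|t\|^{-\eta})$. Put $\psi_r(x):=\sup_{\|t\|\geq x}|\phi_r(t)|$ --- the non-increasing, $[0,1]$-valued function of \cref{rec_char_fct} --- and $\Psi:=\max_{r\in[m]}\psi_r$; it suffices to prove $\Psi(x)=\bo(x^{-\eta})$. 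Two ingredients are at hand. First, $\Psi(x)\to 0$ as $x\to\infty$: this is \cref{lem_conv_zero_p}, available because \eqref{c1p}--\eqref{c3p} hold. Second, the submultiplicative recursion $\psi_r(x)\leq\E\big[\prod_{j\geq 1}\psi_{\ell_r(j)}(\alpha_{r,j}x)\big]\leq\E\big[\Psi(\amax_r x)\,\Psi(\asec_r x)\big]$, where the first step is \cref{rec_char_fct} and the second discards all but the two largest factors (each of the rest being $\leq 1$).

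From here I would run a bootstrap: starting from $\Psi\leq 1$, feed bounds of the form $\Psi(x)\leq D\min(1,x^{-s})$ back through the recursion, improving the exponent $s$ until it reaches $\eta$. Split the expectation over $\{\asec_r=0\}$ and $\{\asec_r>0\}$. On $\{\asec_r=0\}$ only the single factor $\Psi(\amax_r x)$ survives, and \eqref{c6p} gives a genuinely contractive constant at exponent $\eta$; since the weights satisfy $\amax_r\leq 1$ we moreover get $\E[(\amax_r)^{-s}\Ind_{\{\asec_r=0\}}]\leq\E[(\amax_r)^{-\eta}\Ind_{\{\asec_r=0\}}]<1$ for all $s\leq\eta$, and \eqref{c5p} bounds the sub-event on which $\amax_r x$ has slipped below the scale where the current bound is informative, contributing only an $\bo(x^{-\eta})$ error. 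On $\{\asec_r>0\}$ two shrinking arguments are present, and using, e.g., $\min(1,a)\min(1,b)\leq\min(1,ab)$ one extracts a strictly faster power of $x$ there; \eqref{c4p} is precisely what makes the relevant negative moment of $\asec_r$ finite on that event.

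Combining the two cases I expect to obtain, for an auxiliary quantity such as $\Phi(x_0):=\sup_{y\geq x_0}y^{\eta}\Psi(y)$ (or the analogous supremum at the current working exponent), a self-bound $\Phi\leq q\,\Phi+(\text{finite explicit})$ with $q<1$ inherited from \eqref{c6p}; iterating this, while letting the working exponent rise to $\eta$, yields $\Phi(x_0)<\infty$ for some $x_0>0$, i.e.\ $\Psi(x)=\bo(x^{-\eta})$. Feeding this back into \eqref{tail_phi_impl_density} completes the proof.

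The hard part is making this self-consistency estimate genuinely close. Condition \eqref{c6p} is strict and is tailored to the event $\{\asec_r=0\}$, which carries the critical exponent; but \eqref{c4p} only supplies finiteness, not smallness, of the negative moment on $\{\asec_r>0\}$, so one must be careful about which of $\amax_r$ and $\asec_r$ serves as the ``tail scale'' on which event, and exploit that on $\{\asec_r>0\}$ both weights are $\leq 1$ and $\amax_r\geq\asec_r$ (so two factors really do beat the single critical power whenever the working exponent is $<\eta$). If one pass is not enough, I would iterate the recursion a bounded number of times before taking expectations --- a branching-random-walk style expansion over products of weights along paths --- so that only the ``slow'' $\{\asec_r=0\}$ contributions keep the exponent $\eta$ and are genuinely damped by \eqref{c6p}. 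Tracking the thresholds and the errors from arguments that fall below them, and reaching the exponent $\eta$ exactly rather than only every $s<\eta$, is where the bookkeeping is heaviest.
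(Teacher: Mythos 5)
Your setup matches the paper's proof closely: the quantity $\psi_r(x)=\sup_{\|t\|\geq x}|\phi_r(t)|$, the recursion from \cref{rec_char_fct} reduced to the two largest factors $\psi(\amax_r x)\psi(\asec_r x)$, the decomposition over $\{\asec_r=0\}$ versus $\{\asec_r>0\}$, the use of \eqref{c5} to control the sub-event $\{\amax_r x\text{ small}\}$, and the roles of \eqref{c4} and \eqref{c6}. But there is a genuine gap, which you yourself flag at the end, and it is not just bookkeeping. Your main engine on $\{\asec_r>0\}$ is to squeeze a better exponent out of the product $\Psi(\amax_r x)\Psi(\asec_r x)\leq \min(1,(\amax_r\asec_r x^2)^{-s})$; this is what stalls, because the negative moments of $\asec_r$ are controlled only up to order $\eta$ by \eqref{c4}, so the exponent gain degenerates as the working exponent $s$ approaches $\eta$, and you never reach $\eta$ itself. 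Your alternative, the self-bound $\Phi\leq q\Phi+C$ on $\Phi(x_0)=\sup_{y\geq x_0}y^\eta\Psi(y)$, has two unresolved issues: first, on $\{\asec_r>0\}$ the natural contribution is $c_2:=\E[(\asec_r)^{-\eta}\Ind_{\asec_r>0}]$, which \eqref{c4} only makes finite, not small, so the ``$q<1$ inherited from \eqref{c6}'' is not actually delivered ($q=c_1+c_2$ could exceed $1$); second, $\Phi(x_0)$ is not a priori finite, so the inequality $\Phi\leq q\Phi+C$ does not close.

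The paper's proof resolves both points with one quantitative move you do not make: from \cref{lem_conv_zero_p}, for \emph{any} $\varepsilon>0$ there is $x_0$ with $\psi(y)\leq\varepsilon$ for $y\geq x_0$, and on the event $\{\amax_r x\geq x_0\}\cap\{\asec_r>0\}$ this turns the factor $\psi(\amax_r x)$ into a multiplicative $\varepsilon$, so the effective contraction constant on $\{\asec_r>0\}$ becomes $\varepsilon\,c_2$, which is made arbitrarily small by shrinking $\varepsilon$ (not by improving an exponent). Combined with $c_1<1$ from \eqref{c6}, this gives $c_1+c_2\varepsilon<1$ at the \emph{fixed} exponent $\eta$. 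The a priori finiteness problem is then sidestepped by an induction on the number $n$ of iterations of the recursion, with the trivial bound $\psi\leq 1$ as base case: one proves $\psi(x)\leq C(x_0/x)^\eta\sum_{j=0}^{n-1}(c_1+c_2\varepsilon)^j+(p+\varepsilon)^n$, where $p<1$ bounds $\Prob(\asec_r=0)$ (from \eqref{c2}). Letting $n\to\infty$ with $\varepsilon<\min(1-p,(1-c_1)/c_2)$ yields the claimed $\psi(x)=\bo(x^{-\eta})$, and \eqref{tail_phi_impl_density} finishes as you say. In short: your decomposition and use of the conditions are correct, but you need the $\varepsilon$-smallness from \cref{lem_conv_zero_p} as a multiplier on the $\{\asec_r>0\}$ branch, and an $n$-indexed induction starting from $\psi\leq 1$ to avoid assuming finiteness of what you are trying to bound.
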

\begin{proof} 
Let $\psi_r(x)=\sup_{\|t\|\geq x} |\phi_r(t)|$ and  $\psi=\max_{r\in [m]} \psi_r$.
Recall that Lemma \ref{rec_char_fct} yields for $x\geq 0$ and $r\in[m]$
\begin{align}\label{pf_poly_decay_1}
\psi_r(x)\leq \E\left[\prod_{j=1}^{\infty} \psi_{\ell_r(j)}(\alpha_{r,j} x)\right]\leq \E\left[\psi (\amax_r x) \psi(\asec_r x)\right].
\end{align}  
Lemma \ref{lem_conv_zero_p} implies for any $\varepsilon>0$ the existence of a constant $x_0=x_0(\varepsilon)$ such that $\psi(x)\leq \varepsilon$ for $x\geq x_0$. Thus, \eqref{pf_poly_decay_1} yields for $x>0$
\begin{align*}
\psi_r(x)\leq \Prob(\amax_r \leq x_0 /x )
+ \varepsilon \E\left[ \psi(\asec_r x) \Ind_{\{\asec_r>0\}}\right]
+\E\left[\psi (\amax_r x) \Ind_{\{\asec_r=0\}}\right].
\end{align*}
Let $r(x)=\mathrm{arg}\max\limits_{r\in [m]}\E\left[\psi (\amax_r x) \Ind_{\{\asec_r=0\}}\right]$
and $s(x)=\mathrm{arg}\max\limits_{s\in[m]}\E\left[ \psi(\asec_s x) \Ind_{\{\asec_s>0\}}\right]$.
Then, the previous bound and condition \eqref{c5} imply the existence of a constant $C>0$ such that 
\begin{align}\label{pf_poly_decay_2}
\psi(x)\leq C \left(\frac {x_0} x\right)^\eta+ \E\left[\psi (\amax_{r(x)} x) \Ind_{\{\asec_{r(x)}=0\}}\right]+ \varepsilon \E\left[ \psi(\asec_{s(x)} x) \Ind_{\{\asec_{s(x)}>0\}}\right],\quad x>0.
\end{align}
By assumption \eqref{c4} and \eqref{c6} there are constants $c_1\in(0,1)$ and $c_2>0$ such that
\begin{align}\label{pf_poly_decay_3}
\E\left[(\amax_r)^{-\eta}\Ind_{\{\asec_{r}=0\}}\right]\leq c_1,\qquad \E\left[(\asec_{r})^{-\eta}\Ind_{\{\asec_{r}>0\}}\right]\leq c_2,\quad\text{for all $r\in [m]$.}
\end{align}
Moreover, \eqref{c2} implies the existence of another constant $p<1$ such that $\Prob(\asec_r=0)\leq p$ for all $r\in [m]$. Hence, Equation \eqref{pf_poly_decay_2} and the trivial upper bound $\psi\leq 1$ yield
\begin{align*}
\psi(x)\leq C \left(\frac {x_0} x\right)^\eta+ p +\varepsilon,\quad x>0.
\end{align*}
We end the proof by showing by induction on $n$ that 
\begin{align}\label{IV_pf_5_5}
\psi(x)\leq C \left(\frac{x_0} x \right)^\eta \sum_{j=0}^{n-1} (c_1+c_2\varepsilon)^j + (p+\varepsilon)^n,\quad n\in\N.
\end{align}
Note that this implies the assertion when choosing $\varepsilon< \min(1-p, (1-c_1)/c_2)$ and letting $n\rightarrow\infty$. We already deduced \eqref{IV_pf_5_5} for $n=1$. Now assume the bound holds for some $n$. Then, using \eqref{pf_poly_decay_2} and the induction hypothesis,
\begin{align*}
\psi(x)&\leq C \left(\frac {x_0} x\right)^\eta+ 
C \left(\frac {x_0} x\right)^\eta\E\left[(\amax_{r(x)})^{-\eta}\Ind_{\{\asec_{r(x)}=0\}}\right] \sum_{j=0}^{n-1} (c_1+c_2\varepsilon)^j + (p+\varepsilon)^n\Prob\left(\asec_{r(x)}=0\right)\\
&\qquad +
\varepsilon
C \left(\frac {x_0} x\right)^\eta\E\left[(\asec_{s(x)})^{-\eta}\Ind_{\{\asec_{s(x)}>0\}}\right] \sum_{j=0}^{n-1} (c_1+c_2\varepsilon)^j + \varepsilon(p+\varepsilon)^n\Prob\left(\asec_{s(x)}>0\right).
\end{align*}
Hence \eqref{pf_poly_decay_3}, $\Prob\left(\asec_{s(x)}>0\right)\leq 1$ and $\Prob\left(\asec_{r(x)}=0\right)\leq p$ yield
\begin{align*}
\psi(x)\leq C \left(\frac{x_0} x \right)^\eta \sum_{j=0}^{n} (c_1+c_2\varepsilon)^j + (p+\varepsilon)^{n+1}.
\end{align*}
Therefore \eqref{IV_pf_5_5} follows by induction as claimed.

Finally, note that the existence of a density function and its derivatives up to order $\lceil \eta \rceil-d -1$ follows, as already noted in \eqref{tail_phi_impl_density}.
\end{proof}

\begin{prop}\label{coro_smooth_density_p}
Assume \eqref{c1}-\eqref{c6} and \eqref{c7} with $\eta$ and $\chi$ such that $\limsup\limits_{n\rightarrow\infty} \chi^{n}(\eta) =\infty$. Then, for all $\beta>0$, 
\begin{align*}
|\phi_r(t)|=\bo\left( \|t\|^{-\beta}\right)\text{ for all $r\in[m]$ as } \|t\|\rightarrow\infty.
\end{align*}
In particular, $X_r$ admits a bounded density $f_r\in\mathcal{C}^\infty(\R^d)$ for all $r\in [m]$.
\end{prop}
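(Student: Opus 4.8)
The plan is to use \cref{thm_poly_decay_p} as a base case and then bootstrap it with \eqref{c7} via the recursive inequality of \cref{rec_char_fct}. Throughout write $\psi_r(x):=\sup_{\|t\|\ge x}|\phi_r(t)|$ and $\psi:=\max_{r\in[m]}\psi_r$; both functions are non-increasing and bounded by $1$. Since \eqref{c1}--\eqref{c6} are assumed, \cref{thm_poly_decay_p} already gives $\psi(x)=\bo(x^{-\eta})$, so there is a constant $K\ge 1$ with $\psi(x)\le Kx^{-\eta}$ for all $x\ge 1$; using $\psi\le 1$ on $(0,1]$ and $K\ge 1$, this upgrades to $\psi(y)\le\min(1,Ky^{-\eta})$ for \emph{all} $y>0$. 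The only genuine trick in the whole argument is the elementary identity
\begin{align*}
\min(1,Ky^{-a})=\bigl((y/K^{1/a})^{-a}\wedge 1\bigr),\qquad K,a>0,
\end{align*}
which says that a global polynomial bound on $\psi$ is, up to rescaling its argument, exactly of the shape appearing inside the expectation in \eqref{c7}.

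Next I would isolate the bootstrap step: if $\psi(y)\le\min(1,Ky^{-a})$ for all $y>0$, with $a>0$ and $K\ge 1$, then $\psi(x)=\bo(x^{-\chi(a)})$. Indeed, setting $L:=K^{1/a}$ we have $\psi(y)\le\bigl((y/L)^{-a}\wedge 1\bigr)$ for all $y>0$, and since $\psi_{\ell_r(j)}\le\psi$, \cref{rec_char_fct} yields for every $r\in[m]$ and $x>0$
\begin{align*}
\psi_r(x)&\le\E\Bigl[\prod_{j\ge 1}\psi_{\ell_r(j)}(\alpha_{r,j}x)\Bigr]\le\E\Bigl[\prod_{j\ge 1}\bigl((\alpha_{r,j}(x/L))^{-a}\wedge 1\bigr)\Bigr]\\
&\le C_a\,(x/L)^{-\chi(a)}=C_aL^{\chi(a)}\,x^{-\chi(a)},
\end{align*}
the last step being \eqref{c7} with $\beta=a$ evaluated at the point $x/L>0$. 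Taking the maximum over $r$ gives $\psi(x)=\bo(x^{-\chi(a)})$, and (enlarging the constant exactly as in the first paragraph) $\psi(y)\le\min(1,K'y^{-\chi(a)})$ for all $y>0$ and some $K'\ge 1$, which is again in the form required to reapply the step.

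Finally I would iterate: starting from $a_0=\eta$, $a_{n+1}=\chi(a_n)$, we obtain $\psi(x)=\bo(x^{-\chi^{n}(\eta)})$ for every $n\in\N$; the multiplicative constants may worsen with $n$, but this is harmless since only finitely many iterations are ever invoked. Given $\beta>0$, the hypothesis $\limsup_{n\to\infty}\chi^{n}(\eta)=\infty$ supplies some $n$ with $\chi^{n}(\eta)>\beta$, and since $x^{-\chi^{n}(\eta)}\le x^{-\beta}$ for $x\ge 1$ we get $|\phi_r(t)|\le\psi(\|t\|)=\bo(\|t\|^{-\beta})$ for all $r\in[m]$, which is the first assertion. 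For the second assertion, fix $\gamma=(\gamma_1,\dots,\gamma_d)\in\N_0^d$; then $|x^{\gamma}\phi_r(x)|\le\|x\|^{\gamma_1+\dots+\gamma_d}|\phi_r(x)|=\bo(\|x\|^{-d-1})$ is Lebesgue-integrable on $\R^d$, so the Fourier-inversion argument recalled around \eqref{tail_phi_impl_density} shows that $X_r$ admits a bounded density $f_r$ all of whose derivatives $D^{\gamma}f_r$ exist and are continuous, i.e.\ $f_r\in\mathcal{C}^\infty(\R^d)$. There is no deep obstacle here; the single point needing care is exactly the one flagged above, namely converting the \emph{asymptotic} output $\psi(x)=\bo(x^{-a})$ of each step into a genuinely \emph{global} pointwise bound of the pure form $\bigl((y/L)^{-a}\wedge 1\bigr)$ before re-entering \cref{rec_char_fct}, so that the infinite product in \eqref{c7} does not generate an uncontrolled constant per factor. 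The rescaling identity achieves this while keeping all constants finite through the finitely many iterations, and everything else is routine.
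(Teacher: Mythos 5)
Your proposal is correct and follows essentially the same route as the paper: establish the initial polynomial bound via \cref{thm_poly_decay_p}, then iterate \cref{rec_char_fct} with \eqref{c7} (this is precisely the paper's \cref{lem_c7_p}, whose proof uses the same rescaling trick, encoded there in the constant $K_{\chi(\eta)}:=C_\eta K_\eta^{\chi(\eta)/\eta}$), and finally pass to a smooth density via Fourier inversion as recalled around \eqref{tail_phi_impl_density}. You simply spell out more explicitly the global pointwise upgrade $\psi(y)\le\min(1,Ky^{-a})$ and the identity $\min(1,Ky^{-a})=((y/K^{1/a})^{-a}\wedge 1)$ that the paper leaves implicit.
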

\begin{proof}
The bound on $|\phi_r(t)|$ follows from Proposition \ref{thm_poly_decay_p} and Lemma \ref{lem_c7_p} below. The second part holds by \eqref{tail_phi_impl_density}.
\end{proof}
\begin{lem}\label{lem_c7_p}
Assume \eqref{c7} and that $|\phi_r(t)|=\bo\left(\|t\|^{-\eta}\right)$ for some $\eta>0$ and all $r\in[m]$.
Then, for all $\beta < \limsup\limits_{n\rightarrow\infty} \chi^{n}(\eta)$,
\begin{align*}
|\phi_r(t)|=\bo\left(\|t\|^{-\beta}\right)\text{ for all $r\in[m]$ as } \|t\|\rightarrow\infty.
\end{align*}
\end{lem}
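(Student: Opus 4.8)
The plan is to iterate the functional inequality from \cref{rec_char_fct}. Set $\psi_r(x):=\sup_{\|t\|\ge x}|\phi_r(t)|$, so that \cref{rec_char_fct} gives $\psi_r(x)\le\E\big[\prod_{j\ge 1}\psi_{\ell_r(j)}(\alpha_{r,j}x)\big]$ for all $x\ge 0$ and $r\in[m]$. The first thing I would record is the elementary equivalence: for $a>0$, the statement ``$|\phi_r(t)|=\bo(\|t\|^{-a})$ for all $r\in[m]$'' is equivalent to the existence of a constant $K\ge 1$ with
\begin{align*}
\psi_r(y)\le (y/K)^{-a}\wedge 1\qquad\text{for all }y>0\text{ and all }r\in[m].
\end{align*}
Only one direction is not immediate; for it one combines $\psi_r\le 1$ with the polynomial decay valid for large argument, using finiteness of $m$ to choose $K$ uniformly in $r$. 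The point of this reformulation is that the multiplicative constant of the $\bo$-bound has been absorbed into a \emph{rescaling} $y\mapsto y/K$ of the argument rather than surviving as a factor; this is exactly what will make the infinite product below harmless.

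The core of the proof is the single bootstrap step: if $\psi_r(y)\le (y/K)^{-a}\wedge 1$ for all $y>0$ and $r$, then $\psi_r(x)=\bo(x^{-\chi(a)})$ for all $r$. Plugging the pointwise bound (with $y=\alpha_{r,j}x$) into the product and then invoking \eqref{c7} with $\beta=a$ at the point $x/K>0$,
\begin{align*}
\psi_r(x)\le\E\left[\prod_{j\ge 1}\psi_{\ell_r(j)}(\alpha_{r,j}x)\right]\le\E\left[\prod_{j\ge 1}\Big((\alpha_{r,j}(x/K))^{-a}\wedge 1\Big)\right]\le C_a\,(x/K)^{-\chi(a)},
\end{align*}
so $\psi_r(x)\le C_a K^{\chi(a)}x^{-\chi(a)}$. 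Since every factor $(\alpha_{r,j}(x/K))^{-a}\wedge 1$ lies in $[0,1]$, no constant accumulates over the (almost surely infinitely many) indices $j$, and the passage to the infinite product is already supplied by \cref{rec_char_fct}.

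Finally I would iterate the bootstrap step starting from the hypothesis $|\phi_r(t)|=\bo(\|t\|^{-\eta})$: this gives successively $\psi_r(x)=\bo(x^{-\chi(\eta)})$, $\psi_r(x)=\bo(x^{-\chi^2(\eta)})$, and by induction $\psi_r(x)=\bo(x^{-\chi^n(\eta)})$ for every $n\in\N$ and $r\in[m]$. Given $\beta<\limsup_{n\to\infty}\chi^n(\eta)$, there is some $n$ with $\chi^n(\eta)>\beta$, and then $|\phi_r(t)|\le\psi_r(\|t\|)=\bo(\|t\|^{-\chi^n(\eta)})=\bo(\|t\|^{-\beta})$, which is the claim. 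The only genuinely delicate point is the constant bookkeeping in the first two paragraphs: \eqref{c7} must be fed a scale-invariant bound so that $\prod_{j\ge1}\psi_{\ell_r(j)}(\alpha_{r,j}x)$ stays controlled under the expectation; the equivalence of the two forms of the tail bound and the extraction of $\beta$ from the $\limsup$ are routine.
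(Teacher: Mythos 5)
Your proposal is correct and follows essentially the same approach as the paper: normalize the polynomial tail bound so the constant is absorbed into a rescaling $x \mapsto x/K$, feed it through the inequality of \cref{rec_char_fct} together with \eqref{c7} to upgrade the decay exponent from $a$ to $\chi(a)$, and iterate. The paper's proof is just a terser version of the same bootstrap, with $K_{\chi(\eta)} = C_\eta K_\eta^{\chi(\eta)/\eta}$ playing the role of your rescaled constant.
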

\begin{proof}
Let $\psi_r(x)=\sup_{\|t\|\geq x} |\phi_r(t)|$ for $r\in [m]$. By assumption there is a constant $K_\eta>0$ such that $\psi_r(x)\leq K_\eta x^{-\eta} \wedge 1$. This implies in combination with \eqref{c7} and Lemma \ref{rec_char_fct} that  $\psi_r(x) \leq K_{\chi(\eta)} x^{-\chi(\eta)}$ with $K_{\chi(\eta)}:=C_{\eta} K_\eta^{\chi(\eta)/\eta}$. Iterating this bound yields the assertion.
\end{proof}

\begin{lem}\label{lem_Schwartz_p}
Let $X$ be a $\R^d$-valued random variable with characteristic function $\phi$. 
Assume $\E[\|X\|^p]<\infty$ for all $p>0$. Moreover, assume for all $\beta>0$ that
\begin{align}\label{eq_lem_Schwartz_p}
|\phi(t)|=\bo\left( \|t\|^{-\beta}\right)\text{ as } \|t\|\rightarrow\infty.
\end{align}
 Then $X$ admits a Schwartz density (see Definition \ref{defSchwartzdens}).
\end{lem}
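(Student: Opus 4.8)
The plan is to show that the characteristic function $\phi$ is itself a Schwartz function; the conclusion is then immediate, since the Fourier transform preserves the Schwartz space (\citep[Theorem 7.4(d)]{Ru91}, cf.~the equivalence \eqref{iff_Schwartz_space}), so that a Schwartz $\phi$ produces a Schwartz density for $X$ through the inversion formula \eqref{fourier_inv}. Thus everything reduces to checking that $\phi\in\mathcal{C}^\infty(\R^d)$ and that $\|t\|^\alpha\,|D^\gamma\phi(t)|$ is bounded on $\R^d$ for every $\alpha\in\N$ and $\gamma\in\N_0^d$.

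The first step is to obtain smoothness together with bounds on all derivatives. Since $\E[\|X\|^p]<\infty$ for every $p>0$, differentiating under the expectation (a routine dominated-convergence argument; cf.~\citep[Theorem 7.4(c)]{Ru91}) gives $\phi\in\mathcal{C}^\infty(\R^d)$ with
\begin{align*}
D^\gamma\phi(t)=i^{|\gamma|}\,\E\!\left[X^\gamma\mathrm{e}^{i\langle t,X\rangle}\right],\qquad X^\gamma:=X_1^{\gamma_1}\cdots X_d^{\gamma_d},\quad |\gamma|:=\textstyle\sum_{j=1}^d\gamma_j,
\end{align*}
and in particular $\|D^\gamma\phi\|_\infty\leq\E[\|X\|^{|\gamma|}]<\infty$, so every partial derivative of $\phi$ is bounded. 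It remains to transfer the rapid decay of $\phi$ from \eqref{eq_lem_Schwartz_p} to each $D^\gamma\phi$; below I call a function \emph{rapidly decreasing} if it is $\bo(\|t\|^{-\alpha})$ as $\|t\|\to\infty$ for every $\alpha>0$.

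The heart of the matter is an induction on $|\gamma|$ based on a Landau--Kolmogorov type interpolation inequality (this step is modelled on Fill and Janson \cite{fiJa00}). For $|\gamma|=0$ rapid decay is precisely \eqref{eq_lem_Schwartz_p}. For the inductive step, write $\gamma=\delta+e_i$ with $e_i$ the $i$-th standard basis vector and $|\delta|=|\gamma|-1$, and put $g:=D^\delta\phi$; then $g$ is rapidly decreasing by the inductive hypothesis, while $\partial_i^2 g=D^{\delta+2e_i}\phi$ is bounded, say by $M$. Fix $\alpha>0$ and, for $t$ with $\|t\|$ large, set $h:=\|t\|^{-\alpha/2}$, so that $h\leq\|t\|/2$ and hence $\|t+se_i\|\geq\|t\|/2$ for $|s|\leq h$; consequently $|g(t+se_i)|=\bo(\|t\|^{-\alpha})$ uniformly over $|s|\leq h$. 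A second-order Taylor expansion of $s\mapsto g(t+se_i)$ about $s=0$, evaluated at $s=\pm h$ and subtracted, then gives
\begin{align*}
|\partial_i g(t)|\leq\frac{|g(t+he_i)|+|g(t-he_i)|}{2h}+\frac{h}{2}\,M=\bo\!\left(\|t\|^{-\alpha/2}\right)\quad\text{as }\|t\|\to\infty.
\end{align*}
As $\alpha>0$ was arbitrary, $D^\gamma\phi=\partial_i g$ is rapidly decreasing, which completes the induction.

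Combining the two steps, $\phi$ is smooth and $\|t\|^\alpha\,|D^\gamma\phi(t)|$ is bounded for all $\alpha\in\N$, $\gamma\in\N_0^d$; since $|\Re z|,|\Im z|\leq|z|$, both $\Re\phi$ and $\Im\phi$ satisfy Definition \ref{def_Schwartz}, i.e.~$\phi$ is a Schwartz function, and the assertion follows. The one genuinely non-formal point is the interpolation estimate used in the induction; the remaining ingredients --- differentiating under the integral sign, Fourier inversion, and invariance of the Schwartz space under the Fourier transform --- are standard.
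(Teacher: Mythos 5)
Your proof is correct and follows the same route as the paper: smoothness and boundedness of all derivatives from the moment condition, then an induction transferring rapid decay of $\phi$ to every $D^\gamma\phi$ via a Landau--Kolmogorov type interpolation, mirroring the paper's use of its calculus lemma (\cref{calc_lemma}), itself borrowed from Fill and Janson. The only cosmetic difference is that the paper isolates the interpolation as a standalone lemma with the optimal step-size choice (yielding the bound $2\sqrt{ab}\|t\|^{-p/2}$), while you re-derive it inline with the fixed choice $h=\|t\|^{-\alpha/2}$, which is equally valid for the purpose at hand.
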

\begin{proof}
The proof is a straightforward generalization of well known arguments for $d=1$, see, e.g., Fill and Janson for Quicksort \citep{fiJa00}.
First recall that the class of Schwartz functions is preserved under Fourier transformation \eqref{iff_Schwartz_space}. Thus, it is sufficient to show that
$\phi$ is a Schwartz function. 
Note that for all $(\beta_1,\ldots,\beta_d)\in\N_0^d$
\begin{align}\label{pf_main_thm_1}
\left| \frac{\partial^{\beta_1}}{\partial t^{\beta_1}}\cdots \frac{\partial^{\beta_d}}{\partial t^{\beta_d}} \exp\left(i \langle t , X\rangle)\right)\right| \leq \|X\|^{\beta_1+\cdots +\beta_d}
\end{align}
A standard argument based on the Dominated Convergence Theorem reveals that $D^\beta \phi$ exists and is given by
\begin{align}\label{pf_main_thm_2}
D^\beta\phi(t)= \E\left[\frac{\partial^{\beta_1}}{\partial t^{\beta_1}}\cdots \frac{\partial^{\beta_d}}{\partial t^{\beta_d}} \exp\left(i \langle t , X\rangle)\right)\right].
\end{align}
In remains to find constants $d_{\alpha,\beta}>0$ for every $\alpha\in\N_0$ and $\beta=(\beta_1,\ldots,\beta_d)\in\N_0^d$ such that  for all $t\neq 0$
\begin{align}\label{zz_pf_main_last}
 |D^\beta \phi(t)|\leq d_{\alpha,\beta} (\|t\|)^{-\alpha}.
\end{align}
Equations \eqref{pf_main_thm_1} and \eqref{pf_main_thm_2} imply that \eqref{zz_pf_main_last} holds for $d_{0,\beta}=\E[\|X\|^{\beta_1+\cdots +\beta_d}]$ if $\alpha=0$. Moreover, \eqref{eq_lem_Schwartz_p} implies the existence of constants $d_{\alpha,{\bf 0}}$ such that
\eqref{zz_pf_main_last} holds if $\beta={\bf 0}:=(0,\ldots,0)\in\N_0^d$.
 As in \citep[Theorem 2.9]{fiJa00} the remaining cases follow from these cases and 
the next calculus lemma which is a straightforward generalization of the corresponding lemma by Fill and Janson \citep[Lemma 2.10]{fiJa00}.
\end{proof}

\begin{lem}\label{calc_lemma} 
 Let $g:\R^d\rightarrow \C$ be a function such that the partial derivatives $\frac{\partial}{\partial t_j} g$ and $\frac{\partial^2}{\partial t_j^2}g$ exist for some $j\in[d]$. Assume
that $|g(t)|\leq a \|t\|^{-p}$ and $|\frac{\partial^2}{\partial t_j^2} g (t) | \leq b$ for some constants $a,b>0$, $p\geq 0$, and all $t\neq {\bf 0}$. Then $|\frac{\partial}{\partial t_j} g(t)|\leq 2\sqrt{ab} \|t\|^{-p/2}$.
\end{lem}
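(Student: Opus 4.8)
The plan is to reduce \cref{calc_lemma} to a one--variable estimate along the $j$-th coordinate axis, chosen so that moving along it does not decrease $\|t\|$, and then to combine a second--order Taylor expansion with the two given bounds and optimise over the step size.

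Fix $t\in\R^d$ with $t\neq{\bf 0}$, write $e_j$ for the $j$-th standard basis vector, and choose a sign $\sigma\in\{+1,-1\}$ with $\sigma t_j\geq 0$ (either choice if $t_j=0$). Set $h(u):=g(t+\sigma u e_j)$ for $u\in\R$. By the hypotheses, $h'(u)=\sigma\,(\frac{\partial}{\partial t_j}g)(t+\sigma u e_j)$ and $h''(u)=(\frac{\partial^2}{\partial t_j^2}g)(t+\sigma u e_j)$ exist for every $u$, with $|h''(u)|\leq b$. Applying this bound to $\Re h''$ and $\Im h''$ shows $h'$ is Lipschitz, hence absolutely continuous, so the fundamental theorem of calculus gives $h'(s)=h'(0)+\int_0^s h''(\tau)\,\mathrm{d}\tau$ and, after one more integration and Fubini,
\begin{equation*}
h(\delta)=h(0)+\delta\,h'(0)+\int_0^\delta (\delta-s)\,h''(s)\,\mathrm{d}s \qquad (\delta\geq 0),
\end{equation*}
which is valid for the $\C$-valued $h$ as well (apply it componentwise, or use $\left|\int_0^\delta(\delta-s)h''(s)\,\mathrm{d}s\right|\leq\int_0^\delta(\delta-s)|h''(s)|\,\mathrm{d}s$).

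The key geometric observation is that for $u\geq 0$,
\begin{equation*}
\|t+\sigma u e_j\|^2=\|t\|^2+2\sigma u\,t_j+u^2\geq\|t\|^2,
\end{equation*}
so $|h(u)|\leq a\,\|t+\sigma u e_j\|^{-p}\leq a\,\|t\|^{-p}$ for all $u\geq 0$. Inserting this together with $|h''|\leq b$ into the Taylor formula yields, for every $\delta>0$,
\begin{equation*}
\delta\,|h'(0)|\leq |h(\delta)|+|h(0)|+\tfrac{b}{2}\,\delta^2\leq 2a\,\|t\|^{-p}+\tfrac{b}{2}\,\delta^2,
\end{equation*}
i.e.\ $|h'(0)|\leq 2a\,\|t\|^{-p}\delta^{-1}+\tfrac{b}{2}\delta$. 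Choosing $\delta=2\sqrt{a/b}\;\|t\|^{-p/2}$ makes the two terms equal and gives $|h'(0)|\leq 2\sqrt{ab}\;\|t\|^{-p/2}$; since $|h'(0)|=|(\frac{\partial}{\partial t_j}g)(t)|$, this is exactly the assertion.

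I do not anticipate a genuine obstacle. The one point that must not be overlooked is the choice of perturbation direction: moving in the "wrong" direction could push $t+\sigma u e_j$ through the origin and make the bound $|g|\leq a\|\cdot\|^{-p}$ useless, whereas the choice $\sigma t_j\geq 0$ guarantees $\|t+\sigma u e_j\|\geq\|t\|$ along the whole ray $u\geq 0$. The remaining ingredient is the minor regularity bookkeeping needed to justify the Taylor expansion when only the first two partials of $g$ in the $j$-th variable — rather than full differentiability of $g$ — are assumed, which follows since a pointwise derivative bounded by $b$ forces the Lipschitz, hence absolute continuity, property used above. As the excerpt notes, this is the straightforward $d$-dimensional analogue of \citep[Lemma 2.10]{fiJa00}.
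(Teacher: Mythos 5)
Your proof is correct and follows essentially the same strategy as the paper's: restrict $g$ to a ray through $t$ in the $e_j$ direction chosen so that $\|\cdot\|$ does not decrease, combine the pointwise bound $|g|\leq a\|t\|^{-p}$ with the second-derivative bound to obtain the quadratic inequality $\delta\,|h'(0)|\leq 2a\|t\|^{-p}+\tfrac{b}{2}\delta^2$, and optimise over the step size $\delta$. The only cosmetic difference is that you package the middle step as a Taylor expansion with integral remainder plus the triangle inequality, whereas the paper integrates $h'$ and uses a rotation-by-$\arg h'(t_j)$ argument together with the Mean Value Theorem to reach the same quadratic bound.
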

\begin{proof} We present the full proof to keep the paper self-contained, although we only need to make minor adjustments to the proof of Fill and Janson \citep[Lemma 2.10]{fiJa00}. Note that also $|\frac{\partial^2}{\partial t_j^2} g ({\bf 0})|\leq b$ by Darboux's Theorem, even though the assumption is only stated for $t\neq {\bf 0}$. 

 Fix $t=(t_1,\ldots,t_d)\in\R^d\setminus\{0\}$ and $j\in[d]$. Let $h(x):=g(t_1,\ldots,t_{j-1},x,t_{j+1},\ldots,t_d)$ for $x\in\R$. First consider the case $t_j\geq 0$: Note that for any $y>t_j$
 \begin{align}\label{pf_calc_lem_a}
 \left| \int_{t_j}^y h^\prime(x)\mathrm{d} x\right|= \left| h(y)-h(t_j)\right|\leq |h(y)|+|h(t_j)|\leq 2a \|t\|^{-p},
 \end{align}
 in which the last inequality holds by assumption and by $y>t_j\geq 0$.
 
On the other hand, observe that for $\theta=\mathrm{arg} (h^\prime(t_j))$ and every $x>t_j$:
\begin{align}
\Re\left(\mathrm{e}^{-i\theta} h^\prime (x)\right)
&=\Re\left(\mathrm{e}^{-i\theta} h^\prime (t_j)\right)-\Re\left(\mathrm{e}^{-i\theta} (h^\prime(t_j)-h^\prime (x))\right)\notag\\
&\geq |h^\prime (t_j)|-b (x-t_j),
\end{align}
in which the last inequality holds by the choice of $\theta$, the Mean Values Theorem, and by the bound $|h^{\prime\prime}|\leq b$. Thus, for any $y>t_j$
\begin{align*}
\left| \int_{t_j}^y h^\prime(x)\mathrm{d} x\right|
&=\left| \int_{t_j}^y \mathrm{e}^{-i\theta} h^\prime(x)\mathrm{d} x\right|\\
&\geq \int_{t_j}^y |h^\prime (t_j)|-b (x-t_j) \mathrm{d} x
\\
&=(y-t_j) |h^\prime (t_j)| - \frac {b} 2 (y-t_j)^2.
\end{align*}
Combined with \eqref{pf_calc_lem_a} and the choice $y:=t_j+|h^\prime(t_j)|/b$, this implies $|h^\prime(t_j)|\leq 2 \sqrt{ab} \|t\|^{-p/2}$ as claimed. For $t_j<0$ consider $\tilde{h}(x):=h(-x)$ instead and apply the same bounds.\end{proof}
Finally \cref{coro_smooth_density_p} and \cref{lem_Schwartz_p} imply the main theorem:
\begin{proof}[Proof of \cref{thm_A_conds}] By \cref{coro_smooth_density_p} and \cref{lem_Schwartz_p} it is sufficient to prove \eqref{c1p}-\eqref{c7p} with parameters $\eta>0$ and $\chi:(0,\infty)\rightarrow(0,\infty)$ such that 
\begin{align*}
\lim_{n\rightarrow\infty} \chi^n(\eta) =\infty.
\end{align*}
Recall the assumptions
 \eqref{s_cond_2}-\eqref{s_cond_6}, that is for all $r\in[m]$ and $j\geq 1$:
\begin{flalign}
&\Prob(\amax_r \geq a)=1\quad \text{for some constant $a>0$,}&\label{s_cond_2_p}\tag{A1}\\
&\Prob(\asec_r\leq x) \leq \lambda x^{\nu}\quad \text{ for some $\lambda,\nu>0$ and all $x>0$,}&\label{s_cond_3_p}\tag{A2}\\
&\Prob(\|A_{r,j}^T\|_{\mathrm{op}}\leq 1)=1,&\label{s_cond_4_p}\tag{A3}\\
&\Supp(X_r)\text{ is in general position (see Definition \ref{defSuppGenPos}), }&\label{s_cond_5_p} \tag{A4}\\
&\Prob(N_r( \mathcal{I})\geq 1)>0\text{ for }\mathcal{I}:=(0,1)\subset\R.\tag{A5}&\label{s_cond_6_p}
\end{flalign}
Note that \eqref{s_cond_3_p} in particular implies
\begin{align}\label{asec_0}
 \Prob(\asec_r>0)=1,\quad r\in [m].
 \end{align}
Condition \eqref{c1p} holds by \eqref{s_cond_2_p}. Moreover, \eqref{s_cond_4_p} an \eqref{asec_0} imply $\Prob(N_r((0,1])\geq 2)=1$ and thus \eqref{c2p}. 

 For Condition \eqref{c3p} recall that $\phi_r$ denotes the characteristic function of $X_r$. Note that $\Prob(\langle s, X_r \rangle \in \Z +c)=1$ implies $|\phi_r(2\pi s)|=1$. Hence, it is sufficient to show 
\begin{align}\label{zz_c3} 
 |\phi_r(t)|<1\quad\text{ for all $t\neq {\bf 0}$.}
 \end{align}
First note that \eqref{s_cond_5_p} implies the existence of $\varepsilon_r>0$ such that
 \begin{align}\label{less_1_bound_phi}
 |\phi_r(t)|<1\text{ for all $t\in\R^2$ with }0<\|t\|<\varepsilon_r.
 \end{align}
Details on how \eqref{s_cond_5_p} implies \eqref{less_1_bound_phi}
are stated after the proof (see \cref{lem_phi_less_1}). 
Now let 
\begin{align*}
g_r(x):=\sup_{t:\|t\|=x} |\phi_r(t)|,\qquad g(x):=\max_{r\in[m]} g_r(x),\quad x>0.
\end{align*}
and note that the continuity of $\phi_r$ implies that $g_r(x)=|\phi_r(t_{r,x})|$ for some $t_{r,x}$ with $\|t_{r,x}\|=x$. In particular, by \eqref{less_1_bound_phi}, 
\begin{align*}
g(x)<1\text{ for }x\in(0,\varepsilon),\quad \varepsilon:=\min_{r\in[m]} \varepsilon_r.
\end{align*}
Now suppose \eqref{zz_c3} is false for some $r\in [m]$. Then $g(x)=1$ for some $x\geq \varepsilon$. 
Since $g$ is continuous, the minimum $x_0=\min\{x\geq \varepsilon : g(x)=1 \}$ is attained. Thus we have
\begin{align*}
g(x_0)=1,\qquad g(x)<1\text{ for all }x\in (0,x_0).
\end{align*}
Now choose $r_0\in [m]$ and $t_0\in\R^d$, $\|t_0\|=x_0$, such that
$g(x_0)=|\phi_{r_0}(t_0)|$. Then, by \cref{rec_char_fct},
\begin{align*}
1=|\phi_{r_0}(t_0)|\leq \E\left[\prod_{j=1}^\infty \left| \phi_{\ell_{r_0}(j)} \left(A_{r_0,j}^T t_0\right) \right|\right].
\end{align*}
Hence,
\begin{align*}
\left| \phi_{\ell_{r_0}(j)} \left(A_{r_0,j}^T t_0\right) \right| =1 \text{ a.s. for all }j\geq 1,
\end{align*}
which requires $\|A_{r_0,j}^T t_0\|\notin (0,x_0)$. However, this contradicts \eqref{s_cond_6_p} since
\begin{align*}
\alpha_{r_0,j} x_0 \leq \|A_{r_0,j}^T t_0\|\leq \|A_{r_0,j}^T\|_{\mathrm{op}} x_0.
\end{align*}
Thus $g(x)<1$ for all $x>0$, which yields \eqref{c3p}.

Conditions \eqref{c4p} follows from \eqref{s_cond_3_p} for $\eta<\nu$. Moreover, \eqref{c5p} follows from \eqref{s_cond_2_p}. Condition \eqref{c6p} holds by \eqref{asec_0}.
For Condition \eqref{c7} note that
\begin{align*}
 \E\left[ \prod_{j=1}^{\infty} \left( \alpha_{r,j}^{-\beta} x^{-\beta} \wedge 1\right) \right]
 &\stackrel{\eqref{s_cond_2}}{\leq} 
  a^{-\beta} x^{-\beta} \E[ (\asec_r x)^{-\beta}\wedge 1]\\
  &  \leq a^{-\beta} x^{-\beta}\left( x^{-\beta /2 } + \Prob\left(\asec_r \leq x^{-1/2}\right)\right),
\end{align*}
which yields \eqref{c7} for $\chi(\beta):=\beta+ (\beta\wedge \nu)/2$ by \eqref{s_cond_3}. In particular, $\chi^{n}(\eta)\rightarrow\infty$ for any $\eta>0$ as $n\rightarrow\infty$. Therefore
\cref{coro_smooth_density} yields the existence of bounded densities $f_1,\ldots,f_m\in\mathcal{C}^\infty(\R^d)$ and \cref{lem_Schwartz} implies that $f_r$ is a Schwartz function if all moments of $X_r$ exist. 
\end{proof}

We end the proof section with the missing lemma for \eqref{c3p}. This lemma is a generalization of a standard result for real-valued random variables \cite[XV.1 Lemma 4]{fe71}:
\begin{lem}\label{lem_phi_less_1} Let $X$ be a $\R^d$-valued random variable with characteristic function $\phi$. Let $\mathcal{D}_c^d:=\{v\in\R^d : \|v\|\leq c\}$ for $c>0$. Assume that $\Supp(X)$ is in general position. Then there is an $\varepsilon>0$ such that
\begin{align*}
|\phi( t ) |<1\quad \text{ for all } t\in\mathcal{D}_\varepsilon^d.
\end{align*}
\end{lem}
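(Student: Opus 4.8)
The plan is to argue by contradiction, reducing to the classical one-dimensional fact. (Note that $\phi({\bf 0})=1$, so the statement is to be understood for $t\neq{\bf 0}$, as it is used in the proof of \cref{thm_A_conds}.) Suppose no such $\varepsilon>0$ exists. Then for every $n\in\N$ there is a vector $t_n\in\R^d$ with $0<\|t_n\|\leq 1/n$ and $|\phi(t_n)|\geq 1$, hence $|\phi(t_n)|=1$ since $|\phi|\leq 1$ everywhere. In particular $t_n\to{\bf 0}$, and the goal is to show that $t_n$ must eventually equal ${\bf 0}$, a contradiction.

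First I would convert each equality $|\phi(t_n)|=1$ into a lattice constraint on $X$. Write $\phi(t_n)=\mathrm{e}^{i\beta_n}$ for some $\beta_n\in\R$ and apply \cref{lem_C_leq1} to the $\C$-valued random variable $Z_n:=\exp\left(i(\langle t_n,X\rangle-\beta_n)\right)$, which satisfies $|Z_n|\leq 1$ and $\E[Z_n]=1$; it follows that $Z_n=1$ almost surely, i.e.\ $\langle t_n,X\rangle\in\beta_n+2\pi\Z$ almost surely. Because $\beta_n+2\pi\Z$ is closed in $\R$ and $x\mapsto\langle t_n,x\rangle$ is continuous, this forces $\langle t_n,x\rangle\in\beta_n+2\pi\Z$ for every $x\in\Supp(X)$.

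Next I would exploit the general-position hypothesis. Choose $x_0,x_1,\ldots,x_d\in\Supp(X)$ so that the vectors $y_k:=x_k-x_0$, $k\in[d]$, are linearly independent, and let $M$ be the invertible $d\times d$ matrix with $k$-th row $y_k^{T}$. Subtracting the memberships $\langle t_n,x_k\rangle\in\beta_n+2\pi\Z$ and $\langle t_n,x_0\rangle\in\beta_n+2\pi\Z$ gives $\langle t_n,y_k\rangle\in 2\pi\Z$ for all $k$, i.e.\ $Mt_n\in 2\pi\Z^d$. On the other hand $Mt_n\to{\bf 0}$ as $n\to\infty$ since $t_n\to{\bf 0}$ and $M$ is bounded; as $2\pi\Z^d$ is discrete, this yields $Mt_n={\bf 0}$ for all large $n$, hence $t_n={\bf 0}$, contradicting $t_n\neq{\bf 0}$.

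The argument is short once the right reformulation is found; there is no real obstacle, only two steps requiring a little care: passing from the almost-sure lattice identity to one valid on all of $\Supp(X)$ (via closedness of $\beta_n+2\pi\Z$), and tracking the $n$-dependent phase $\beta_n$, which is harmless since it cancels after taking the differences $x_k-x_0$. For $d=1$ this recovers \cite[XV.1 Lemma 4]{fe71}.
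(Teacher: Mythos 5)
Your proof is correct and follows essentially the same route as the paper's: both extract a lattice constraint from $|\phi(t_n)|=1$ (via \cref{lem_C_leq1}), pass it to $\Supp(X)$, and use the general-position points $x_0,\ldots,x_d$ to reach a contradiction with $t_n\to{\bf 0}$. Your packaging via the invertible matrix $M$ and discreteness of $2\pi\Z^d$ is a touch cleaner than the paper's version, which normalizes $t_n=c_n\alpha_n$, picks a single coordinate $j$ by pigeonhole, and contradicts the Cauchy--Schwarz bound $|\langle\alpha_{n_k},x_j-x_0\rangle|\leq\|x_j-x_0\|$; you are also more explicit than the paper about justifying the step from the a.s.\ lattice identity to one holding pointwise on $\Supp(X)$, and about excluding $t={\bf 0}$ from the conclusion.
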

\begin{proof} Suppose for the sake of contradiction that there is a sequence $(t_n)_{n\geq 1}$ in $\mathcal{D}_\varepsilon^d$ with
\begin{align}\label{lem510_assump_1}
|\phi(t_n)|=1\text{ for all } n\geq 1\quad \text{and} \quad \|t_n\|\rightarrow 0.
\end{align}
Let $c_n=\|t_n\|$ and $\alpha_n=t_n/c_n$. Then \eqref{lem510_assump_1} is equivalent to
\begin{align*}
(a)\quad |\phi(c_n \alpha_n) | =1\text{ for all }n\in\N,\qquad (b)\quad \lim_{n\rightarrow\infty} c_n =0.
\end{align*}
First note that $(a)$ implies that $\phi(c_n\alpha_n)=\exp(i\theta_n)$ for some $\theta_n\in [0,2\pi)$ and therefore
\begin{align}\label{as_lattice_n}
c_n\langle \alpha_n, X\rangle \in 2\pi \Z + \theta_n \quad \text{ a.s.}
\end{align}
Now let $x_0,\ldots,x_d\in\Supp(X)$ be points in general position. Since $x_1-x_0,\ldots,x_d-x_0$ is a basis of $\R^d$, every $\alpha\neq {\bf 0}$ has a $j\in [d]$ such that 
 $\langle \alpha ,x_j-x_0\rangle \neq 0$. In particular, there is a  $j\in[d]$ such that $\langle \alpha_{n},x_j-x_0 \rangle\neq 0$ for infinitely many $n$. Now let $(n_k)_{k\geq 1}$ be a sequence in $\N$ with $n_k\rightarrow\infty$ such that $\langle \alpha_{n_k},x_j-x_0 \rangle\neq 0$ for all $k\geq 1$. Then, by \eqref{as_lattice_n},
\begin{align*}
|\langle \alpha_{n_k},x_j-x_0 \rangle |\geq 2\pi c_{n_k}^{-1} \longrightarrow \infty\quad \text{ as  }k\rightarrow\infty,
\end{align*}
which is a contradiction to $|\langle \alpha_{n_k},x_j-x_0 \rangle |\leq \|\alpha_{n_k}\| \|x_j-x_0\|$ and $\|\alpha_{n_k}\|=1$.
\end{proof}

\section{Conclusion and Remarks}\label{sec:conclusion}
We have seen sufficient conditions for solutions to \eqref{main_rec} that imply the existence of smooth densities. In addition we have seen that the additional assumption of finite moments of any order leads to Schwartz densities for these solutions.

We do not claim that these conditions are sharp in any sense. 
In fact, \eqref{s_cond_2}-\eqref{s_cond_6} are stated for convenience, whereas the slightly weaker conditions \eqref{c1}-\eqref{c7} are  sufficient for the main results. The
 following observations below give some insight on why conditions on $\amax_r$ and $\asec_r$ are reasonable.

{\bf Condition \eqref{s_cond_2}.} In order to see why some kind of lower bound on the largest coefficient in \eqref{main_rec} is required, note that in large continuous time P\'olya urns (with two colors) the following type of distributional equation arises \cite[Proposition 4.2]{ChPouSah11}:
\begin{align*}
X_1&\stackrel d = \mathrm{e}^{-(a-c)\tau}\left(\sum_{j=1}^{a+1} X_1^{(j)} +\sum_{j=1}^b X_2^{(j)}\right),\\
X_2&\stackrel d = \mathrm{e}^{-(a-c)\tau}\left(\sum_{j=1}^{c} X_1^{(j)} +\sum_{j=1}^{d+1} X_2^{(j)}\right),
\end{align*}
with the usual independence assumptions, a standard exponentially distributed $\tau$, and  $a,b,c,d\in\N$.
The system of equations above satisfies \eqref{s_cond_3}-\eqref{s_cond_6} (the solutions are non-degenerate by \cite[Proposition 7.1]{ChPouSah11}). 
It is known \cite[Proposition 7.2]{ChPouSah11} that the limit has a density which explodes at $0$ and thus is not continuous (the densities are infinitely differentiable on $\R\setminus\{0\}$, however). In particular we need to exclude such equations since our methods can only provide continuous densities.

{\bf Condition \eqref{s_cond_3}.} Why do we need lower bounds on the second largest coefficient in \eqref{main_rec}?
The extremal case with only one non-zero coefficient in \eqref{main_rec} is usually called a perpetuity. More precisely, $X$ is called a perpetuity if it satisfies
\begin{align*}
X\stackrel{d} = A X +b
\end{align*}
where $(A,b)$ is independent of $X$. A trivial example for a perpetuity  is a uniformly on $[0,1]$ distributed random variable $U$: 
\begin{align*}
U\stackrel d = \frac 1 2 U + \frac B 2,
\end{align*}
where $B$ and $U$ are independent, and $\Prob(B=0)=\Prob(B=1)=1/2$.
Note that  \eqref{s_cond_2} and \eqref{s_cond_4}-\eqref{s_cond_6} hold in this case. However, the density of $U$ is discontinuous at $0$ and $1$.

{\bf Condition \eqref{s_cond_4}.} Bounding every coefficient in \eqref{main_rec} by $1$ is a convenient condition to deduce \eqref{c1}-\eqref{c7} later on. It is clearly not necessary, but it holds in all 'typical' applications (e.g.~the ones in \cref{sec:motivation}).

{\bf Condition \eqref{s_cond_5}.} Obviously \eqref{s_cond_5} is a necessary condition to obtain a density for $X_r$ (if the condition is violated, then $\Supp(X_r-x)$, $x\in\Supp(X_r)$, is contained in a $d-1$-dimensional subspace of $\R^d$). However, this condition cannot be solely deduced from the coefficients, as already discussed in \cref{rem_s_cond_5}: Many one dimensional examples for \eqref{main_rec} also have a trivial (deterministic) solution, thus we need to assume that the solution is non-degenerate. Also in higher dimensions there are trivial examples where \eqref{s_cond_5} cannot be deduced from the coefficients: Consider, e.g.~the vector $(X,X)^T$ where $X$ is a solution to a one dimensional distributional equation that satisfies \eqref{s_cond_2}-\eqref{s_cond_6}. Then $(X,X)^T$ solves a two dimensional distributional equation with the same coefficients as for $X$. Thus all conditions except \eqref{s_cond_5} still hold for $(X,X)^T$ but $(X,X)^T$ clearly has no density on $\R^2$.

{\bf Condition \eqref{s_cond_6}.} The last condition is made having the 'typical' applications in mind. Like \eqref{s_cond_4}, this condition is convenient to deduce \eqref{c1}-\eqref{c7} later on. It is no real restriction to most applications arising in the contraction method: 
In order for the limit map to be a contraction, one needs to make an assumption like $\sum_j \E[\|A_{r,j}\|_{\mathrm{op}}^p]^{1/p}<1$ for some $p\geq 1$. In particular, a requirement for the contraction method to work is
	\begin{align}\label{conclude_contraction}
\Prob\left(\bigcap_{j\geq 1}\left\{ \|A_{r,j}\|_{\mathrm{op}}<1\right\}\right)>0.
\end{align}
Since \eqref{s_cond_2} and \eqref{conclude_contraction} imply \eqref{s_cond_6}, no additional restrictions to 'typical' applications are made by \eqref{s_cond_6}.

{\bf Some further directions and open problems.}
It remains an interesting open problem to derive other characteristics of $\mathcal{L}(X_r)$ from \eqref{main_rec}. In particular, as an addition to this article, conditions on \eqref{main_rec} that imply finiteness of all moments are of special interest. If $m=1$ and $d=1$, R\"osler \citep[Theorem 6]{Roe92} presents an approach (if \eqref{main_rec} is a fixed point equation of a contraction mapping) that leads to $\E[\exp(\lambda X_r)]<\infty$ for all $\lambda$ in some open neighborhood of $0$. A generalization of this approach could lead to conditions in the general case ($m,d\geq 1$) for finite exponential moments. 

Moreover, bounds on $\Prob(X_r\geq x)$ and $\Prob(X_r\leq -x)$ for (large) $x>0$ are desirable, especially for applications in computer science. Once again, a generalization of R\"osler's approach in combination with Markov's inequality could be used to get exponential tail bounds in some applications.
 However, at least for Quicksort the tails of the limit distribution decay even faster:

Let $X$ be the Quicksort limit in \eqref{DE_Quicksort}.
 Knessl and Szpankowski \citep{KneSzpa99} have found with non-rigorous methods (based on several unproven assumptions) some constants $c_1,x_2,\gamma>0$ such that, as $x\rightarrow\infty$,
\begin{align*}
\Prob(X\leq -x)\sim c_1\exp\left(-c_2\exp(\gamma x)\right),\qquad \Prob(X\geq x)=\exp\left(-x\ln x - x \ln\ln x +\bo(x)\right).
\end{align*}
Janson \citep{Ja15} showed $\Prob(X\leq -x)\leq \exp(-x^2/5)$ and $\Prob(X\geq x)\leq \exp(-x\ln x +(1+\ln 2) x)$ rigorously for all sufficiently large $x$. \\

\noindent
{\bf Acknowledgements.} The author thanks Ralph Neininger and Henning Sulzbach for valuable comments on the topic and related literature. The author also thanks the two unknown referees for their careful reading and valuable remarks.

\bibliographystyle{plainnat}
\bibliography{library}{}

\end{document}